\pgfplotsset{compat=1.15}
\theoremstyle{plain}
\newtheorem{theorem}{Theorem}[section]
\newtheorem*{theorem*}{Theorem}
\newtheorem*{conj*}{Conjecture}
\newtheorem{lemma}[theorem]{Lemma}
\newtheorem{prop}[theorem]{Proposition}
\newtheorem{cor}[theorem]{Corollary}
\newtheorem{thmx}{Theorem}
\theoremstyle{definition}
\newtheorem{definition}[theorem]{Definition}
\newtheorem{rem}[theorem]{Remark}
\newtheorem{ex}[theorem]{Example}
\theoremstyle{remark}
\numberwithin{equation}{section}
\numberwithin{theorem}{section}
\numberwithin{table}{section}
\numberwithin{figure}{section}
\newcommand{\bC}{\mathbb{C}}
\newcommand{\setRS}{\widehat{\mathbb{C}}}
\newcommand{\bR}{\mathbb{R}}
\newcommand{\bN}{\mathbb{N}}
\newcommand{\deltacrit}{\delta_{\operatorname{crit}}}
\newcommand{\sing}{{\operatorname{Sing}_F}}
\renewcommand{\:}{\colon}
\newcommand{\att}{\operatorname{att}}
\renewcommand{\leq}{\leqslant}
\renewcommand{\geq}{\geqslant}
\numberwithin{equation}{section}
\title
{Conformal measures of (anti)holomorphic correspondences}
\author[N.~Hemmingsson]{Nils Hemmingsson}
\address{Nils~Hemmingsson, Department of Mathematics, Stockholm University, SE-106 91 Stockholm,
      Sweden}
\email{nils.hemmingsson@math.su.se}
\author[X.~Li]{Xiaoran Li}\address{Xiaoran~Li, School of Mathematical Sciences, Peking University, Beijing 100871, China.}
\email{2000010758@alumni.pku.edu.cn}
\author[Z.~Li]{Zhiqiang Li}\address{Zhiqiang~Li, School of Mathematical Sciences \& Beijing International Center for Mathematical Research, Peking University, Beijing 100871, China.}
\email{zli@math.pku.edu.cn}
\begin{document}
\keywords{holomorphic correspondence, antiholomorphic correspondence, conformal measure, relative hyperbolicity}
\subjclass[2020]{Primary 37F05; Secondary 37A05, 54C60, 37D25.}

\begin{abstract}
    In this paper, we study the existence and other properties of conformal measures on limit sets of (anti)holomorphic
    correspondences. We show that if the critical exponent satisfies $1\leq \deltacrit(x) <+\infty,$ the correspondence $F$ is relatively hyperbolic on the limit set $\Lambda_+(x)$, and $\Lambda_+(x)$ is minimal, then $\Lambda_+(x)$ admits a non-atomic conformal measure for $F$ and the Hausdorff dimension of $\Lambda_+(x)$ is strictly less than $2$. As a special case, this shows that for a parameter $a$ in the interior of a hyperbolic component of the modular Mandelbrot set, the limit set of the Bullett--Penrose correspondence $F_a$ has a non-atomic conformal measure and its Hausdorff dimension is strictly less than $2$. The same results hold for the LLMM correspondences, under some extra assumptions on their defining function $f$.
\end{abstract}
\maketitle
\tableofcontents
\section{Introduction}\label{sec:introduction}

A holomorphic correspondence $F\:z\to w$ is a multivalued map on the Riemann sphere $\setRS$ defined by a polynomial relation in two variables $P(z,w)=0$. In the same way, an antiholomorphic correspondence is defined by the polynomial relation $P(\overline{z},w)=0$. Whenever we write ``(anti)holomorphic correspondence'', we refer either a holomorphic or antiholomorphic correspondence. (Anti)holomorphic correspondences may be seen as generalizations of rational maps and finitely generated Kleinian groups as follows. If $R=\frac{p(z)}{q(z)}$ is a rational map, then the correspondence defined by the polynomial relation
\begin{equation*}
	P(z,w)=p(z)-wq(z)=0
\end{equation*}
is readily seen to represent the map $R$. Similarly, if $G$ is a finitely generated Kleinian group, generated by the M\"obius transformations
\begin{equation*}
	\gamma_i=\frac{a_iz+b_i}{c_iz+d_i},\qquad  i\in \{1,\, \dots,\, n\},
\end{equation*}
then the $n$-to-$n$ correspondence defined by 
\begin{equation*}
	P(z,w)=\prod_{i=1}^n(a_iz+b_i-w(c_iz+d_i))=0
\end{equation*}
has the same full orbits as that of the group $G$ acting on $\setRS$. 

In 1922, Fatou \cite{fatou1922iteration}  initiated the study of holomorphic correspondences and noted that there are many similarities but also dissimilarities between iterations of general holomorphic correspondences and those of rational functions or finitely generated Kleinian groups. Nonetheless, in \cite{Fatou1929}, he put forth the idea that the two latter areas could be studied through the more general lens of holomorphic correspondences. 

The general theory of holomorphic correspondences is still quite limited, but some very interesting examples are known. 
For instance, in \cite{Bullett1994MatingQM}, the following family of 2-to-2 correspondences with a remarkable property was introduced.
Let $F_a\:z\to w$ be the correspondence defined by
\begin{equation*}
	\biggl(\frac{aw-1}{w-1}\biggr)^2+\frac{(aw-1)}{(w-1)}\frac{(az+1)}{(z+1)}+\biggl(\frac{az+1}{z+1}\biggr)^2=3.
\end{equation*}
It is shown that certain members of this family are \emph{matings} between a quadratic polynomial and $\operatorname{PSL}(2,\mathbb Z)$.
A correspondence $F\:\setRS \to \setRS$ is, in colloquial terms, a mating between a rational map $R$ and a group $G$ if there exists a partition of $\setRS$ into disjoint and under $F$ completely invariant sets $A$ and $B$, such that the following holds. On the set $A$, the correspondence is equivalent to the action of $G$ and on certain subsets $B_i$ of $B$ and suitable forward or backward branches $f_i$ of $F$, $B_i$ is $f_i$-invariant and $f_i$ is conjugate to $R$ on $B_i$. 
The precise definition of matings may vary and depend on the conjugation, but the ones most closely related to this paper may be found in \cite{bullett2017mating} or \cite{lyubich2023antiholomorphic}. In \cite{Bullett1994MatingQM} they further conjectured that for all parameters $a$ in the \emph{modular Mandelbrot set}, the correspondence $F_a$ is a mating between a quadratic polynomial with connected Julia set and $\operatorname{PSL}(2,\mathbb Z)$ and that the modular Mandelbrot set is homeomorphic to the ``standard'' Mandelbrot set. In \cite{bullett2017mating}, the former conjecture was modified to account for the existence of parabolic fixed points and was subsequently proved. 
They show that for every parameter in the modular Mandelbrot set, the correspondence is a quasiconformal mating between $\operatorname{PSL}(2,\mathbb Z)$ and a degree two rational map with a parabolic fixed point. In the sequel \cite{bullett2020mating}, they further show that the modular Mandelbrot set is indeed homeomorphic to the Mandelbrot set. They (and we in Theorem~\ref{thm:BP} below) denote by $\Lambda_{a,+}$ the maximal set on which there exists a backward branch of $F_a$, under which $\Lambda_{a,+}$ is invariant, that is hybrid equivalent on $\Lambda_{a,+}$ to the rational map that $F_a$ mates with $\operatorname{PSL}(2,\mathbb Z)$ (see \cite{Douady1985OnTD} for a definition of hybrid equivalence).

In \cite{Lee_2021}, a large family of correspondences with interesting properties were introduced in the antiholomorphic setting. In \cite{lyubich2023antiholomorphic}, it is shown that for every parabolic antirational map with connected Julia set $R$ of degree $d$, there exists an antiholomorphic correspondence in the aforementioned family that is a mating between $R$ and $\mathbb Z/2\mathbb Z*\mathbb Z/(d+1)\mathbb Z$. Recently, in \cite{bullett2024matingparabolicrationalmaps}, it was shown that any degree $d$ rational map that has a parabolic fixed point of multiplier 1, and a completely invariant and simply connected immediate basin of attraction is mateable with the Hecke group $H_{d+1}.$

The main subject of this paper is \emph{conformal measures} on limit sets of correspondences. In \cite{patterson}, Patterson introduced and constructed conformal measures on limit sets of Fuchsian groups, and in \cite{sullivan}, Sullivan demonstrated that the same ideas may be used to to construct conformal measures on Julia sets of rational maps. These are measures that under elements of the group or under branches of the inverse of the rational map (away from critical values) transform according to \eqref{eq:conf} that is given in Definition~\ref{def:conformalmeasures} below. As such, they have proved to be a powerful tool to study the Hausdorff measure of the limit or Julia sets. For instance, in \cite{urbanskidenker1} and \cite{urbanskidenker6}, it is proved that the Julia set of a parabolic rational map has Hausdorff dimension strictly less than $2$ and is equal to the unique exponent $\delta$ appearing in Definition~\ref{def:conformalmeasures} for which the measure is non-atomic. Before presenting our definition of conformal measures on limit sets of correspondences, we need the following notion.

\begin{definition}
    Let $F\:\setRS\to \setRS$ be an (anti)holomorphic correspondence and $x\in \setRS$. The set $\bigcap _{n=0}^{+\infty} \overline{\bigcup_{k=n}^{+\infty} F^k(x)}$ is the \emph{forward limit set with respect to $x$}  and is denoted by $\Lambda_+(x)$.
\end{definition}

    Throughout the text, $x$ will often be fixed, and in this case we will omit ``with respect to $x$'', and simply say \emph{forward limit set}.
In this paper, $|Df|$ denotes the absolute value in the spherical metric of the total derivative of $f$. We say that $(A,f)$, where $A\subseteq \setRS$ is a connected Borel set and $f$ a branch of $F$, is a \emph{special pair} of a subset $\Lambda\subseteq \setRS$ 
 when the branch $f$ of $F$ is defined and injective on $A$, and $f(A\smallsetminus \Lambda)\cap \Lambda= \emptyset$. We are ready to state the main definition of the present text.

\begin{definition}\label{def:conformalmeasures}
    A Borel probability measure $\mu$ with support contained in a subset $\Lambda\subseteq \setRS$ is \emph{$\delta$-conformal} for $F$ and $\Lambda$ if for each special pair $(A,f)$ of $\Lambda$,
    \begin{equation}\label{eq:conf}
        \mu(f(A))=\int_A \!|Df|^\delta \,\mathrm{d}\mu.
    \end{equation}
    A measure is \emph{conformal} for $F$ and $\Lambda$ if it is $\delta$-conformal for $F$ and $\Lambda$, for some real number $\delta$.
\end{definition}
When $F$ and $\Lambda$ are clear from the situation, we will simply say that the measure $\mu$ is $\delta$-conformal, and that in this situation, that $\mu$ is conformal.
If there exists a $\delta$-conformal measure for $F$ and $\Lambda$, we shall also say that $\Lambda$ admits a $\delta$-conformal measure for $F$.

The condition that $f(A\smallsetminus \Lambda)\cap \Lambda= \emptyset$ is included because, in general, the limit sets on which we will construct conformal measures will only be forward, and not backward, invariant under $F$. Throughout the text,  $\operatorname{HD}(S)$ denotes the Hausdorff dimension of the set $S$ (see e.g., \cite{Beardon2000-ns}). 

Two of our main results are the existence of non-atomic conformal measures for two particular well-studied families.

The first family, mentioned above, was introduced in \cite{Lee_2021} and studied thoroughly in e.g., \cite{lyubich2023antiholomorphic}. Let $d\geq 1$. For each rational map $f \colon \widehat{\mathbb{C}} \rightarrow \widehat{\mathbb{C}}$ that is univalent on $\overline{\mathbb{D}}$ and of degree $d+1$,  they define the antiholomorphic correspondence 
\begin{equation*}
    F(z) \coloneqq  \biggl\{ w \in \widehat{\mathbb{C}} : \frac{f(w) -f(\eta (z))}{w -\eta (z)} =0 \biggr\}.
\end{equation*}
We call the correspondence $F$ defined by $f$ as above, \emph{the LLMM correspondence defined by $f$} (LLMM is short for Lee, Lyubich, Makarov, and Mukherjee).  

In Section~\ref{sec:misha}, we study a subset of this family of correspondences, where the map $f$ needs to satisfy some extra assumptions, introduced in Definition~\ref{def:finM}. The first main result of this paper is Theorem~\ref{thm:misha} stated below. To keep this section shorter and concise, we refer the reader to Section~\ref{sec:misha} for the required definitions.

\begin{thmx}\label{thm:misha}
    Let $f\in \mathcal{M}$ and $F$ be the LLMM correspondence defined by $f$. Suppose that the map $R$, appearing in Definition~\ref{def:finM}, has an attracting periodic orbit in $\setRS\smallsetminus \mathcal{B}(R)$. Then there exists a non-atomic $\delta$-conformal measure for $F$ and $\Lambda_+$, for some $1\leq \delta<2$ and $1\leq \operatorname{HD}(\Lambda_+)\leq \delta$.
\end{thmx}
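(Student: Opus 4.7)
The plan is to reduce Theorem~\ref{thm:misha} to the general conformal-measure existence theorem announced in the abstract, whose hypotheses are that $1 \leq \deltacrit(x) < +\infty$, that $F$ is relatively hyperbolic on $\Lambda_+(x)$, and that $\Lambda_+(x)$ is minimal for some base point $x$. I would pick $x$ on the ``group side'' of the mating decomposition of $F$ and verify these three conditions in turn; once they hold, the general theorem immediately yields a non-atomic $\delta$-conformal measure for some $\delta \in [1, 2)$ with $\operatorname{HD}(\Lambda_+) < 2$, and it only remains to upgrade this to the two-sided bound $1 \leq \operatorname{HD}(\Lambda_+) \leq \delta$.

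Minimality of $\Lambda_+$ should follow from the mating structure established in \cite{Lee_2021, lyubich2023antiholomorphic}: on the ``group component'' of the grand orbit, $F$ is topologically conjugate to the action of $\mathbb{Z}/2\mathbb{Z} \ast \mathbb{Z}/(d+1)\mathbb{Z}$ on its limit set, which is minimal, and the rational-type branches of $F$ transport this minimality to the remainder of $\Lambda_+$. For relative hyperbolicity I would use the hypothesis that $R$ admits an attracting periodic cycle in $\setRS \smallsetminus \mathcal{B}(R)$: this forces every free critical orbit of $R$ to accumulate on the attracting cycle, so the rational-type dynamics of $F$ is uniformly expanding away from a finite parabolic set consisting of the cusp where the mating is glued together with any parabolic cycles implicit in the definition of $\mathcal{M}$, while the group-type dynamics is uniformly expanding off the same set. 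I expect this step to be the principal obstacle, because one must (a) pin down the parabolic set of the multivalued map $F$ and check that it is finite, (b) construct the appropriate system of petals and collars adapted to the multivalued dynamics, and (c) prove the uniform expansion estimate in a metric compatible with the precise definition of relative hyperbolicity adopted in the paper. For the critical exponent, the upper bound $\deltacrit(x) < +\infty$ is then a routine Poincar\'e-series estimate in terms of spherical areas of pullbacks, and the lower bound $\deltacrit(x) \geq 1$ is obtained by comparing the $F$-Poincar\'e series from below with that of the group factor acting on its limit circle, whose critical exponent equals~$1$.

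Feeding (i)--(iii) into the general theorem produces a non-atomic $\delta$-conformal measure $\mu$ on $\Lambda_+$ with $\delta = \deltacrit(x) \in [1, 2)$. The lower estimate $\operatorname{HD}(\Lambda_+) \geq 1$ is immediate, since $\Lambda_+$ contains a topological circle arising from the group factor of the mating. For the upper estimate $\operatorname{HD}(\Lambda_+) \leq \delta$, I would apply a standard mass-distribution argument: for $y \in \Lambda_+$ outside the (countable, hence Hausdorff-negligible) grand orbit of the parabolic set, conformality of $\mu$ together with Koebe distortion for inverse branches of $F$ in the hyperbolic region gives $\mu(B(y,r)) \geq c\, r^\delta$ at all sufficiently small scales $r$, from which $\operatorname{HD}(\Lambda_+) \leq \delta$ follows at once.
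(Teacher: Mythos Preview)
Your overall strategy---reduce to the general existence theorem by verifying relative hyperbolicity, minimality, and the critical-exponent bounds---matches the paper's, but several of your proposed mechanisms are wrong, and in particular you have misidentified where the attracting-orbit hypothesis enters.

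First, the paper applies Theorem~\ref{thm:existencemmeasure} (Theorem~D), not Theorem~\ref{thm:main} directly, and the base point $x$ is chosen in $\operatorname{int}(K_+)$, i.e.\ on the \emph{rational} side, not the group side. Lemma~\ref{le:islimitset} shows $\Lambda_+(x)=\Lambda_+$ for such $x$ via the conjugation $\Phi\circ f\circ\eta$ between $g|_{K_+}$ and $R$. Minimality (Proposition~\ref{prop:LambdaMinimal}) is then proved using that $J(R)$ has no proper closed backward-invariant subset, not using the group action.

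Second, relative hyperbolicity (Proposition~\ref{Frelativelyhyperbolic}) holds for \emph{every} $f\in\mathcal{M}$, without any hypothesis on attracting cycles of $R$. The key step $\Lambda_+\cap\mathcal{PC}_{F^{-1}}\subseteq\Omega_+$ (Proposition~\ref{Lambda+capPCF-1=Omega}) uses only that $R$ has no Siegel disks or Herman rings (built into Definition~\ref{def:finM}) together with Sullivan's classification. Your sketch attributes the expansion away from parabolics to the attracting cycle absorbing critical orbits; that is not how the paper argues, and it is not where the hypothesis is needed.

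Third, and most importantly, your proposed lower bound $\deltacrit(x)\geq 1$ by comparison with the group Poincar\'e series does not work here: for $x\in K_+$, the correspondence $F$ is invariantly inverse-like on $K_+$, so every branch of $F^n$ appearing in $P_s(x)$ stays in $K_+$ and sees only the rational-type dynamics, never the group action. The paper instead uses the attracting-orbit hypothesis \emph{precisely} at this step: the attracting cycle of $R$ in $\setRS\smallsetminus\mathcal{B}(R)$ yields, via the conjugation, an attracting periodic orbit of $g$ in $\operatorname{int}(K_+)$, and Lemma~\ref{le:delta} then gives $\deltacrit(x)\geq 1$ by a curve-length argument (a closed curve $C$ around the attractor has $\lambda_1(F^{nk}(C))$ bounded below, forcing $P_1(x)=+\infty$). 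This is the main novel technical point.

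Finally, for $\operatorname{HD}(\Lambda_+)\geq 1$: the paper observes that $\Lambda_+$ is homeomorphic to $J(R)=\partial\mathcal{B}(R)$, hence connected and not a point. Your claim that $\Lambda_+$ contains a topological circle ``from the group factor'' is not right; the group acts on the tiling set, whereas $\Lambda_+=\partial K_+$ lives entirely on the rational side.
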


The second family is the family of Bullett--Penrose correspondences and we show the following.
    
\begin{thmx}\label{thm:BP}
    Let $F_a$ be the Bullett--Penrose correspondence for a parameter $a$
    in the interior of a hyperbolic component of the modular Mandelbrot set. Then there exists a non-atomic $\delta$-conformal measure for $F_a$ and $\partial\Lambda_{a,+}$ for some $1\leq \delta<2$ and $1\leq \operatorname{HD}(\partial \Lambda_{a,+})\leq \delta$.
\end{thmx}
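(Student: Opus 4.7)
The plan is to reduce Theorem~\ref{thm:BP} to the general existence result announced in the abstract, namely the theorem asserting that if $1\leq \deltacrit(x) <+\infty$, $F$ is relatively hyperbolic on $\Lambda_+(x)$, and $\Lambda_+(x)$ is minimal, then $\Lambda_+(x)$ carries a non-atomic $\delta$-conformal measure with $\delta<2$ and $\operatorname{HD}(\Lambda_+(x))\leq \delta$. The strategy is therefore to choose an appropriate base point $x\in \setRS$ and verify the three hypotheses (finite critical exponent, relative hyperbolicity, and minimality) for the boundary $\partial\Lambda_{a,+}$ of the Bullett--Penrose correspondence $F_a$ when $a$ lies in the interior of a hyperbolic component of the modular Mandelbrot set.

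First, I would use the mating description of \cite{bullett2017mating, bullett2020mating}: for $a$ in the modular Mandelbrot set, $F_a$ is a quasiconformal mating of a degree-two rational map $R_a$ (with a parabolic fixed point, and, when $a$ is in a hyperbolic component, an attracting cycle) with $\operatorname{PSL}(2,\bZ)$, and $\Lambda_{a,+}$ is the invariant set on which a branch of $F_a$ is hybrid equivalent to $R_a$ on its filled Julia set. The boundary $\partial\Lambda_{a,+}$ then corresponds under this hybrid conjugacy to the Julia set $J(R_a)$. I would pick $x$ to be a repelling periodic point of the contracting branch on $\partial\Lambda_{a,+}$ (for instance, the preimage of a repelling periodic point of $R_a$). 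With this choice, $\Lambda_+(x)$ equals $\partial\Lambda_{a,+}$, and minimality on $\partial\Lambda_{a,+}$ follows from the density of backward orbits on the Julia set of $R_a$ transported via the conjugacy, together with transitivity of the $\operatorname{PSL}(2,\bZ)$-action on its limit set (here the limit set of the group component degenerates appropriately under the mating, so only the rational side contributes to $\partial\Lambda_{a,+}$).

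Next, I would verify relative hyperbolicity of $F_a$ on $\partial\Lambda_{a,+}$ relative to the parabolic orbit of the rational factor $R_a$. Since $a$ lies in a hyperbolic component, $R_a$ has no critical points on its Julia set and no recurrence of critical orbits to $J(R_a)$ other than the parabolic point that is intrinsic to the mating. A standard expansion-on-the-complement-of-a-neighborhood-of-parabolics argument for $R_a$ (as in \cite{urbanskidenker1, urbanskidenker6}), together with the fact that on the $\operatorname{PSL}(2,\bZ)$-side the relevant branch is a Möbius isometry of the hyperbolic metric, then transfers through the quasiconformal conjugacy to give the required relative hyperbolicity of $F_a$ on $\partial\Lambda_{a,+}$, with the parabolic fixed point playing the role of the exceptional cusp. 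The finiteness $\deltacrit(x)<+\infty$ follows because $\partial\Lambda_{a,+}$ is compact and, by the same conjugacy, the relevant Poincaré series for $F_a$ is comparable, away from the cusp, to the Poincaré series of a hyperbolic rational map with parabolics, which is known to have finite critical exponent. The bound $\deltacrit(x)\geq 1$ follows because $\partial\Lambda_{a,+}$ contains the image of the parabolic Julia set, which has Hausdorff dimension at least $1$, and the standard Bowen-type lower bound $\operatorname{HD}\leq \deltacrit$ for such sets.

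Having verified all three hypotheses, the general theorem furnishes a non-atomic $\delta$-conformal measure on $\partial\Lambda_{a,+}$ with $1\leq \delta<2$. The Hausdorff dimension bounds are then immediate: the upper bound $\operatorname{HD}(\partial\Lambda_{a,+})\leq \delta$ is the conclusion of the general theorem, while the lower bound $1\leq \operatorname{HD}(\partial\Lambda_{a,+})$ follows from the presence of the parabolic Julia component as above, or alternatively from the fact that $\partial\Lambda_{a,+}$ supports a non-atomic measure and contains non-trivial continua arising from the mating structure. The main obstacle I anticipate is the verification of relative hyperbolicity uniformly across the whole correspondence (rather than just its rational factor), which requires careful control of the branches of $F_a$ near the parabolic orbit and on the $\operatorname{PSL}(2,\bZ)$-side of the mating; the rest of the argument is largely a bookkeeping exercise translating known facts about parabolic rational maps and Kleinian groups into the language of the correspondence via the quasiconformal conjugacy.
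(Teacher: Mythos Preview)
Your overall strategy---reduce to one of the general existence theorems after verifying relative hyperbolicity and minimality on $\partial\Lambda_{a,+}$---is the paper's strategy, and your reading of the mating structure is essentially correct. The paper carries out that verification in Lemma~\ref{le:Bullettinverseetc} and then applies Theorem~\ref{thm:existencemmeasure} rather than Theorem~\ref{thm:main}; in particular it takes $x$ in the \emph{interior} of $\Lambda_{a,+}$, in the immediate basin $\mathcal{A}$ of the attracting periodic orbit of $g_{F,\Lambda_{a,+}}$ (equivalently, of the branch $f_a$ of $F_a^{-1}$), which exists precisely because $a$ lies in a hyperbolic component---not a repelling periodic point on the boundary as you propose.

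The genuine gap is your treatment of the critical exponent, which is the main technical point. Your argument for $\deltacrit(x)<+\infty$ via quasiconformal comparison with the Poincar\'e series of $R_a$ fails: quasiconformal maps do not control derivatives, so the two series are not comparable. Your argument for $\deltacrit(x)\geq 1$ via ``$\operatorname{HD}\leq \deltacrit$'' is circular as stated: in this paper that inequality is a \emph{conclusion} of Theorem~\ref{thm:main}, obtained through the conformal-measure construction, which itself requires a lower bound on $\deltacrit(x)$ (and $\deltacrit(x)<+\infty$) as hypotheses. The introduction singles out exactly this circularity as the gap in the earlier approach of \cite{MarianneFreiberger2007}. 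The missing ingredient, supplied in Lemma~\ref{le:delta}, is a direct length/area argument using the attracting basin and no conjugacy at all: the images of a small disk $U\subseteq\mathcal{A}$ around $x$ under distinct branches of $F^n$ are pairwise disjoint, so $\int_U P_2\,\mathrm d\lambda_2<+\infty$ by finiteness of spherical area, giving $\deltacrit(x)\leq 2$; and a closed curve $C\subseteq\mathcal{A}$ through $x$ encircling the attracting periodic point has forward images $F^{nk}(C)$ of length bounded below (each must enclose a fixed disk in $S$), so $\int_C P_1\,\mathrm d\lambda_1=+\infty$, giving $\deltacrit(x)\geq 1$.
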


Theorems~\ref{thm:misha} and \ref{thm:BP} are in fact applications of the more general Theorems~\ref{thm:main} and \ref{thm:existencemmeasure}. In order to introduce these results, we require a few more definitions.

The construction of conformal measures that Patterson \cite{patterson} and Sullivan \cite{sullivan} carried out, that we here emulate, employs the Poincar\'e series, which is defined in what follows.  The degrees in the variable $z$ (resp.\ $\overline{z}$) of $P(z,w)$ (resp.\ $P(\overline{z},w$))  will be denoted by $d_z$ and the degree in $w $ will be denoted by $d_w$. 
Suppose that $x\in \setRS$ is such that there exists a neighborhood $U$ of $x$ on which all branches of $F^n$ for all $n\geq 1$ are defined. For $n\geq 0$, we denote by $M_n\coloneqq M_n(U)$ the number of branches of $F^n$ defined in $U$, and denote these branches by $f_{n,j}$, where $j=1,\, \dots, \, M_n$. 
For $s> 0$, we define the Poincar\'e series
\begin{equation}\label{eq:poincare}
    P_s(x)\coloneqq  \sum_{n=0}^{+\infty} \sum_{j=1}^{M_n}|Df_{n,j}(x)|^s.
\end{equation} 
If there exists no neighborhood of $x$ on which all branches of $F^n$ are defined, or $P_s(x)$ diverges for all $s>0$, we set $P_s(x)=+\infty$ for all $s>0$. Note that if $P_{s}(x)$ converges for some $s>0$, then $P_t(x)$ converges for all $t>s$. We can now define the following important quantity. 
\begin{definition}\label{def:crit}
    If there exist $s>0$ and $t>0$ such that $P_{s}(x)$ converges  and $P_t(x)$ diverges, then 
\begin{equation*}
	\delta_{\operatorname{crit}}(x) \coloneqq \inf\{s>0:P_s(x)<+\infty\}.
\end{equation*}
If no $s>0$ such that $P_s(x)<+\infty$ exists, then $\delta_{\operatorname{crit}}(x)\coloneqq +\infty $, or if $P_s(x)<+\infty$ for each $s>0$, then $\deltacrit(x)\coloneqq 0$. The extended real number $\deltacrit(x)$ is called the \emph{critical exponent of $F$ at $x$}. 
\end{definition}
\begin{rem}
    When $F$ and $x$ are clear from the situation, we will simply call $\deltacrit(x)$ the \emph{critical exponent}.
\end{rem}

The point $\omega\in \setRS$ is a \emph{parabolic} periodic point of $F$ if there exists an integer $q\geq 1$ and a branch $f_I$ of $F^q$ defined in a neighborhood of $\omega$ such that $f_I(\omega)=\omega$ and $Df_I(\omega)$ is a root of unity or, equivalently, there exists an integer $n\geq 1$ such that $Df_I^n(\omega)=1$. We denote by $\Omega_+(x)$ the set of all parabolic periodic points of $F$ in $\Lambda_+(x)$. We shall impose conditions on $F$ that imply that for each $\omega$, there exists a unique branch $T_\omega$ of $F^q$ fixing $\omega$, and that iterates of this branch does not equal the identity, see Section~\ref{sec:importantdefs}. Then there exist integers $n\geq 1$ and $p(\omega)\geq 1$, a complex number $a\neq 0$, and a neighborhood $V_{\omega}$ of $\omega$ where $T_\omega^m$ is defined for $m=1,\,\dots,\,n$ such that for each $z\in V_\omega$,
\begin{equation*}
	T_{\omega}^n(z)=\omega + ({z}-\omega)+a( z-\omega)^{p(\omega)+1}+\cdots
\end{equation*}
or
\begin{equation*}
	T_\omega^n(z)=\omega + (\overline{z}-\overline\omega)+a(\overline z-\overline\omega)^{p(\omega)+1}+\cdots .
\end{equation*} 
This defines $p(\omega)\geq 1$ used in the formulation of Theorem~\ref{thm:main} below.

We now formulate the next main result of this paper. To keep this section more digestible, we defer the rather long definitions of the \emph{singular points} of $F$, \emph{relatively hyperbolic} correspondences, and \emph{minimal} limit sets  to Section~\ref{sec:importantdefs},  see Definitions~\ref{def:sing}, \ref{def:wellbehaved}, and \ref{def:minimal}.

\begin{thmx}\label{thm:main}
    Let $x\in \setRS$ and $F$ be an (anti)holomorphic correspondence that is relatively hyperbolic on $\Lambda_+(x)$ and such that $\Lambda_+(x)$ is minimal. If $\delta_{\operatorname{crit}}(x)$ satisfies \begin{equation*}
    	\sup_{\omega \in \Omega_+(x)}p(\omega)/(p(\omega)+1)<\delta_{\operatorname{crit}}(x)<+\infty,
    \end{equation*}
    then there exists a non-atomic $\deltacrit(x)$-conformal measure for $F$ and $\Lambda_+(x)$, and
    \begin{equation*}
    	\operatorname{HD}(\Lambda_+(x)) \leq \deltacrit(x)<2.
    \end{equation*}
\end{thmx}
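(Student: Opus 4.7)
The plan is to adapt the Patterson--Sullivan construction to the correspondence setting, using the Poincaré series \eqref{eq:poincare} based at $x$ to build a candidate measure, and then exploit relative hyperbolicity plus minimality to verify all the required properties.

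First I would form the atomic probability measures
\begin{equation*}
    \mu_s \coloneqq \frac{1}{P_s(x)}\sum_{n=0}^{+\infty}\sum_{j=1}^{M_n}|Df_{n,j}(x)|^{s}\,\delta_{f_{n,j}(x)}
\end{equation*}
for $s>\deltacrit(x)$ close to $\deltacrit(x)$. If the Poincaré series diverges at the critical exponent the $\mu_s$ already concentrate on $\Lambda_+(x)$ in the weak-$*$ limit, while if it converges, I would use Patterson's device of inserting a slowly varying function into the coefficients to force divergence at $\deltacrit(x)$ without changing the critical exponent. Taking any weak-$*$ subsequential limit $\mu \coloneqq \lim_{s_k\downarrow \deltacrit(x)}\mu_{s_k}$ gives a Borel probability measure supported in $\Lambda_+(x)$; minimality of $\Lambda_+(x)$ will be used to upgrade ``supported in $\Lambda_+(x)$'' to supports that do not degenerate onto a proper invariant subset.

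Next I would verify that $\mu$ is $\deltacrit(x)$-conformal in the sense of Definition~\ref{def:conformalmeasures}. For a special pair $(A,f)$ of $\Lambda_+(x)$, I would track how $f$ acts on the mass contributed by each $f_{n,j}(x)\in A$: composition with $f$ identifies these with a subcollection of the $(n{+}1)$-st level branches, and relative hyperbolicity supplies the Koebe-type bounded distortion needed to replace the discrete multiplicative factor $|Df(f_{n,j}(x))|^{s_k}$ by the integrand $|Df|^{s_k}$ up to controlled errors. Letting $s_k\downarrow \deltacrit(x)$ and using that $f(A\smallsetminus\Lambda_+(x))\cap\Lambda_+(x)=\emptyset$ (to avoid double counting outside the support of $\mu$) should yield \eqref{eq:conf}.

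The central obstacle, and the step that consumes the hypothesis $\sup_{\omega\in\Omega_+(x)}p(\omega)/(p(\omega)+1)<\deltacrit(x)$, is showing that $\mu$ is non-atomic. Away from $\Omega_+(x)$, relative hyperbolicity gives uniform backward contraction, and the conformality relation forces the mass of a potential atom to double under every inverse branch, which is impossible for a finite measure. At a parabolic periodic point $\omega\in\Omega_+(x)$, I would use the standard normal form
\begin{equation*}
    T_\omega^n(z)=\omega+(z-\omega)+a(z-\omega)^{p(\omega)+1}+\cdots
\end{equation*}
(or its antiholomorphic analogue) to estimate $|D T_\omega^{nk}(z)|$ on horoball-type regions near $\omega$, where the iterates tend to $\omega$ at rate $k^{-1/p(\omega)}$. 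Summing the masses that the $\mu_s$ assign to successive shells around $\omega$ yields a geometric series whose convergence is controlled by $s\cdot(1+1/p(\omega))-1/p(\omega)>0$, equivalently $s>p(\omega)/(p(\omega)+1)$; the assumed strict inequality at $\deltacrit(x)$ guarantees that these tail sums are uniformly small, so $\mu(\{\omega\})=0$.

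Finally, for the dimension statements I would run the standard Frostman-type argument: relative hyperbolicity together with bounded distortion yields lower bounds of the form $\mu(B(y,r))\lesssim r^{\deltacrit(x)}$ for $y\in\Lambda_+(x)$ at hyperbolic points, while the parabolic shell estimates just established give the corresponding bound near $\Omega_+(x)$, whence $\operatorname{HD}(\Lambda_+(x))\le \deltacrit(x)$. For the strict inequality $\deltacrit(x)<2$, I would argue by contradiction: if $\deltacrit(x)=2$, the $2$-conformal measure $\mu$ would transform under branches of $F$ like the spherical area, and combined with minimality this would force $\Lambda_+(x)=\setRS$, contradicting the existence of a neighborhood of $x$ on which all backward iterates are defined and the relative hyperbolicity hypothesis, which requires singular/parabolic data disjoint from a nontrivial complement of the limit set.
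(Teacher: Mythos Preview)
Your overall architecture---Patterson--Sullivan construction, verification of conformality, parabolic tail estimate to kill atoms at $\Omega_+(x)$, Frostman-type bound for the Hausdorff dimension---matches the paper's. Two steps, however, contain genuine gaps.

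\textbf{Non-atomicity off the parabolic locus.} Your claim that ``relative hyperbolicity gives uniform backward contraction, and the conformality relation forces the mass of a potential atom to double under every inverse branch'' does not apply here. Because $F$ is invariantly inverse-like on $\Lambda_+(x)$, each point has a \emph{single} backward branch $g_{F,\Lambda_+(x)}$, so there is no multiplicity-based doubling. Nor is there any uniform contraction available away from $\Omega_+(x)$: relative hyperbolicity only says $\mathcal{PC}_{F^{-1}}\cap\Lambda_+(x)\subseteq\Omega_+(x)$. The paper's argument (Lemma~\ref{le:atoms}) is different: for $z\in X$ one first uses the locally $\Omega_+(x)$-attracting property to find a subsequence $n_j$ with $g^{n_j}_{F,\Lambda_+(x)}(z)$ bounded away from $\Omega_+(x)$, then uses minimality (empty interior plus condition~(\ref{it:U}) in Definition~\ref{def:minimal}) to see that the corresponding forward branches $f_{n_j}$ form a normal family with only constant limits, forcing $|Dg^{n_j}_{F,\Lambda_+(x)}(z)|\to\infty$. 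Conformality then makes $\mu(\{g^{n_j}(z)\})\to\infty$, a contradiction.

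\textbf{The strict inequality $\deltacrit(x)<2$.} Your argument that a $2$-conformal measure ``would transform like the spherical area, and combined with minimality this would force $\Lambda_+(x)=\setRS$'' does not go through: a nowhere dense set can carry a measure transforming like area without being the whole sphere, and minimality alone (no proper closed forward-invariant subset) says nothing about this. The paper instead first proves (Proposition~\ref{prop:area}) that $\Lambda_+(x)$ has \emph{zero} Lebesgue area, via a porosity argument that again hinges on the normality/empty-interior mechanism above. Only then can one run a Besicovitch covering comparison (Proposition~\ref{prop:HDmain}): the two-sided estimate $\mu(B(z,r_j))\asymp r_j^{\delta}$ from Lemma~\ref{le:3}, combined with $\delta\ge 2$ and zero area, forces $\mu(X)=0$, contradicting non-atomicity. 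This porosity step is the missing idea in your outline.
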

\begin{rem}
    By definition, hyperbolicity implies relative hyperbolicity, see Definition~\ref{def:wellbehaved}.
\end{rem}
A correspondence $F$ is \emph{invariantly inverse-like} on a set $S$ if $F(S)\subseteq S$ and for each $w\in F(S)$, there exists a unique $z\in S$ such that $z\in F^{-1}(w)$. If $F\:z\to w$ is invariantly inverse-like on $S$, we define the map $g_{F,S}\:F(S)\to S$ by $g_{F,S}(w)\coloneqq z$. Note that $F=g_{F,S}^{-1}$ on $S$.
The final result of this paper is the following theorem. 
\begin{thmx}\label{thm:existencemmeasure}
        Let $F$ be an invariantly inverse-like (anti)holomorphic correspondence on a closed set $S\subsetneq \setRS$. Suppose that $g_{F,S}$ has an attracting periodic orbit in the interior of $S$, with immediate basin of attraction $\mathcal A$. 
        Suppose further that there exists $x\in \mathcal{A}\smallsetminus \mathcal{PC}_{F^{-1}}$ such that $F$ is relatively hyperbolic on $\Lambda_+(x)$, that $\Lambda_+(x)$ is minimal, and that $\Lambda_+(x)\cap \sing=\emptyset$. Then $\operatorname{HD}(\Lambda_+(x))<2$ and there exists a non-atomic $\delta$-conformal measure for some $1\leq \delta < 2$.
\end{thmx}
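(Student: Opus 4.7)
My plan is to deduce Theorem~\ref{thm:existencemmeasure} from Theorem~\ref{thm:main} by verifying its hypotheses for the given $x$ and then establishing the bounds $1\leq\deltacrit(x)<2$.

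First I would observe that $\Omega_+(x)=\emptyset$: every parabolic periodic point of $F$ is a singular point (see Definition~\ref{def:sing}), so the hypothesis $\Lambda_+(x)\cap\sing=\emptyset$ excludes parabolic periodic points from the forward limit set. The term $\sup_{\omega\in\Omega_+(x)}p(\omega)/(p(\omega)+1)$ in Theorem~\ref{thm:main} is therefore vacuous, and it suffices to verify $0<\deltacrit(x)<+\infty$.

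Next I would establish $\deltacrit(x)<2$ using the attracting-cycle structure. Because $F$ is invariantly inverse-like on $S$, each branch $f_{n,j}$ of $F^n$ at $x$ has image $y=f_{n,j}(x)\in S$ satisfying $g_{F,S}^n(y)=x$, and by the chain rule $|Df_{n,j}(x)|=|Dg_{F,S}^n(y)|^{-1}$. The Poincar\'e series thus takes the dual form
\begin{equation*}
    P_s(x) = \sum_{n=0}^{+\infty}\sum_{y\in (g_{F,S}^n)^{-1}(x)} |Dg_{F,S}^n(y)|^{-s}.
\end{equation*}
Since $x\notin\mathcal{PC}_{F^{-1}}$, each $f_{n,j}$ is univalent on a fixed round disc $B$ centered at $x$, and Koebe distortion gives $|Df_{n,j}(x)|^2$ comparable to $m(f_{n,j}(B))/m(B)$, where $m$ denotes spherical area. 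Summing in $j$ yields $\sum_j|Df_{n,j}(x)|^2\leq C\, m\bigl(\bigcup_j f_{n,j}(B)\bigr)$. By relative hyperbolicity and the attracting-cycle hypothesis, the images $f_{n,j}(B)$ contract into smaller neighborhoods of $\Lambda_+(x)$ at a geometric rate governed by the cycle multiplier; since $\Lambda_+(x)\subseteq\setRS\smallsetminus\mathcal A$ has zero area, the cumulative area decays geometrically in $n$. This produces $\theta<1$ and $C<\infty$ with $\sum_j |Df_{n,j}(x)|^2\leq C\theta^n$, hence $P_s(x)<+\infty$ for some $s<2$ and $\deltacrit(x)<2$. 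On the other hand, $P_0(x)=\sum_n M_n=+\infty$ since the branch tree is unbounded, so $\deltacrit(x)>0$. Theorem~\ref{thm:main} now produces a non-atomic $\deltacrit(x)$-conformal measure for $F$ and $\Lambda_+(x)$, and gives $\operatorname{HD}(\Lambda_+(x))\leq\deltacrit(x)<2$.

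To obtain $\delta=\deltacrit(x)\geq 1$, I would argue that $\Lambda_+(x)$ must contain a nondegenerate continuum, so $\operatorname{HD}(\Lambda_+(x))\geq 1$: since $\mathcal A$ is a nonempty open proper subset of $S$ and $\Lambda_+(x)$ is a nonempty, minimal, closed $F$-forward-invariant set meeting the closure of $\mathcal A$ from the outside, $\Lambda_+(x)$ must contain the topological boundary of (an iterate of) $\mathcal A$, which is a continuum separating $\mathcal A$ from its complement. Combined with $\operatorname{HD}(\Lambda_+(x))\leq\deltacrit(x)$ this finishes the proof.

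The principal obstacle is the quantitative Koebe/area step yielding $\deltacrit(x)<2$: one must control distortion uniformly along the entire tree of forward $F^n$-branches at $x$, and verify that their cumulative images really concentrate on a set of positive codimension at a geometric rate. The assumption $x\notin\mathcal{PC}_{F^{-1}}$ secures the Koebe bounds, while relative hyperbolicity together with the attracting-cycle multiplier supplies the exponential decay. Extracting the open neighborhood of $\Lambda_+(x)$ whose area shrinks uniformly along $F^n$-images, and matching its rate against the branching factor $d_w$, is the technical heart of the argument.
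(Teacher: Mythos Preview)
Your first step contains a genuine error: parabolic periodic points of $F$ are \emph{not} in general singular points. A point $\omega\in\sing$ is one where the projection $\pi_z\colon\Gamma\to\setRS$ fails to be locally injective above $\omega$, i.e., where branches of $F$ collide; a parabolic periodic point merely has a branch $f_I$ of some $F^q$ fixing $\omega$ with $Df_I(\omega)$ a root of unity, which imposes no such collision. The Bullett--Penrose correspondences of Section~\ref{sec:bullett} illustrate this directly: there $\Omega_+(x)=\{0\}$ while $\Lambda_+(x)\cap\sing=\emptyset$. So you cannot discard the lower bound hypothesis in Theorem~\ref{thm:main}, and you still need $\deltacrit(x)>\sup_\omega p(\omega)/(p(\omega)+1)$.

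This is precisely the role of Lemma~\ref{le:delta}, which the paper singles out as the main new technique: a curve $C\subseteq\mathcal A$ through $x$ has images $F^{nk}(C)$ whose $\lambda_1$-length is bounded below (since each $F^{nk}(\mathfrak C)$ contains a fixed disk about the attracting point), forcing $P_1(x)=+\infty$ and hence $\deltacrit(x)\geq 1$; a disjointness/area argument gives $P_2(x)<+\infty$, hence $\deltacrit(x)\leq 2$. With $1\leq\deltacrit(x)<+\infty$ in hand, Theorem~\ref{thm:main} applies (since $p/(p+1)<1$ always) and delivers both the non-atomic $\deltacrit(x)$-conformal measure and the strict bound $\deltacrit(x)<2$. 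Note that the paper does \emph{not} obtain $\deltacrit(x)<2$ by your geometric-decay sketch; it only proves $\deltacrit(x)\leq 2$ a priori, and the strict inequality emerges afterward from Proposition~\ref{prop:HDmain}. Finally, your alternate route to $\delta\geq 1$ via a continuum in $\Lambda_+(x)$ is not available under the hypotheses of Theorem~\ref{thm:existencemmeasure}: nothing there prevents $\Lambda_+(x)$ from being totally disconnected, and the claim that $\Lambda_+(x)$ contains $\partial\mathcal A$ is unjustified.
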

The definition of the \emph{postcritical set} $\mathcal{PC}_{F^{-1}}$ of $F^{-1}$ is given in Definition~\ref{def:postcritical}, and the definitions of the \emph{singular points} of $F$, \emph{relatively hyperbolic} correspondences, and \emph{minimal} limit sets are given in Definitions~\ref{def:sing}, \ref{def:wellbehaved}, and \ref{def:minimal}, respectively.

In \cite[Theorem~11, Section~4.3]{MarianneFreiberger2007}, a similar statement to Theorem~\ref{thm:BP} was announced. Unfortunately, though, to the best of the authors of the present paper's knowledge, its proof is incomplete and the methods of \cite{mcmullen} used in \cite{MarianneFreiberger2007} are not directly applicable. To be more precise, in the proof of \cite[Theorem~7, Section~4.2]{MarianneFreiberger2007}, which is used for the proof of \cite[Theorem~11, Section~4.3]{MarianneFreiberger2007}, it is not shown that the conformal measures constructed are not point masses on the parabolic fixed point, which (at least a priori) could be the case because the parabolic fixed point 0 is a critical point of the branch of $F_a$ that does not fix 0. One way to rule out this possibility is the lower bound on $\deltacrit(x)$ appearing in Theorem~\ref{thm:main}.
Theorem~\ref{thm:existencemmeasure} provides sufficient conditions that imply the stronger property $1\leq \deltacrit(x)< 2$. In Section~\ref{sec:bullett}, we demonstrate that if $a$ belongs to the interior of a hyperbolic component of the modular Mandelbrot set, then the Bullett--Penrose correspondences satisfy these conditions for a large set of $x\in \setRS$.

A difference between the settings of Patterson or Sullivan and ours, hinted at above, is that the inverse branches of $R$ or elements of a Kleinian group $G$ cannot have critical points. The possible existence of critical points of holomorphic correspondences is significant, and in certain situations, allow for the existence of discrete conformal measures.
One of the main novel contributions of this paper is that we show how one can handle the existence of critical points of holomorphic correspondences on the correspondences' forward limit sets. In doing so, one of the main new techniques of this paper is developed, culminating in the proof of Lemma~\ref{le:delta}. Here, we study the Poincar\'e series on the boundary of a topological disk that has a repelling periodic point of $F$ in its interior. Studying the Poincar\'e series of these points simultaneously, using a topological argument and the K\"obe distortion theorem gives the desired bound $1\leq \deltacrit(x)$. Another contribution of this paper is the introduction of a new family of correspondences, namely those that are \emph{relatively hyperbolic} on the limit set $\Lambda_+(x)$, and the definition of a \emph{minimal} limit sets, see Section~\ref{sec:importantdefs}. These definitions are intricate and technical and allow us to carefully study conformal measures of (anti)holomorphic correspondences.

The structure of this paper is as follows. In Section~\ref{sec:importantdefs}, we provide the precise definitions and present the setting for our study. In Section~\ref{sec:ex}, we show that, under the assumptions of Theorems~\ref{thm:misha} and~\ref{thm:BP} there is a large set of $x\in \setRS$ such that the correspondence in question and $\Lambda_+(x)$ satisfy the assumptions of Theorem~\ref{thm:existencemmeasure}. This allows us to conclude Theorems~\ref{thm:misha} and~\ref{thm:BP}.  In Section~\ref{sec:2}, using the methods of Patterson and Sullivan, we construct conformal measures on limit sets disjoint from the set of singular points (see Definition~\ref{def:sing}) and on which $F$ is invariantly inverse-like and non-branched (see Definition~\ref{def:non-branched}). To circumvent the consequences of discrete conformal measures, in Section~\ref{sec:3}, we study \emph{open} conformal measures, i.e., measures which are positive on open sets (of $\Lambda_+(x))$. Combining results in Sections~\ref{sec:2} and \ref{sec:3} gives Theorem~\ref{thm:main}. Lastly, the critical exponent $\deltacrit(x)$ is directly studied in Section~\ref{sec:suffcond}, and we find conditions that imply that non-atomic conformal measures exist, and in particular conclude Theorem~\ref{thm:existencemmeasure}. 

\medskip

\noindent\textbf{Acknowledgements.}
We are deeply indebted to M.~Yu.~Lyubich for suggesting the research questions and his interest and support, and to S.~Mukherjee for our discussion and readily answering questions. We also want to thank Yifan~Ying for his careful reading of our manuscript and numerous helpful comments. Z.~Li was partially supported by NSFC Nos.~12471083, 12101017, 12090010, and 12090015.

\section{Singular points, minimal forward limit sets, and relative hyperbolicity}\label{sec:importantdefs}
In this section, we provide the precise setting for the present paper. 

Denote the set of positive integers by $\bN \coloneqq \{1,\,2,\,3,\, \dots \}$.

Let $P(z,w)$ (or $P(\overline z,w)$) be a polynomial with coefficients in $\bC$. The equation $P(z,w)=0$ defines an algebraic curve $\Gamma\subseteq \setRS\times \setRS$. The algebraic curve $\Gamma$, and hence also $P$, defines an \emph{(anti)holomorphic correspondence} $F\:\setRS \to \setRS$ by 
\begin{equation*}
	F(z)\coloneqq\{w: (z,w)\in \Gamma\}.
\end{equation*}
 We shall write $F\colon z\to w$ to indicate the direction of $F$. For $n\geq 2$, we iteratively define
\begin{equation*}
	F^n(z)\coloneqq \bigl\{w\in F(y): y\in F^{n-1}(z)\bigr\}.
\end{equation*}
 
For a correspondence $F\colon\setRS \to \setRS$, the set $S\subseteq \setRS$ is forward invariant if $F(S)\subseteq S$. 
Moreover, recall that $F$ is \emph{invariantly inverse-like} on a set $S$ if $F(S)\subseteq S$ and for each $w\in F(S)$, there exists a unique $z\in S$ such that $z\in F^{-1}(w)$.

For a given correspondence $F\colon z\to w$, defined by a polynomial $P(z,w)$ (or $P(\overline{z},w)$), $F^{-1}\:w\to z$ is the natural inverse correspondence, i.e., the one defined by the same polynomial. Further, $F^0$ is the identity map.

For a neighborhood $U\subseteq \setRS$, we say that all branches of $F$ are \emph{defined} in $U$ if there exist an integer $M$ and single-valued holomorphic functions  $f_j\colon U\to \setRS$ for $j=1,\,\dots,\, M$ such that 
\begin{equation*}
	F(z)=\{f_j(z):j \in \{1, \, \dots, \,  M\}\}
\end{equation*}
for each $z\in U$. In this case, 
\begin{equation*}
	\{ f_{j}:j \in \{1, \, \dots, \,  M\}\}
\end{equation*}
is the set of \emph{branches} of $F$.

Let $F$ be a holomorphic correspondence defined by the algebraic curve $\Gamma$. The maps $\pi_z\colon\Gamma\to \setRS$ and $\pi_w\colon \Gamma\to \setRS$ are the projections onto the first and second coordinates, respectively.
There are several definitions of the important notion of \emph{critical values} of a correspondence $F$. Following \cite{Dinh2020DynamicsOH}, the set of \emph{ramification points} of $F$, denoted by $R$, is the finite set that consists of all points $a\in \Gamma$ satisfying that for each neighborhood $W$ of $a$, $\pi_w$ is not injective on some irreducible component of $\Gamma\cap W$. The set of \emph{critical values} $\mathcal{CV}_F$ of $F$ is the set $\pi_w(R)$. Note that for each $n\geq 0$, all branches of $F^n$ are always defined in any simply connected domain not containing points of $\bigcup_{i=0}^{n-1}F^{-i}(\mathcal{CV}_{F^{-1}})$. We can now give the following definition.\begin{definition}\label{def:postcritical}
    The set $\mathcal{PC}_{F^{-1}}\coloneqq \overline{\bigcup_{i=0}^{+\infty}F^{-i}(\mathcal{CV}_{F^{-1}})}$ is called the \emph{postcritical set} of $F^{-1}$.
\end{definition}

We will also need the following definition.
    \begin{definition}
        A point $z\in \setRS$ is a \emph{critical point} (of $F$) if there exists an (anti)holomorphic branch $f$ of $F$ defined in a neighborhood of $z$ such that $Df(z)=0$. The set of critical points of $F$ is denoted by $\mathcal C_F$. 
\end{definition}

\begin{definition}\label{def:non-branched}
If $\Lambda_+(x)\cap \mathcal{CV}_{F^{-1}}=\emptyset$, we say that
    $F$ is \emph{non-branched} on $\Lambda_+(x)$.
\end{definition}
We now provide the definition of \emph{singular points} of $F$, needed for Theorem~\ref{thm:main}.
\begin{definition}\label{def:sing}
    Denote by $S_F$ the finite set that consists of all points $a\in \Gamma$ satisfying that for each neighborhood $W$ of $a$, $\pi_z$ is not injective on $\Gamma\cap W$. The set of \emph{singular points} of $F$, denoted by $\sing$, is the set $\pi_z(S_F)$.
\end{definition} 
\begin{rem}
    Note that $\mathcal{CV}_{F^{-1}}\subseteq \sing$.
\end{rem}
\begin{rem}\label{remark:invlike} 
    If $F$ is invariantly inverse-like on a set $S$ and $x\in S$, then $\Lambda_+(x)\subseteq \overline S$ and if $S$ is closed, then $F$ is invariantly inverse-like on $\Lambda_+(x)$.
\end{rem}

The following definitions will be important. They are in line with the analogous definitions for holomorphic functions, see e.g., \cite{Beardon2000-ns}. 
\begin{definition}\label{def:parabolic}
 A point $z\in \setRS$ is an \emph{attracting} (resp.\ \emph{repelling}) periodic point (of $F$) if there exists an integer $q\geq 1$ and a branch $f_I$ of $F^q$ fixing $z$ such that $|Df_I(z)|<1$ (resp.\ $|Df_I(z)|>1$). Similarly, a point $z\in \setRS$ is a \emph{parabolic} periodic point of $F$ if there exists an integer $q\geq 1$ and a branch $f_I$ of $F^q$ defined in a neighborhood of $z$ such that $f_I(z)=z$ and $Df_I(z)$ is a root of unity or, equivalently, there exists an integer $n\geq 1$ such that $Df_I^n(z)=1$. If there exists an integer $q\geq 1$ such that $z\in F^q(z)$, the minimal such integer is the \emph{period} of $z$. We denote by $\Omega_+(x)$ the set of all parabolic periodic points of $F$ in $\Lambda_+(x)$. 
 \end{definition}

Next, we introduce a technical assumption on $F$ and $\Lambda_+(x)$.

\begin{definition}\label{def:Omega-attracting}
We say that $F$ is \emph{locally $\Omega_+(x)$-attracting} on $\Lambda_+(x)$ if for each $\omega\in \Omega_+(x)$, and each branch $T_{\omega,j}$ of $F^q$ fixing $\omega$ for some $q\geq 1$, there exists a pinched neighborhood $U$ of $\omega$ with the following properties:
\begin{enumerate}
    \smallskip \item\label{it:locally1} For all sufficiently small neighborhoods $V$ of $\omega$, $\Lambda_+(x)\cap V=\Lambda_+(x)\cap V\cap U$. 
    \smallskip \item\label{it:locally2} $T^n_{\omega,j}$ is defined on $U$ for each $n\geq 0$ and for each $z\in U$, $T^n_{\omega,j}(z)\to \omega$ as $n\to +\infty$. 
\end{enumerate}
\end{definition}
Here, by a \emph{pinched neighborhood} of $\omega$ we mean the closure of an open set consisting of finitely many connected components such that the closure of each connected component contains $\omega$. Note that if $\Lambda_+(x)$ is locally $\Omega_+(x)$-attracting, then no iterates of $T_{\omega,j}$ equal the identity map on the set $U$.
\begin{ex}\label{ex:example}
Let $F(z)\coloneqq R^{-1}(z)$, where $R\coloneqq z^2+1/4$. Then the point $\omega=1/2$ is a parabolic fixed point of $F$ and the set $U$ in Definition~\ref{def:Omega-attracting} may be taken as the interior of a circle sector, see Figure~\ref{fig:Uneighborhood}.   
    \begin{figure}
        \centering
        \includegraphics[scale=0.6]{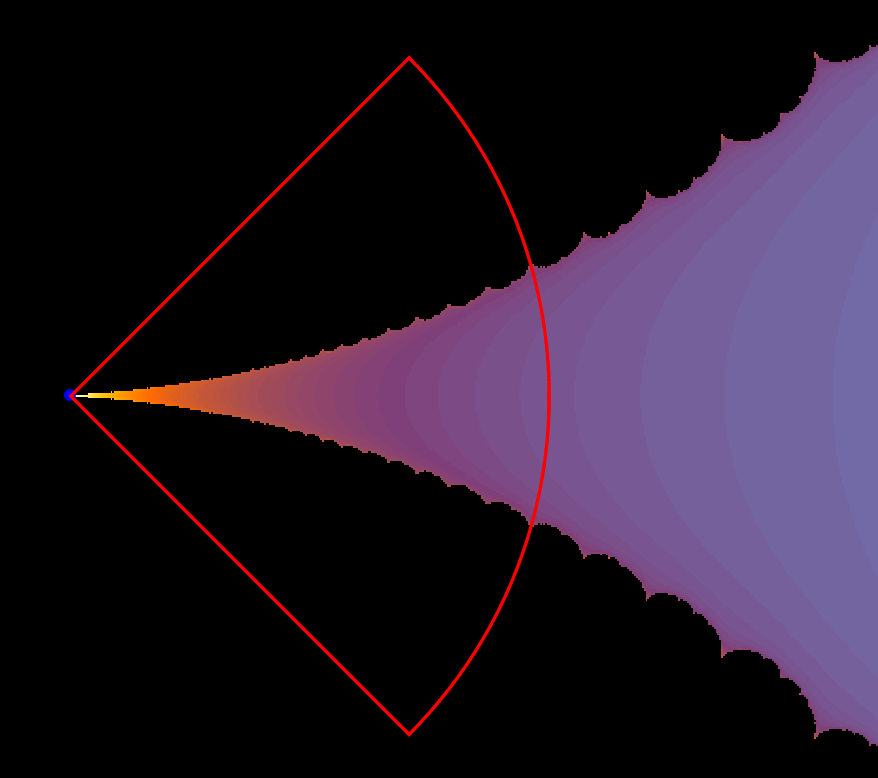}
        \caption{Zoomed-in picture of $\Lambda_+(x)$ near the parabolic fixed point $\omega=1/2$, indicated by a blue dot, for $F(z)\coloneqq R^{-1}(z)$ where $R\coloneqq z^2+1/4$. As long as $x\neq \infty$, $\Lambda_+(x)$ equals the Julia set of $R$, which is the common boundary of the black and the colored regions.  The set $U$ in Definition~\ref{def:Omega-attracting} can be taken as the interior of the red circle sector, provided that it is chosen sufficiently small.}
        \label{fig:Uneighborhood}
    \end{figure}
\end{ex}

\begin{definition}\label{def:wellbehaved}   
Suppose that $F$ is invariantly inverse-like on $\Lambda_+(x)$. We say that $F$ is \emph{relatively hyperbolic} on $\Lambda_+(x)$ if $F$ satisfies the following assumptions:    
\begin{enumerate}
    \smallskip \item\label{it:rel1}  $\Lambda_+(x)\cap \mathcal{CV}_{F^{-1}}=\emptyset$.
    \smallskip \item\label{it:attracting}  $F$ is  locally $\Omega_+(x)$-attracting on $\Lambda_+(x)$.
    \smallskip \item\label{it:thirdrel} $\Lambda_+(x)\cap \mathcal{PC}_{F^{-1}}\subseteq \Omega_+(x)$. 
 \end{enumerate}
  If $\Omega_+(x)=\emptyset$, $\Lambda_+(x)\cap \mathcal{CV}_{F^{-1}}=\emptyset$, and $\Lambda_+(x)\cap \mathcal{PC}_{F^{-1}}= \emptyset$, we say that $F$ is \emph{hyperbolic} on $\Lambda_+(x)$. 
 \end{definition}

 \begin{rem}
Note that if $F$ is hyperbolic on $\Lambda_+(x)$, then $F$ is relatively hyperbolic on $\Lambda_+(x)$. 
 \end{rem}
 \begin{ex}
          Let $R\:\setRS\to \setRS$ be a rational map and $F=R^{-1}$. Take $x\in \setRS$ such that it is not an attracting periodic point of $R$ and $x$ does not belong to a Siegel disk or Herman Ring, then $\Lambda_+(x)=\mathcal J(R)$, where  $\mathcal J(R)$ is the Julia set of $R$. If $R$ is expansive on $\mathcal J(R)$ then $F$ is relatively hyperbolic on $\mathcal J(R)$. If $R$ is expanding on $\mathcal J(R)$, then $F$ is hyperbolic on $J(R)$ (see e.g., \cite[Chapter~19]{Mishasbook} for the definition of expanding and expansive rational maps).
 \end{ex}

Recall that if $F\:z\to w$ is invariantly inverse-like on $S$, we define the map $g_{F,S}\:F(S)\to S$ by $g_{F,S}(w)\coloneqq z$. Note that $F=g_{F,S}^{-1}$ on $S$.

 The following definition will be important for our results. In essence, it gives us good control of the branches of $F^n(z)$ for each $n\geq 0$ and $z\in \Lambda_+(x)\smallsetminus \Omega_+(x)$.
\begin{definition}\label{def:minimal}
    Suppose that $\Lambda_+(x)$ is a forward limit set on which $F$ is invariantly inverse-like. We say that $\Lambda_+(x)$ is \emph{minimal} if the following conditions hold:
    \begin{enumerate}
        \smallskip \item\label{it:min1} $ \Lambda_+(x)$ has empty interior, $\Lambda_+(x)$ has no proper closed subset which is forward invariant, and $\Lambda_+(x)\cap(\sing\smallsetminus\mathcal{CV}_{F^{-1}})=\emptyset$.
       \smallskip  \item\label{it:U} For each $z\in \Lambda_+(x)\smallsetminus \Omega_+(x)$, there exists a neighborhood $U_z\subseteq \setRS$ of $z$ such that for each $n\geq 0$, $g_{F,\Lambda_+(x)}^n$ extends to a map (still denoted $g_{F,\Lambda_+(x)}^n$) on $F^n(U_z\cup \Lambda_+(x))$ so that $g_{F,\Lambda_+(x)}^n(F^n(y))=\{y\}$, $\Lambda_+(y)\subseteq \Lambda_+(x)$ for each $y\in U_z$, and there are at least three points in $\setRS$ that do not belong to $\bigcup_{n=0}^{+\infty} F^{n}(U_z)$.
        \smallskip \item\label{it:countable}
        $\Omega_+(x)$ is finite. 
    \end{enumerate}
\end{definition}

 \begin{rem}
     Note that in Definitions~\ref{def:Omega-attracting}, \ref{def:wellbehaved}, and \ref{def:minimal}, we do not require that $\Omega_+(x)$ is nonempty. 
\end{rem}
\begin{rem}\label{remark:suff}
            If $F$ is invariantly inverse-like on $\Lambda_+(x)$ and there exists a pinched neighborhood $U\subsetneq\setRS$ of $\Lambda_+(x)$,
           pinched at $\Omega_+(x)$,
           such that $F(U)\subseteq U$, $g_{F,\Lambda_+(x)}$ extends to a map on $F( U)\cup \Lambda_+(x)$ so that for each $y\in U$, $g_{F,\Lambda_+(x)}(F(y))=\{y\}$, and for each $y\in U$, except maybe a finite set of $y$, we have $\Lambda_+(y)\subseteq \Lambda_+(x)$.  
           then condition~(\ref{it:U}) holds. This will be useful in Section~\ref{sec:bullett}. By a \emph{pinched neighborhood} $U\subseteq \setRS$ of $\Lambda_+(x)$, pinched at $\Omega_+(x)$, we mean the closure of an open set $V $, such that $V$ has finitely many connected components, $V$ contains $\Lambda_+(x)\smallsetminus \Omega_+(x)$ but no points in $\Omega_+(x)$, and $U=\overline V$ contains $\Lambda_+(x).$
\end{rem}
\begin{rem}\label{re:1}
    If $F$ is invariantly inverse-like on $\Lambda_+(x)$ and $\Lambda_+(x)$ is minimal, it follows that if $F(y)\cap \Lambda_+(x)\neq \emptyset$ for some $y\in U_z$ as in condition~(\ref{it:U}), then $y\in \Lambda_+(x)$. 
\end{rem}

\section{Examples of minimal forward limit sets on which the correspondence is relatively hyperbolic}\label{sec:ex}
In this section, we show that many of  the LLMM correspondences that are matings between antirational maps and Hecke groups, as well as many of the Bullett--Penrose correspondences have minimal forward limit sets on which the correspondence is relatively hyperbolic. 

One can easily see that if $R$ is a hyperbolic or parabolic rational map, then $F(z)\coloneqq R^{-1}(z)$ is invariantly inverse-like on the Julia set $J(R)$, which is a minimal forward limit set $\Lambda_+(x)$ for all except finitely many $x$. We focus here instead on correspondences that are not inverses of rational maps. We shall in particular, assuming Theorem~\ref{thm:existencemmeasure}, conclude Theorems~\ref{thm:misha} and~\ref{thm:BP}.

 \subsection{Forward limit sets of LLMM correspondences}\label{sec:misha}
In this subsection, we suppose that $f$ is a rational map of degree $d+1$ that is univalent on $\overline{\mathbb{D}}$. Denote by $\eta$ the reflection in the unit circle, i.e., $\eta (z) \coloneqq  1/\overline{z}$ for all $z \in \widehat{\mathbb{C}}$. The $d$-to-$d$ LLMM correspondence $F\:\setRS\to \setRS$ associated to $f$ is given by
\begin{equation}\label{def:lyubichmukherjeecorrespondence}
    F(z) \coloneqq  \biggl\{ w \in \widehat{\mathbb{C}} : \frac{f(w) -f(\eta (z))}{w -\eta (z)} =0 \biggr\}
\end{equation}
for all $z \in \widehat{\mathbb{C}}$. A version of these correspondences was introduced in \cite{Lee_2021} and mating results about them were obtained in \cite{lyubich2023antiholomorphic}.

The structure of this subsection is as follows. We first provide some preliminary results about $F$, including results from \cite{lyubich2023antiholomorphic}. We will then choose a certain class of LLMM correspondences, namely those defined by $f\in \mathcal M$, see Definition~\ref{def:finM}, and verify Definitions~\ref{def:wellbehaved} and \ref{def:minimal}. Lastly, we introduce an extra assumption, regarding the existence of attracting periodic points of $F^{-1}$ in a certain subset of $\setRS$, allowing us to apply Theorem~\ref{thm:existencemmeasure} to conclude Theorem~\ref{thm:misha}. This is the main result of this subsection. 

So, as indicated above, let us now introduce some preliminary results about $F$.
As $f$ is injective on $\overline{\mathbb{D}}$, $F(\mathbb{D}^*)\subseteq \mathbb{D}^*$. Take also a point $z\in \partial \mathbb{D}$ not equal to a critical point of $f$. Then $F(z)\cap \partial \mathbb{D}=\emptyset$, so the polynomial defining $F$ has no irreducible factors of the form $(z-aw)$ or $(zw-a)$, with $a$ being a root of unity. 
We define the Schwartz reflection and relate it to the correspondence $F$ following \cite[Section~2]{lyubich2023antiholomorphic}. We define $U \coloneqq f(\mathbb{D})$ and the Schwartz refection $\sigma \colon \overline{U} \rightarrow \widehat{\mathbb{C}}$ as $\sigma \coloneqq  f \circ \eta \circ (f|_{\overline{\mathbb{D}}})^{-1}$. Denote $\mathbb{D}^* \coloneqq  \widehat{\mathbb{C}} \smallsetminus \overline{\mathbb{D}}$. It is not difficult to check the following relation between $F$ and $\sigma$:
\begin{enumerate}
    \smallskip
    \item For $z \in \overline{\mathbb{D}}$, with $\eta(z)$ not a critical point of $f$, we have $w \in F(z)$ if and only if $w \neq \eta (z)$ and $f(w) =\sigma (f(z))$. If $\eta(z)$ is a critical point of $f$, then $w\in F(z)$ if and only if $f(w) =\sigma (f(z))$.
    \smallskip 
    \item For $z \in \mathbb{D}^*$, we have that $w \in F (z)$ only if $\sigma (f(w)) =f(z)$.
\end{enumerate}


Denote by $P_1$ the set of critical points of $f$ on $\partial \mathbb{D}$. For $z \in P_1$, we call $f(z)$ a \emph{cusp} of $\partial U$. 
We denote by $S(\sigma)$ the set of all cusps of $\partial U$. Then $f^{-1} (S(\sigma)) \cap \partial \mathbb{D} =P_1$.
We then set $T^0 (\sigma) \coloneqq  \widehat{\mathbb{C}} \smallsetminus (U \cup S(\sigma))$ and
\begin{equation*}
    T^\infty (\sigma) \coloneqq  \bigcup_{n = 0}^{+\infty} \sigma^{-n} \bigl( T^0 (\sigma) \bigr).
\end{equation*}
We call $T^\infty (\sigma)$ the \emph{tiling set} of $\sigma$. The \emph{non-escaping set} of $\sigma$ is given by $K(\sigma) \coloneqq  \widehat{\mathbb{C}} \smallsetminus T^\infty (\sigma)$. By \cite[Proposition~2.2]{lyubich2023antiholomorphic}, $T^\infty (\sigma)$ is open and $K(\sigma)$ is closed in $\widehat{\mathbb{C}}$. 

For $z \in \partial U$, we have $\sigma (z) =z$. Consequently, for $z \in S(\sigma)\ (\subseteq \partial U)$, we have $\sigma^n (z) \notin T^0 (\sigma)$ for all $n \geq 0$, so $z \notin T^\infty (\sigma)$. Thus, $S(\sigma) \subseteq K(\sigma)$. 
Since $\partial U \smallsetminus S(\sigma) \subseteq T^0 (\sigma) \subseteq T^\infty (\sigma)$, we have $S(\sigma) =K(\sigma) \cap \partial U$.

We write $\widetilde{T^\infty (\sigma)} \coloneqq  f^{-1} (T^\infty (\sigma))$, $\widetilde{K (\sigma)} \coloneqq  f^{-1} (K (\sigma))$, $K_+ \coloneqq  \widetilde{K (\sigma)} \cap \overline{\mathbb{D}^*}$, and $K_- \coloneqq  \widetilde{K (\sigma)} \cap \overline{\mathbb{D}}$. 
Recall $S(\sigma) =K(\sigma) \cap \partial U$, $f(\partial \mathbb{D}) =\partial U$, and $f^{-1} (S(\sigma)) \cap \partial \mathbb{D} =P_1$. Then $\widetilde{K(\sigma)} \cap \partial \mathbb{D} =P_1$, so $K_+ \cap \partial \mathbb{D} =K_- \cap \partial \mathbb{D} =P_1$. 
We set $\Lambda_+ \coloneqq  \partial K_+$ and $\Lambda_- \coloneqq  \partial K_-$. 
Since $P_1$ is finite, and thus is discrete, we also have $\Lambda_+ =\partial \widetilde{K(\sigma)} \cap \overline{\mathbb{D}^*}$ and $\Lambda_- =\partial \widetilde{K(\sigma)} \cap \overline{\mathbb{D}}$.
Since $K_+ \subseteq \overline{\mathbb{D}^*}$ and $K_+ \cap \partial \mathbb{D} =P_1$, we have $P_1 \subseteq \Lambda_+$. We let $\Omega_+$ denote the set of all periodic parabolic points of $F$ on $\Lambda_+$. 




Next, denote $W \coloneqq  \overline{f^{-1} (U) \smallsetminus \mathbb{D}}$ and note that $W \cap \partial \mathbb{D} =P_1$. For $w \in W$, there exists exactly one point $w' \in \overline{\mathbb{D}}$ with $f(w') =f(w)$, so there exists exactly one $z_0 (=\eta (w')) \in \overline{\mathbb{D}^*}$ with $f(w) =f(\eta (z_0))$. As $\eta(z_0)\neq w$ if $z_0\notin P_1$, it follows that $w\in F(z_0)$. Moreover, $z_0$ is the only point in $\overline{\mathbb{D}^*}$ with $w \in F(z_0)$, i.e., $F^{-1} (w) \cap \overline{\mathbb{D}^*} = \{z_0\}$. 
 This allows us to define an antiholomorphic map $g \colon W \rightarrow \overline{\mathbb{D}^*}$ such that $F^{-1} (w) \cap \overline{\mathbb{D}^*} =\{g(w)\}$ for all $w \in W$, i.e., $g$ is the only inverse branch of $F$ from $W$ to $\overline{\mathbb{D}^*}$.
In particular, $g(w) =w$ for all $w \in P_1$.
Moreover, by definition of $\sigma$, we have
\begin{equation}\label{sigmafz=fgz}
    \sigma (f(z)) =f(g(z))
\end{equation}
for all $z \in \mathbb{D}^* \cup P_1$, such that both sides of (\ref{sigmafz=fgz}) are defined. That is, $\sigma$ is defined on $\overline{U}$ and $g$ is defined on $W$ and the equality~(\ref{sigmafz=fgz}) means that the two sides of (\ref{sigmafz=fgz}) are defined simultaneously, and if they are defined, then they are equal. 
For $z \in (\mathbb{D}^* \cup P_1) \smallsetminus W$, suppose that $w \in F^{-1} (z) \cap \overline{\mathbb{D}^*}$, then $f(z) =f(\eta (w))$. Since $f^{-1} (\overline{U}) =\overline{D} \cup W$, we have $f(z) \notin \overline{U}$, but $f(\eta (w)) \in f(\overline{\mathbb{D}}) =\overline{U}$. This contradicts $f(z) =f(\eta (w))$, so $F^{-1} (z) \cap \overline{\mathbb{D}^*}$ is empty. Additionally, for $z \in \overline{ \mathbb{D}} \smallsetminus P_1$, if $w \in F^{-1} (z)$, i.e., $z \in F(w)$, then $f(z) =f(\eta (w))$ and $z \neq \eta (w)$. Consequently, we have $\eta (w) \in \mathbb{D}^*$ because $f$ is injective on $\overline{\mathbb{D}}$. Thus, $w \in \mathbb{D}$ and $F^{-1} (z) \cap \overline{\mathbb{D}^*} =\emptyset$. Therefore, $F^{-1} (z) \cap \overline{\mathbb{D}^*}$ is empty for all $z \in \widehat{\mathbb{C}} \smallsetminus W$.

For $z \in \mathbb{D}^* \cup P_1$, we have $f(z) \in K(\sigma)$ if and only if $\sigma^n (f(z))$ is defined for all $n \in \bN$. The equality (\ref{sigmafz=fgz}) implies 
\begin{equation}\label{eq:bothsides}
    \sigma^n (f(z)) =f(g^n (z))
\end{equation} for each $z \in \mathbb{D}^* \cup P_1$ such that both sides of \eqref{eq:bothsides} are defined. Consequently, for $z \in \widehat{\mathbb{C}}$, we have $z \in K_+$ if and only if $g^n (z)$ is defined and $g^n (z) \in W$ for all $n \in \bN$. This also shows that $g^n(z)\in K_+$ for all $n\in \bN$.

We summarize the arguments above into the following useful lemma.

\begin{lemma}\label{pandingK+}
    For each $z \in \widehat{\mathbb{C}}$, the following statements hold:
    \begin{enumerate}
        \smallskip\item\label{it:le641} $g(z)$ is defined if and only if $F^{-1} (z) \cap \overline{\mathbb{D}^*}$ is nonempty. Moreover, if $g(z)$ is defined, then $F^{-1} (z) \cap \overline{\mathbb{D}^*} =\{ g(z) \}$.
        \smallskip \item\label{it:le642} $z \in K_+$ if and only if $g^n (z)$ is defined for all $n \in \bN$.
    \end{enumerate}
\end{lemma}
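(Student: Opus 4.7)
The lemma is essentially a clean consolidation of computations already carried out in the preceding paragraphs of the subsection, so the proof is largely organizational; no new technical ingredient is required beyond a small observation about the forward $\sigma$-invariance of $K(\sigma)$.

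For (\ref{it:le641}), I would split into the two implications. The forward direction is immediate from the definition of $g$: if $g(z)$ is defined then by construction $z\in W$ and $F^{-1}(z)\cap\overline{\mathbb{D}^*}=\{g(z)\}$, which is nonempty and already gives the uniqueness clause. For the converse I would appeal to the case analysis carried out in the paragraph beginning ``For $z \in (\mathbb{D}^* \cup P_1) \smallsetminus W$'', which together with the subsequent case $z\in\overline{\mathbb{D}}\smallsetminus P_1$ shows that $F^{-1}(z) \cap \overline{\mathbb{D}^*} =\emptyset$ whenever $z \in \widehat{\mathbb{C}} \smallsetminus W$. Contrapositively, if $F^{-1}(z) \cap \overline{\mathbb{D}^*}$ is nonempty then $z\in W$, so $g(z)$ is defined.

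For (\ref{it:le642}) my plan is an induction based on the conjugacy (\ref{sigmafz=fgz}). First I would record the intermediate fact that $K(\sigma)$ is forward $\sigma$-invariant: if $w \in K(\sigma)$ but $\sigma(w) \in T^\infty (\sigma)$, then $\sigma^{k+1}(w) \in T^0(\sigma)$ for some $k \ge 0$, whence $w \in \sigma^{-(k+1)}(T^0(\sigma)) \subseteq T^\infty(\sigma)$, a contradiction. Now suppose $z \in K_+$. Then $z \in \overline{\mathbb{D}^*}$ and $f(z) \in K(\sigma)$, so in particular $\sigma(f(z))$ is defined, which puts $f(z) \in \overline{U}$ and hence $z \in f^{-1}(\overline{U}) \cap \overline{\mathbb{D}^*} = W$. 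Thus $g(z)$ is defined; moreover $f(g(z)) = \sigma(f(z)) \in K(\sigma)$ by the forward invariance just shown, so $g(z) \in K_+$. An induction on $n\in\bN$ then produces the entire orbit $g^n(z)$.

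For the converse in (\ref{it:le642}), if $g^n(z)$ is defined for every $n \in \bN$, then already $z\in W\subseteq \overline{\mathbb{D}^*}$, and each $g^n(z) \in W \subseteq f^{-1}(\overline{U})$, so applying (\ref{sigmafz=fgz}) inductively yields that $\sigma^n (f(z)) = f(g^n(z))$ is defined for every $n$. Hence $f(z) \in K(\sigma)$ and therefore $z\in K_+$. The only step I expect to need to spell out carefully is the forward $\sigma$-invariance of $K(\sigma)$, which is the sole piece not explicitly recorded earlier; as above, it is a direct consequence of the definition of $T^\infty(\sigma)$, so I do not anticipate a genuine obstacle.
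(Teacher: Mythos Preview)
Your proposal is correct and essentially identical to the paper's treatment, which explicitly presents the lemma as a summary of the preceding paragraphs; your organization of those arguments (the case analysis for (\ref{it:le641}) and the conjugacy \eqref{sigmafz=fgz} for (\ref{it:le642})) is faithful. One tiny imprecision: the set equality $f^{-1}(\overline{U})\cap\overline{\mathbb{D}^*}=W$ fails on $\partial\mathbb{D}\smallsetminus P_1$, but since $K_+\cap\partial\mathbb{D}=P_1$ the desired conclusion $z\in W$ still follows.
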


Moreover, we claim
\begin{equation}\label{sigmafetaz=fetagz}
    \sigma (f (\eta (z))) =f ( \eta (g(z)))
\end{equation}
for all $z \in W$. Indeed, if $z \in W$, then $f(z) =f(\eta (g(z)))$. Since $\eta (z) \in \overline{\mathbb{D}}$, by the definition of $\sigma$, we have $\sigma (f(\eta (z))) =f(\eta (\eta (z))) =f(z)$, so \eqref{sigmafetaz=fetagz} holds and the claim is established. Since $\eta (W) \subseteq \overline{\mathbb{D}}$ and $f$ is injective on $\overline{\mathbb{D}}$, this claim implies that $g$ on $W$ and $\sigma$ on $f(\eta (W))$ are anti-conformally conjugate via the map $f \circ \eta$.

Recall from \cite[Proposition~2.4]{lyubich2023antiholomorphic} that $\widetilde{K(\sigma)} =K_+ \cup K_-$ is invariant under both $F$ and $F^{-1}$, so $\Lambda_+ \cup \Lambda_- =\partial \widetilde{K(\sigma)}$ is also invariant under both $F$ and $F^{-1}$. We can moreover summarize from \cite[Proposition~2.4]{lyubich2023antiholomorphic}, together with some simple calculations, the following information about the dynamics of $F$ and $F^{-1}$ on $\widetilde{K(\sigma)}$:
\begin{equation}\label{gLambda_+=Lambda_+}
    \quad g(K_+) =K_+,  \quad g (\Lambda_+) =\Lambda_+,
\end{equation}
\begin{equation}\label{F^-1zsubseteqg(z)cupK-}
    F^{-1} (z) \subseteq g(z) \cup (K_ -\smallsetminus P_1) \text{ for each } z \in K_+,
\end{equation}
\begin{equation}\label{F^-1zsubseteqg(z)cupLambda-}
    F^{-1} (z) \subseteq g(z) \cup (\Lambda_-\smallsetminus P_1) \text{ for each } z \in \Lambda_+,
\end{equation}
\begin{equation}\label{F^-1(K-)=K-F^-1(Lambda-)=Lambda-,F(K+)=K+,F(Lambda+)=Lambda+}
    F^{-1} (K_-) =K_-,\quad F^{-1} (\Lambda_-) =\Lambda_-,\quad F (K_+) =K_+,\quad F (\Lambda_+) =\Lambda_+.
\end{equation}

Additionally, \cite[Proposition~2.4]{lyubich2023antiholomorphic} also implies
\begin{equation}\label{etaK_+=K_-}
    \eta (K_+) =K_- \quad \text{and} \quad \eta (\Lambda_+) =\Lambda_-.
\end{equation}
It now follows by \eqref{gLambda_+=Lambda_+}, \eqref{F^-1zsubseteqg(z)cupK-}, and the last two equations of \eqref{F^-1(K-)=K-F^-1(Lambda-)=Lambda-,F(K+)=K+,F(Lambda+)=Lambda+}, that $F$ is invariantly inverse-like on $K_+$ and on $\Lambda_+$.

Recall from \cite[Section~1]{lyubich2023antiholomorphic} that an antirational map $R$ of degree strictly greater than 1 is called \emph{Bers-like} if it has a simply connected completely invariant Fatou component. We will assume that a Bers-like antirational map is equipped with a marked simply connected completely invariant Fatou component $\mathcal{B} (R)$ (in case there are more than one). A \emph{polygon} is a Jordan domain whose boundary consists of finitely many smooth arcs. We now define the class of rational maps $f$ which we will be working with. 




\begin{definition}\label{def:finM}
    We define $\mathcal{M}$ as the class of all degree $d+1$, with $d\geq 1$, rational maps $f \colon \widehat{\mathbb{C}} \rightarrow \widehat{\mathbb{C}}$ that are univalent on $\overline{\mathbb{D}}$, with the following properties:
    \begin{enumerate}
        \smallskip
        \item\label{it:defM1} $T^\infty (\sigma)$ is a simply connected domain either containing no critical value of $f$ or containing exactly one critical value $v_0 \in T^0 (\sigma)$ of $f$, with $f^{-1} (v_0)$ a singleton.
        \smallskip
        \item\label{it:defM2} There exists a Bers-like antirational map $R$ with no Fatou component being a Siegel disk or a Herman ring 
        and a forward invariant Jordan domain $D \subsetneq \mathcal{B} (R)$ such that 
        $V \coloneqq  \widehat{\mathbb{C}} \smallsetminus \overline{D}$ is a polygon.  Let $\nu (V)$ denote the set of corners of $V$. Furthermore, there exists a homeomorphism $\Phi$ from a pinched neighborhood of $K(\sigma)$, pinched at $S(\sigma) \cup \sigma^{-1} (S(\sigma))$ to a pinched neighborhood of $\widehat{\mathbb{C}} \smallsetminus \mathcal{B} (R)$, pinched at $\nu (V) \cup R^{-1} (\nu (V))$ with the following properties:
        \begin{enumerate}
            \smallskip\item $\Phi (S(\sigma)) =\nu (V)$.
            \smallskip\item $\Phi (K(\sigma)) =\widehat{\mathbb{C}} \smallsetminus \mathcal{B} (R)$.
            \smallskip\item $\Phi$ conjugates $\sigma$ to $R$ on its domain.
        \end{enumerate}
        \smallskip
        \item\label{it:defM3} If $z \in \Lambda_+$ is a critical point of $f$, then $z \in P_1$.
    \end{enumerate}
\end{definition}

Then, by \cite[Propositions~2.15 and~2.19]{lyubich2023antiholomorphic}, if $f \in \mathcal{M}$, and the homeomorphism $\Phi$ in Definition~\ref{def:finM}~(\ref{it:defM2}) is quasiconformal (resp.\ David) (see for example, \cite[Definition~3.1]{lyubich2023antiholomorphic}), and antiholomorphic a.e.\ on $K(\sigma)$, then the correspondence $F$ given by (\ref{def:lyubichmukherjeecorrespondence}) is a quasiconformal (resp.\ David) mating of $R$ and the group $(\mathbb{Z}/2\mathbb{Z}) * (\mathbb{Z}/(d+1)\mathbb{Z})$. We will now establish some basic properties and then check Definitions~\ref{def:Omega-attracting},~\ref{def:wellbehaved}, and~\ref{def:minimal} for $F$ when $f \in \mathcal{M}$. First, we have that $\Lambda_+$ is a forward limit set.

\begin{lemma}\label{le:islimitset}
Let $f\in \mathcal M$.
    For all but finitely many $x\in \operatorname{int}(K_+)$, we have $\Lambda_+(x)=\Lambda_+$.
\end{lemma}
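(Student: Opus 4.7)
The plan is to reduce the problem via the conjugacy $\Phi$ of Definition~\ref{def:finM}(ii) to classical Julia-set theory for the antirational map $R$. Set $\Psi \coloneqq \Phi \circ f$, which is a continuous, open, generically $d$-to-$1$ map from $K_+$ onto $\widehat{\mathbb{C}} \smallsetminus \mathcal{B}(R)$ (since $\Phi$ is a homeomorphism and $f$ restricted to $\overline{\mathbb{D}^*}$ has degree $d$). Combining $\Phi \circ \sigma = R \circ \Phi$ with the identity \eqref{sigmafz=fgz} yields the semi-conjugation $R \circ \Psi = \Psi \circ g$. Two observations follow from the computation $\sigma(f(\eta(z))) = f(z)$ for $z \in \overline{\mathbb{D}^*}$: for every $z \in K_+$, the image $F(z) = f^{-1}(f(\eta(z))) \cap \overline{\mathbb{D}^*}$ coincides with the full $\Psi$-fiber over $\Phi(f(\eta(z)))$; and the assignment $w \mapsto \Phi(f(\eta(w)))$ is a bijection from any $\Psi$-fiber over $y$ onto the full $R$-preimage $R^{-1}(y)$.

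A straightforward induction from these two observations will give
\[
\Psi\bigl(F^n(x)\bigr) \,=\, R^{-(n-1)}(y_0) \qquad \text{with } y_0 \coloneqq \Phi(f(\eta(x))),
\]
for every $n \geq 1$, and moreover $F^n(x)$ is a disjoint union of $d^{n-1}$ full $\Psi$-fibers. Next I will invoke the classical fact from (anti)holomorphic dynamics that for an antirational map $R$ of degree at least two with no Siegel disks and no Herman rings, there is a finite exceptional set $E_R \subseteq \widehat{\mathbb{C}}$ such that for every $y_0 \notin E_R$, the accumulation set of $\bigcup_{n \geq 0} R^{-n}(y_0)$ equals $\mathcal{J}(R)$; since $R$ is Bers-like with completely invariant Fatou component $\mathcal{B}(R)$, we also have $\mathcal{J}(R) = \partial \mathcal{B}(R)$.

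Pulling back via $\Psi$ completes the argument. Because $f$ is open, unraveling Definition~\ref{def:finM}(ii) gives $\Psi^{-1}(\mathcal{J}(R)) \cap K_+ = f^{-1}(\partial K(\sigma)) \cap \overline{\mathbb{D}^*} = \partial \widetilde{K(\sigma)} \cap \overline{\mathbb{D}^*} = \Lambda_+$. The inclusion $\Lambda_+(x) \subseteq \Lambda_+$ then follows from continuity of $\Psi$. For the reverse inclusion, the crucial point is that each $F^n(x)$ is a union of full $\Psi$-fibers: whenever $\xi \in \mathcal{J}(R)$ and $\zeta_n \in \Psi(F^n(x))$ with $\zeta_n \to \xi$, continuity of $\Psi$ as a branched covering forces the full fibers $\Psi^{-1}(\zeta_n) \cap K_+ \subseteq F^n(x)$ to converge (as sets) to $\Psi^{-1}(\xi) \cap K_+$, placing every point of the latter fiber in $\Lambda_+(x)$. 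The exceptional set of $x$ for which $y_0 \in E_R$ is finite since the map $x \mapsto \Phi(f(\eta(x)))$ is finite-to-one.

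The main obstacle I anticipate is establishing the inductive identity $\Psi(F^n(x)) = R^{-(n-1)}(y_0)$ together with the tracking that $F^n(x)$ is a union of full $\Psi$-fibers, and ensuring fiber continuity holds at the finitely many critical values of $f$ lying in $\Lambda_+$ (which by Definition~\ref{def:finM}(iii) are contained in $P_1$, hence finite). Since these contribute only finitely many additional exceptional points, they can be absorbed into the finite exceptional set without weakening the conclusion.
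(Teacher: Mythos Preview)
Your approach is correct in outline but takes a detour that the paper avoids. You work with the $d$-to-$1$ semi-conjugacy $\Psi = \Phi \circ f$ and then track full $\Psi$-fibers through the iteration; this forces you into the inductive identity $\Psi(F^n(x)) = R^{-(n-1)}(y_0)$, the bijection in your ``claim~3'', and a fiber-continuity argument for the reverse inclusion. By contrast, the paper observes that the map $\chi \coloneqq \Phi \circ f \circ \eta$ is a \emph{homeomorphism} from $K_+$ onto $\widehat{\mathbb{C}} \smallsetminus \mathcal{B}(R)$: indeed $\eta$ is a homeomorphism $K_+ \to K_-$, $f|_{\overline{\mathbb{D}}}$ is univalent (so $f|_{K_-}$ is a homeomorphism onto $K(\sigma)$), and $\Phi$ is a homeomorphism by Definition~\ref{def:finM}(\ref{it:defM2}). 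Equation~\eqref{sigmafetaz=fetagz} then gives $R \circ \chi = \chi \circ g$, a genuine topological conjugacy. Since $F = g^{-1}$ on $K_+$, this immediately yields $\chi(\Lambda_+(x)) = \bigcap_n \overline{\bigcup_{k \geq n} R^{-k}(\chi(x))}$, and the classical fact for $R$ (no Siegel disks, no Herman rings) identifies the right-hand side with $J(R) = \chi(\Lambda_+)$ whenever $\chi(x)$ is not an attracting periodic point of $R$. No fiber-tracking is needed.

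It is worth noting that your ``claim~3'' --- that $w \mapsto \Phi(f(\eta(w)))$ bijects a $\Psi$-fiber onto an $R$-preimage --- is precisely the statement that $\chi$ is a bijective conjugacy, so you are one observation away from the short proof. Your fiber-continuity step (``branched covering'' on the closed set $K_+$) would need some care near $P_1$, where preimages can slide across $\partial\mathbb{D}$; it can be made rigorous, but recognising $\chi$ as a homeomorphism sidesteps the issue entirely.
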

\begin{proof}\label{le:limitset}
Take $y\in \setRS\smallsetminus \mathcal{B} (R)$ not equal to an attracting periodic point of $R$ and recall that any rational map has only finitely many attracting periodic points. Then $\bigcap _{n=0}^{+\infty} \overline{\bigcup_{k=n}^{+\infty} R^{-k}(x)}=J(R)$. Since $\Phi \circ f \circ \eta$ from $K_+$ to $\widehat{\mathbb{C}} \smallsetminus \mathcal{B} (R)$ conjugates $g$ to $R$, and that $F =g^{-1}$ in $K_+$, it follows that if $x=(\Phi \circ f \circ \eta)^{-1}(y)$, then $\Lambda_+(x)=\Lambda_+$.
\end{proof}

Throughout the rest of this section, we suppose that we have chosen $x\in \setRS$ so that $\Lambda_+(x)=\Lambda_+$ and $\Omega_+=\Omega_+(x)$. 

We have the following lemma.

\begin{prop}\label{Omega=P_1}
    Let $f \in \mathcal{M}$. Then $P_1 \subseteq \Omega_+$. Moreover, for each $z_0 \in P_1$, the branch of $F$ fixing $z_0$ has the form $z \mapsto 2z_0 -\eta (z) +o(|z-z_0|)$ for $z$ close enough to $z_0$.
\end{prop}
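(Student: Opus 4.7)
The plan is to verify the two claims in turn. First, confirm $P_1 \subseteq \Lambda_+$; then, via an implicit function theorem argument, identify the unique branch of $F$ fixing each $z_0 \in P_1$ and show it has the asserted form with parabolic second iterate.

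For the inclusion $P_1 \subseteq \Lambda_+$: from the preliminaries we have $K_+ \subseteq \overline{\mathbb{D}^*}$ and $K_+ \cap \partial \mathbb{D} = P_1$. Thus any $z_0 \in P_1$ lies in $K_+$, while every neighborhood of $z_0 \in \partial \mathbb{D}$ also meets $\mathbb{D} \subseteq \setRS \smallsetminus K_+$. Hence $z_0 \in \partial K_+ = \Lambda_+$.

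For the local branch identification: fix $z_0 \in P_1$, so $\eta(z_0) = z_0$ and $f'(z_0) = 0$. Since $f$ is univalent on $\overline{\mathbb{D}}$ and the local normal form $\zeta \mapsto \zeta^k$ is injective on a closed half-plane only when $k = 2$, the critical point at $z_0$ is simple, so $f''(z_0) \neq 0$. Set $P(z, w) \coloneqq \bigl(f(w) - f(\eta(z))\bigr)/(w - \eta(z))$; expanding $f$ about $\eta(z)$ yields
\begin{equation*}
    P(z, w) = f'(\eta(z)) + \tfrac{1}{2} f''(\eta(z)) \bigl(w - \eta(z)\bigr) + O\bigl(|w - \eta(z)|^2\bigr),
\end{equation*}
so $P(z_0, z_0) = 0$ while $\partial_w P(z_0, z_0) = f''(z_0)/2 \neq 0$. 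The implicit function theorem in the antiholomorphic variable $z$ therefore produces a unique single-valued branch $f_I$ of $F$ on a neighborhood of $z_0$ with $f_I(z_0) = z_0$. Combining $P(z, f_I(z)) = 0$ to first order with the Wirtinger expansion $\eta(z) - z_0 = -z_0^2 \overline{(z - z_0)} + O(|z - z_0|^2)$ gives
\begin{equation*}
    f_I(z) = z_0 + z_0^2 \overline{(z - z_0)} + O(|z - z_0|^2) = 2 z_0 - \eta(z) + o(|z - z_0|),
\end{equation*}
the asserted form.

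For parabolicity: since $f_I$ is antiholomorphic, $f_I^2$ is holomorphic. Substituting the expansion of $f_I$ into itself and using $|z_0|^2 = 1$ yields
\begin{equation*}
    f_I^2(z) = z_0 + z_0^2 \, \overline{z_0^2} \, (z - z_0) + O(|z - z_0|^2) = z + O(|z - z_0|^2),
\end{equation*}
so $Df_I^2(z_0) = 1$. By Definition~\ref{def:parabolic}, $z_0$ is a parabolic fixed point of $F$, and together with the first step we conclude $P_1 \subseteq \Omega_+$.

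The main subtlety is checking that the branch produced by the implicit function theorem is a genuine branch of $F$ rather than the cancelled sheet $w = \eta(z)$. This is precisely why the division by $(w - \eta(z))$ in the definition of $F$ matters at critical $\eta(z_0)$: at $(z_0, z_0)$ the two local preimages of $f(z_0)$ under $f$ coalesce, and as $z$ moves off $z_0$ they separate symmetrically about $z_0$. The trivial sheet $w = \eta(z)$ is removed by the cancellation, and only its reflection $w = 2 z_0 - \eta(z) + o(|z - z_0|)$ survives, which is exactly the branch selected by the implicit function theorem.
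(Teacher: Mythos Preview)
Your proof is correct and follows essentially the same route as the paper: both apply the implicit function theorem to the divided-difference $h(x,y)=(f(y)-f(x))/(y-x)$ at $(\eta(z_0),z_0)=(z_0,z_0)$, use $h_x(z_0,z_0)=h_y(z_0,z_0)=f''(z_0)/2\neq 0$ to obtain the unique local branch with $\partial f_I/\partial(\eta(z))=-1$, and read off the asserted form. You add a few details the paper leaves implicit---the simplicity of the critical point from univalence on $\overline{\mathbb{D}}$, the explicit computation of $Df_I^2(z_0)=1$, and the remark on why the surviving sheet is not $w=\eta(z)$---but the core argument is the same.
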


\begin{proof}
    Fix an arbitrary $z_0 \in P_1$. Suppose
    \begin{equation}\label{f(y)-f(x) =(y-x)h(x,y)}
        f(y)-f(x) =(y-x)h(x,y),
    \end{equation}
    where $h$ is a rational map. We take partial derivatives on both sides of (\ref{f(y)-f(x) =(y-x)h(x,y)}) and then substitute $x=z_0$ and $y=z_0$, yielding $h (z_0, z_0)= f' (z_0) =0$. Next, taking twice the partial derivatives on both sides of (\ref{f(y)-f(x) =(y-x)h(x,y)}) and then substituting $x=z_0$ and $y=z_0$ gives $h_x (z_0, z_0)= f'' (z_0) /2 \neq 0$. The same argument shows $h_y (z_0, z_0) =f'' (z_0) /2 \neq 0$. By definition of $F$, each branch $f$ of $F$ satisfies $h(\eta (z), f (z)) =0$. Since $z_0 \in \partial \mathbb{D}$ is a quadratic critical point, there exists a unique branch $f$ of $F$ defined on a neighborhood of $z_0$ and fixing $z_0$. Moreover, we have $\frac{\partial f}{\partial \eta (z)} \big|_{z=z_0} =-\frac{h_x (z_0, z_0)}{h_y (z_0, z_0)} =-1$, by the implicit function theorem.  Therefore, $P_1 \subseteq \Omega_+(x)$ and $f (z) =2z_0 -\eta (z) +o(|z-z_0|)$ for $z$ close enough to $z_0$. 
    \end{proof}

We now show that $F$ is locally $\Omega_+(x)$-attracting on $\Lambda_+$.

\begin{prop}\label{locally attracting}
    If $f \in \mathcal{M}$, and $x$ is such that $\Lambda_+(x)=\Lambda_+$, then $F$ is locally $\Omega_+(x)$-attracting on $\Lambda_+(x)$.
\end{prop}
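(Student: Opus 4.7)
The plan is, for each $\omega \in \Omega_+(x)$ and each branch $T_{\omega,j}$ of $F^q$ fixing $\omega$, to construct the pinched neighborhood $U$ from the Leau flower of $T_{\omega,j}$ at $\omega$, and to verify the containment condition~(\ref{it:locally1}) by transporting the local picture to the antirational map $R$ from Definition~\ref{def:finM}~(\ref{it:defM2}).

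First, applying the Leau flower theorem directly to $T_{\omega,j}$: since $\omega$ is parabolic for $F^q$, some iterate $T_{\omega,j}^n$ has a local normal form $z\mapsto \omega + (z-\omega) + a(z-\omega)^{p+1}+\cdots$ (or its anti-holomorphic analogue) with $p = p(\omega)\geq 1$. Let $U$ be the union of the $p$ attracting petals of $T_{\omega,j}$ at $\omega$ together with $\{\omega\}$. This is a pinched neighborhood of $\omega$ on which $T_{\omega,j}^n$ is defined for all $n\geq 0$ and $T_{\omega,j}^n(z)\to\omega$ as $n\to+\infty$, giving condition~(\ref{it:locally2}) of Definition~\ref{def:Omega-attracting}.

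To verify condition~(\ref{it:locally1}), I would transport the picture to the antirational map $R$. Combining \eqref{sigmafetaz=fetagz} with the conjugation $\Phi\circ\sigma=R\circ\Phi$ from Definition~\ref{def:finM}~(\ref{it:defM2}), the composition $\Psi\coloneqq \Phi\circ f\circ\eta$ is a homeomorphism on a pinched neighborhood of $\Lambda_+$ satisfying $\Psi\circ g = R\circ\Psi$, sending $\Lambda_+ = \partial K_+$ onto $J(R) = \partial(\setRS\smallsetminus\mathcal{B}(R))$. Under $\Psi$, the branch $T_{\omega,j}$ corresponds to a branch $\widetilde T$ of $R^{-q}$ fixing $\omega'\coloneqq \Psi(\omega)\in J(R)$, and because $\Psi$ is a homeomorphism that preserves connected components of the attracting basin, it sends the attracting petals of $T_{\omega,j}$ onto the attracting petals of $\widetilde T$, which are the repelling petals of $R^q$ at $\omega'$. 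Since $R$ has no Siegel disk or Herman ring, the immediate parabolic basin of $R^q$ at $\omega'$ fills a full pinched neighborhood of $\omega'$ (pinched at $\omega'$), so for sufficiently small $V'\ni\omega'$, $J(R)\cap V'$ is contained in the union of closures of the repelling petals of $R^q$. Pulling back by $\Psi$ then yields $\Lambda_+(x)\cap V\subseteq U$ for small $V\ni\omega$, completing the argument.

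The main obstacle I anticipate is the case $\omega\in P_1$, where $\Phi$ is pinched exactly at $f(\omega)\in S(\sigma)$; since the conjugation is only a homeomorphism (possibly quasiconformal or David), not biholomorphic, the analytic petal structure of $T_{\omega,j}$ must be established intrinsically on the $F$-side before being matched topologically with the $R$-side. Here I would use the explicit form $T_{\omega}(z) = 2z_0-\eta(z)+o(|z-z_0|)$ from Proposition~\ref{Omega=P_1} together with a Taylor expansion of $f$ at $\omega$ to compute the leading higher-order term of $T_\omega$, directly read off the petal count $p(\omega)$, and then verify that the resulting petal geometry matches, through $\Psi$, the one inherited from $R^q$ at the corner $\omega'\in\nu(V)\cup R^{-1}(\nu(V))$.
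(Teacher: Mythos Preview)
Your strategy differs from the paper's, and the gap you anticipate at $P_1$ is real and not resolved by what you propose.

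The paper does not transport the problem to $R$ via $\Psi=\Phi\circ f\circ\eta$. Instead it uses Lemma~\ref{pandingK+} directly: a point $z$ lies in $K_+$ if and only if $g^n(z)$ is defined for all $n$. Since the repelling petals of $T_{\omega}$ are precisely the attracting petals of $g^q$, any point there has $g^{nq}(z)\to\omega$, so $g^n(z)$ stays defined forever, forcing the whole open repelling cone into $\operatorname{int}(K_+)$. Hence $\Lambda_+=\partial K_+$ must lie in arbitrarily thin cones about the attracting directions, which gives both conditions of Definition~\ref{def:Omega-attracting} at once. For $\omega\in P_1$ an additional geometric step is needed because $\omega\in\partial\mathbb{D}$ and the neighborhood is split between $\mathbb{D}$ and $\mathbb{D}^*$: the paper exploits the symmetries $F_1^{2k+1}(z)=\overline{F_1^{2k}(z)}+o(\cdot)$ and $F_1(\eta(w))=\eta(z)$ whenever $F_1(z)=w$, together with $F(\mathbb{D}^*)\subseteq\overline{\mathbb{D}^*}$, to show that the tangent directions $\pm\mathbf{i}$ are neither attracting nor repelling and that the first direction into $\mathbb{D}^*$ is attracting. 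This pins down the petal configuration (Figure~\ref{fig:enter-label}) and, combined with $B_{\epsilon,(\pi/2n)-\delta}(1,\pm\mathbf{i})\cap K_+=\emptyset$, squeezes $\Lambda_+$ into the attracting cones.

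Your route through $\Psi$ could be made to work for $\omega\in\Omega_+\smallsetminus P_1$ (though note: the containment $J(R)\cap V'\subseteq\overline{\text{repelling petals of }R^q}$ is just Leau--Fatou and has nothing to do with the absence of Siegel disks or Herman rings). But for $\omega\in P_1$ the conjugacy $\Phi$ is pinched exactly at $f(\omega)\in S(\sigma)$, so $\Psi$ does not provide a homeomorphism on any full neighborhood of $\omega$, and you cannot pull back the petal containment for $J(R)$ to a statement about $\Lambda_+$ in a genuine neighborhood of $\omega$. Computing $p(\omega)$ from the Taylor expansion of $T_\omega$ tells you how many petals $T_\omega$ has, but it does nothing to locate $\Lambda_+$ relative to those petals; that is the content of condition~(\ref{it:locally1}), and it requires either the intrinsic $K_+$ argument via Lemma~\ref{pandingK+} or a separate analysis of the $\mathbb{D}$--$\mathbb{D}^*$ splitting, neither of which your proposal supplies.
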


\begin{proof}
    To verify that $F$ is locally $\Omega_+(x)$-attracting on $\Lambda_+(x)$, we first investigate the branch of $F$ fixing a point in $P_1$, under the assumption that $P_1\neq \emptyset$. Without loss of generality, we assume $1 \in P_1$, suppose $F_1$ is the branch of $F$ defined on a small neighborhood $B_1$ of $1$ and fixing $1$, and aim to find a pinched neighborhood $U_1$ of $1$ and such that $F_1$ is contracting on $\Lambda_+(x) \cap U_1$ in the sense of Definition~\ref{def:Omega-attracting}. 
    
    By Proposition~\ref{Omega=P_1}, we have
    \begin{equation*}
    \begin{aligned}
        F_1 (z) 
        &=2-\eta (z) +o(|z-1|) =1+(\overline{z} -1)/\overline{z} +o(|z-1|)\\
        &= 1+ (\overline{z} -1) +o(|z-1|) =\overline{z} +o(|z-1|).
    \end{aligned}
    \end{equation*}
    and thus $F_1^2 (z) =z+ o(|z-1|)$ for $z$ close enough to $1$. As $g$ is topologically conjugate to a rational map on $K_+$, which contains $1$, $F_1^2(z)$ is not equal to the identity map near $1$. Hence, there are $n$ attracting and $n$ repelling directions, for some $n\geq 1$, of $F_1^2$ at the parabolic fixed point $1$. Since $F_1^{2k+1} (z) =\overline{F_1^{2k} (z)} +o(|1-F_1^{2k} (z)|)$, we get that the direction $\mathbf{v}$ is attracting (resp., repelling) if and only if the mirroring direction of $\mathbf{v}$ about the normal direction of the unit circle at $=1$ is attracting (resp., repelling). As every repelling direction has two adjacent attracting directions and vice versa, this implies that $\mathbf{1}$ and $\mathbf{-1}$ are attracting or repelling directions.
Moreover, if $F_1 (z) =w$, then $f(\eta (z)) =f(w)$ and $\eta (z) \neq w$, so $F_1 (\eta (w)) =\eta (z)$. This implies that the direction $\mathbf{v}$ is attracting (resp., repelling) if and only if the mirroring direction of $\mathbf{v}$ about the tangent line of the unit circle at $1$ is repelling (resp., attracting). 
Consequently, $\mathbf{i}$ is not an attracting or repelling direction, and neither is $\mathbf{-i}$. When we say that a direction is contained in $\mathbb{D}$ we mean that if we draw a sufficiently short vector from the point $1$ along this direction, the vector will be contained in $\mathbb{D}$.
    Suppose $\mathbf{v}$ and $\mathbf{v}'$ are two attracting or repelling directions closest to the direction $\mathbf{i}$ contained in $\mathbb{D}$ and $\mathbb{D}^*$, respectively. Then by $F(\mathbb{D}^*) \subseteq \overline{\mathbb{D}^*}$, $\mathbf{v}$ is repelling and $\mathbf{v}'$ is attracting. The same argument holds for the direction $\mathbf{-i}$. Therefore, $n$ is odd, and possible dynamics of $F_1$ is as in Figure~\ref{fig:enter-label}.

    \begin{figure}
        \centering
        \includegraphics[scale=3]{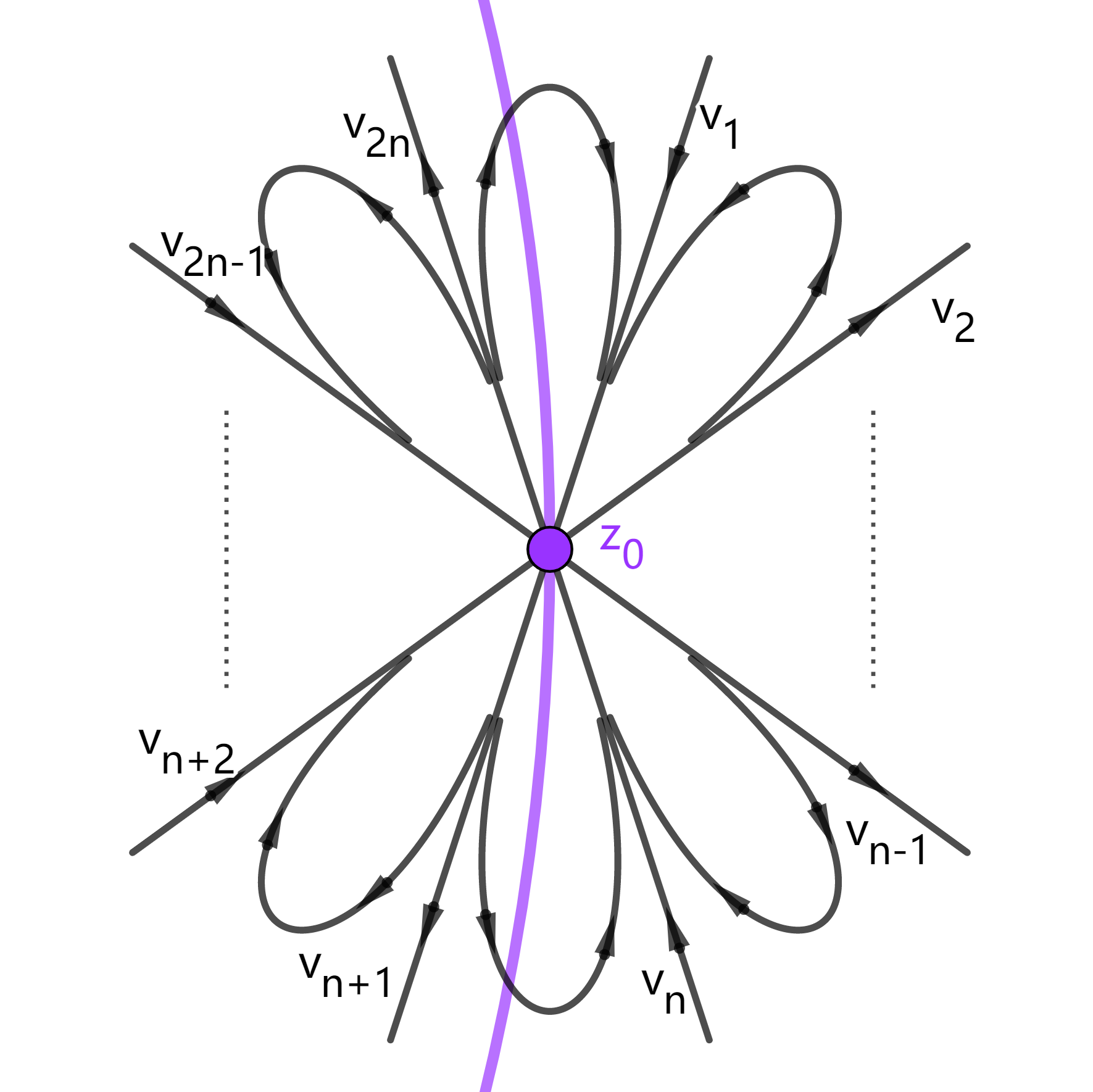}
        \caption{The dynamics of $F_1$ on a neighborhood of the parabolic fixed point $z_0 =1$. The purple curve is a part of the unit circle. $\mathbf{v}_1,\, \mathbf{v}_3,\, \dots ,\, \mathbf{v}_{2n-1}$ are all attracting directions, and $\mathbf{v}_2,\, \mathbf{v}_4,\, \dots ,\, \mathbf{v}_{2n}$ are all repelling directions.}
        \label{fig:enter-label}
    \end{figure}

    For $\alpha >0$, $\epsilon >0$, $z \in \mathbb{C}$, and a vector $\mathbf{v} \in \mathbb{C}$, denote
    \begin{equation*}
        B_{\epsilon, \alpha} (z, \mathbf{v}) \coloneqq \biggl\{ r e^{i \beta} \frac{\mathbf{v}}{\|\mathbf{v}\|} +z \colon 0<r< \epsilon,\, -\alpha <\beta <\alpha \biggr\},
    \end{equation*}
    i.e., $B_{\epsilon, \alpha} (z, \mathbf{v})$ is the intersection of the $\epsilon$-neighborhood of $z$ with the cone with the vertex $z$, the pole direction $\mathbf{v}$, and the field angle $2\alpha$. We use the ordering of $\mathbf{v}_j$ from Figure~\ref{fig:enter-label}.

    The dynamical behavior of $F_1$ near $z_0 =1$ and Lemma~\ref{pandingK+} implies the following:
    \begin{itemize}
        \smallskip\item For each $\delta >0$, there exists $\epsilon >0$ such that for every $j= 2,\, 4,\, \dots ,\, n-1$, there holds $B_{\epsilon, (\pi/n) -\delta} (1, \mathbf{v}_j) \subseteq K_+$.
        \smallskip\item For each $\delta >0$, there exists $\epsilon >0$ such that $B_{\epsilon, (\pi/2n) -\delta} (1, \mathbf{i}) \cap K_+ =\emptyset$ and $B_{\epsilon, (\pi/2n) -\delta} (1, \mathbf{-i}) \cap K_+ =\emptyset$.
    \end{itemize}
    Thus, for each $\delta >0$, we can choose $\epsilon =\epsilon (\delta) >0$ such that in the $\epsilon$-neighborhood of $1$, $\Lambda_+(x) =\partial K_+$ is contained in the pinched neighborhood
    \begin{equation}
        B^{\att}_{\delta} (1) \coloneqq \bigcup_{j=0}^{(n-1)/2} \overline{B_{\epsilon, \delta} (1, \mathbf{v}_{2j+1})},
    \end{equation}
    and, moreover, for each $w \in B^{\att}_{\delta} (1)$, $F_1^m (w) \to 1$ as $m \to +\infty$. Hence $F$ is locally $\Omega_+(x)$-attracting on $\Lambda_+(x)$ near the point 1, in the sense that there exists a pinched neighborhood $U_1$ of 1 satisfying the properties prescribed in Definition~\ref{def:Omega-attracting}. The same argument holds for all $z_0 \in P_1$.

    Now we fix an arbitrary $z_0 \in \Omega_+ \smallsetminus P_1$. We will use similar techniques as above to show that $F$ is locally $\Omega_+(x)$-attracting near $z_0$. Note that $z_0 \in \mathbb{D}^*$, so a neighborhood of $z_0$ is contained in $\mathbb{D}^*$, which is a significant difference between the parabolic periodic points not in $P_1$ and those in $P_1$. Suppose that a branch $T_{z_0}$ of $F^{2q}$ for some $q \in \bN$ fixes $z_0$. Again, $T_{z_0}$ is not topologically conjugate to a rational rotation for the same reasons as the case $z_0\in P_1$. Suppose there are $n$ attracting and $n$ repelling directions of $T_{z_0}$ at $z_0$. Then Lemma~\ref{pandingK+} implies that for each $\delta >0$, there exists $\epsilon >0$ such that for every repelling direction $\mathbf{v}$ at $z_0$, there holds $B_{\epsilon, (\pi /n) -\delta} (z_0, \mathbf{v}) \subseteq K_+$. Thus, for each $\delta >0$, we can choose $\epsilon =\epsilon (\delta) >0$ such that in the $\epsilon$-neighborhood of $z_0$, $\Lambda_+(x) =\partial K_+$ is contained in the pinched neighborhood
    \begin{equation}
        B^{\att}_{\delta} (z_0) \coloneqq \bigcup_{\mathbf{v} \text{ is an attracting direction}} \overline{B_{\epsilon, \delta} (z_0, \mathbf{v})},
    \end{equation}
    and, moreover, for each $w \in B^{\att}_{\delta} (z_0)$, $T_{z_0}^n (w) \to z_0$ as $n \to +\infty$. Hence $F$ is locally $\Omega_+(x)$-attracting near $z_0$. In summary, we get that $F$ is $\Omega_+(x)$-attracting on $\Lambda_+(x)$.
\end{proof}

Next, we show the following proposition regarding the postcritical set of $F^{-1}$.

\begin{prop}\label{Lambda+capPCF-1=Omega}
    If $f \in \mathcal{M}$, then $\Lambda_+ \cap \mathcal{PC}_{F^{-1}} \subseteq \Omega_+$.
\end{prop}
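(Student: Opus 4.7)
The plan is to track the forward $F^{-1}$-iterates of $\mathcal{CV}_{F^{-1}}$ and their accumulation on $\Lambda_+$, showing that they are confined to $\Omega_+$. First I would identify $\mathcal{CV}_{F^{-1}}$ explicitly: using the defining polynomial of $\Gamma$ and implicit differentiation, $\pi_z\colon \Gamma \to \setRS$ fails to be locally injective at $(z_0, w_0) \in \Gamma$ exactly when $f'(\eta(z_0)) = 0$, so $z_0 \in \mathcal{CV}_{F^{-1}}$ if and only if $\eta(z_0)$ is a critical point of $f$. For $z_0 \in \Lambda_+ \subseteq \overline{\mathbb{D}^*}$, $\eta(z_0) \in \overline{\mathbb{D}}$; univalence of $f$ on $\overline{\mathbb{D}}$ places such critical points in $P_1 \subseteq \partial \mathbb{D}$, and since $\eta$ fixes $\partial \mathbb{D}$ pointwise, $z_0 \in P_1$. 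By Proposition~\ref{Omega=P_1}, $\Lambda_+ \cap \mathcal{CV}_{F^{-1}} \subseteq P_1 \subseteq \Omega_+$.

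Next, I would prove by induction on $i \geq 0$ that $F^{-i}(\mathcal{CV}_{F^{-1}}) \cap \Lambda_+ \subseteq P_1$. The key case is $c \in P_1$: Proposition~\ref{Omega=P_1} gives $g(c) = c$, so by \eqref{F^-1zsubseteqg(z)cupLambda-}, $F^{-1}(c) \subseteq \{c\} \cup (\Lambda_- \smallsetminus P_1)$, and combined with $\Lambda_+ \cap \Lambda_- \subseteq \partial \mathbb{D} \cap \Lambda_+ = P_1$ this yields $F^{-1}(c) \cap \Lambda_+ = \{c\}$. Critical values of $F^{-1}$ outside $\Lambda_+$ lie in $K_- \smallsetminus P_1$ or in the tiling set $\widehat{\mathbb{C}} \smallsetminus \widetilde{K(\sigma)}$; the invariance statements in \eqref{F^-1(K-)=K-F^-1(Lambda-)=Lambda-,F(K+)=K+,F(Lambda+)=Lambda+}, together with complementing the $F$-invariance of $\widetilde{K(\sigma)}$, confine their forward $F^{-1}$-orbits to $K_-$ or to the tiling set, so any intersection with $\Lambda_+$ can occur only along the common boundary, which inside $K_-$ lies in $P_1$.

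Finally, I would pass to the closure by decomposing $\bigcup_{i \geq 0} F^{-i}(\mathcal{CV}_{F^{-1}})$ according to its parts in $K_+$, $K_-$, and the tiling set. The $K_+$-part lies in the finite (hence closed) set $P_1$; the $K_-$-part lies in the closed set $K_-$, so its closure intersected with $\Lambda_+$ is contained in $K_- \cap \Lambda_+ \subseteq P_1$. For the tiling-set part, Definition~\ref{def:finM}(\ref{it:defM1}) gives at most one critical value of $F^{-1}$ there, corresponding via $f \circ \eta$ and $\Phi$ (from Definition~\ref{def:finM}(\ref{it:defM2})) to the unique critical value of $R$ inside $\mathcal{B}(R)$. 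Since $\mathcal{B}(R)$ is simply connected and completely invariant with no Siegel disk or Herman ring, the Fatou--Sullivan classification forces it to be either an attracting or a parabolic basin, so the forward $R$-orbit of its critical value accumulates on $J(R) = \partial \mathcal{B}(R)$ only at the parabolic periodic cycle of $\mathcal{B}(R)$ (or not at all, if attracting). Using the cusp identification $\Phi(S(\sigma)) = \nu(V)$ and transferring through $f$, these parabolic accumulation points correspond to parabolic periodic points of $F$ lying in $\Lambda_+$, i.e., to elements of $\Omega_+$. The main obstacle will be this last step: the conjugacy $\Phi$ in Definition~\ref{def:finM}(\ref{it:defM2}) is given only on a pinched neighborhood of $K(\sigma)$, not on the tiling set itself, so the $F^{-1}$-dynamics on the tiling set cannot be conjugated directly to $R|_{\mathcal{B}(R)}$; instead one must read off accumulation of the $R$-orbit on $\partial \mathcal{B}(R)$ and use the cusp correspondence to match these limits to points of $\Omega_+$ on $\Lambda_+$.
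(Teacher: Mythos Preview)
Your identification of $\mathcal{CV}_{F^{-1}}$ is incorrect, and the error controls the rest of the argument. Implicit differentiation of $h(\eta(z),w)=0$ with $h(x,y)=(f(y)-f(x))/(y-x)$ gives, on $\Gamma$, $h_x=-f'(x)/(y-x)$ and $h_y=f'(y)/(y-x)$; the projection $\pi_z$ fails to be locally injective precisely where $h_y=0$, i.e., where $f'(w_0)=0$, not where $f'(\eta(z_0))=0$ (the latter is the condition for $\pi_w$ to fail, giving $\mathcal{CV}_F$). Thus $\mathcal{CV}_{F^{-1}}\subseteq F^{-1}(\mathcal{C}_f)$ and $\mathcal{PC}_{F^{-1}}\subseteq\overline{\bigcup_{i\ge 1}F^{-i}(\mathcal{C}_f)}$: one must track the $F^{-1}$-orbits of the critical points of $f$ themselves.

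This matters because condition~(\ref{it:defM3}) permits critical points of $f$ in $\operatorname{int}(K_+)$; for such a $c$, the $K_+$-part of $\bigcup_i F^{-i}(c)$ is the forward $g$-orbit $\{g^n(c)\}_{n\ge 1}\subseteq K_+$, which is certainly not contained in $P_1$. Your claim that ``the $K_+$-part lies in the finite set $P_1$'' therefore fails, and this is exactly the substantive case. The paper handles it by applying the conjugacy $\Phi\circ f\circ\eta$ on $K_+$ (where it \emph{is} defined): the point $(\Phi\circ f\circ\eta)(c)$ lies in the Fatou set of $R$, and since $R$ has no Siegel disks or Herman rings, Sullivan's classification forces any accumulation of its forward $R$-orbit on $J(R)$ to be a parabolic periodic point, which transfers back to $\Omega_+$.

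Your tiling-set argument also misfires, though you correctly flag the obstacle. The paper does not (and cannot) use $\Phi$ there; instead it checks directly that $F^{-1}\bigl(f^{-1}(T^0(\sigma))\cup\overline{\mathbb{D}}\bigr)\subseteq\overline{\mathbb{D}}$, so for any critical point $c\in f^{-1}(T^0(\sigma))$ the entire orbit $\bigcup_n F^{-n}(c)$ sits in $\overline{\mathbb{D}}$, whose intersection with $\Lambda_+$ is $P_1\subseteq\Omega_+$. This elementary invariance replaces your attempted appeal to Fatou--Sullivan on $\mathcal{B}(R)$, and requires no correspondence between critical values of $F^{-1}$ and critical values of $R$ (which the merely topological $\Phi$ would not furnish in any case).
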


\begin{proof}
    Note that if a point $z \in \widehat{\mathbb{C}}$ belongs to $\mathcal{CV}_{F^{-1}}$, then there exists some $w \in F(z)$ with $f' (w)=0$. Recall that $\mathcal{PC}_{F^{-1}} =\overline{\bigcup_{i=0}^{+\infty} F^{-i} (\mathcal{CV}_{F^{-1}})}$, so we have $\mathcal{PC}_{F^{-1}} \subseteq \overline{\bigcup_{i=1}^{+\infty} F^{-i} (\mathcal{C}_f)}$, where $\mathcal{C}_f$ denotes the set of all critical points of $f$. We will show $\Lambda_+ \cap \mathcal{PC}_{F^{-1}} \subseteq \Omega_+$ by showing
    \begin{equation}\label{9u99e99cs99d}
        \Lambda_+ \cap \overline{\bigcup_{i=1}^{+\infty} F^{-i} (\mathcal{C}_f)} \subseteq \Omega_+.
    \end{equation}

    Condition~(\ref{it:defM1}) in Definition~\ref{def:finM} implies that any critical point of $f$ is either in $f^{-1} (T^0 (\sigma))$ or in $f^{-1} (K(\sigma)) =K_+ \cup K_-$. For $z \in f^{-1} (T^0 (\sigma)) \cup \partial \mathbb{D}$, if $w \in F^{-1} (z)$, then $f(\eta (w)) =f(z) \in T^0 (\sigma) \cup \partial U = \widehat{\mathbb{C}} \smallsetminus U$. Then $\eta (w) \notin \mathbb{D}$, so $w \in \overline{\mathbb{D}}$. Since $F(\mathbb{D}^*) \subseteq \overline{\mathbb{D}^*}$, we have $F^{-1} (\mathbb{D}) \subseteq \overline{\mathbb{D}}$. Hence we get $F^{-1} (f^{-1} (T^0 (\sigma)) \cup \overline{\mathbb{D}}) \subseteq \overline{\mathbb{D}}$, and thus $F^{-n} (f^{-1} (T^0 (\sigma)) \cup \overline{\mathbb{D}}) \subseteq \overline{\mathbb{D}}$ holds for all $n \in \bN$. Since $\Lambda_+ \cap \overline{\mathbb{D}} =P_1$, we get that $\Lambda_+ \cap \overline{\bigcup_{i=1}^{+\infty} F^{-i} (z)} \subseteq P_1$ holds for all $z \in f^{-1} (T^0 (\sigma))$. Consequently, to prove \eqref{9u99e99cs99d}, it remains to show that for each critical point $z \in K_+ \cup K_-$ of $f$, it holds that
    \begin{equation}\label{9u99e988888}
        \Lambda_+ \cap \overline{\bigcup_{i=1}^{+\infty} F^{-i} (z)} \subseteq \Omega_+.
    \end{equation}

    Let $z \in K_+ \cup K_-$ be an arbitrary critical point of $f$. Since $f$ is univalent on $\mathbb{D}$, we have $z \in (K_+ \cup K_-) \smallsetminus \mathbb{D} =K_+$. Then condition~(\ref{it:defM3}) in Definition~\ref{def:finM} implies $z \in \operatorname{int} (K_+) \cup P_1 \subseteq W$.

    We argue by contradiction and assume that \eqref{9u99e988888} does not hold. This means we can choose a point $w \in \Lambda_+ \smallsetminus \Omega_+$ that belongs to $\overline{\bigcup_{i=1}^{+\infty} F^{-i} (z)}$. Since $w \in \Lambda_+ \smallsetminus \Omega_+ \subseteq \Lambda_+ \smallsetminus P_1 \subseteq \mathbb{D}^*$ and $K_- \subseteq \overline{\mathbb{D}}$, by \eqref{gLambda_+=Lambda_+}, \eqref{F^-1zsubseteqg(z)cupK-}, and $F^{-1} (K_-) =K_-$ in \eqref{F^-1(K-)=K-F^-1(Lambda-)=Lambda-,F(K+)=K+,F(Lambda+)=Lambda+}, we have $w \in \overline{ \{ g^i (z) : i \in \bN \}}$. Note that by \eqref{gLambda_+=Lambda_+} we have $\overline{ \{ g^i (z) : i \in \bN \}} \subseteq K_+ \subseteq W$. Then \eqref{sigmafetaz=fetagz} implies $f (\eta(w)) \in \overline{ \{ \sigma^i (f (\eta(z))) : i \in \bN \}}$, where $\overline{ \{ \sigma^i (f (\eta(z))) : i \in \bN \}} \subseteq f(\eta (K_+)) =f(K_-) \subseteq K(\sigma)$ and $f( \eta (w)) \in f( \eta (\Lambda_+)) =f(\Lambda_-) \subseteq \partial K(\sigma)$ by \eqref{etaK_+=K_-}.

    Since $K_+ \subseteq f^{-1} (U)$, $K_+ \smallsetminus P_1 \subseteq \mathbb{D}^*$, and $f^{-1} (U) \smallsetminus \mathbb{D} \subseteq W$, we have $K_+ \smallsetminus P_1 \subseteq \operatorname{int} (W)$. Since $w \in \Lambda_+ \smallsetminus P_1 \subseteq K_+ \smallsetminus P_1$ is not a parabolic periodic point of $F$ and $F(\Lambda_+) =\Lambda_+$, $w$ is not a parabolic periodic point of $g$. Then $f(\eta (w))$ is not a parabolic periodic point of $\sigma$ because $g$ is topologically conjugate to $\sigma$ on a neighborhood of $w$.

    Recall $z \in \operatorname{int} (K_+) \cup P_1$. Since $g^n (z') =z'$ for all $z' \in P_1$, $w \notin P_1$, and $w \in \overline{\{ g^i (z) : i \in \bN \}}$, it follows that $z \in P_1$. Consequently, $z \in \operatorname{int} (K_+)$. By \eqref{etaK_+=K_-}, $f(\eta (z)) \in \operatorname{int} (K(\sigma))$.



    By condition~(\ref{it:defM2}) in Definition~\ref{def:finM}, we can choose a Bers-like antirational map $R$, 
     a forward invariant Jordan domain $D \subsetneq \mathcal{B} (R)$ such that 
    $V \coloneqq  \widehat{\mathbb{C}} \smallsetminus \overline{D}$ is a polygon, and a homeomorphism $\Phi$ conjugating $\sigma$ to $R$ on the closure of a neighborhood of $K(\sigma)$ pinched at all points in $S(\sigma) \cup \sigma^{-1} (S(\sigma))$, such that $\Phi (S(\sigma)) =\nu (V)$ and $\Phi (K(\sigma)) =\widehat{\mathbb{C}} \smallsetminus \mathcal{B} (R)$.

    Write $z' =\Phi (f (\eta (z)))$ and $w' =\Phi (f (\eta (w)))$. Then $z' \in \operatorname{int} (\widehat{\mathbb{C}} \smallsetminus \mathcal{B} (R))$ as $f(\eta (z)) \in \operatorname{int} (K(\sigma))$. Moreover, $w' \in \overline{\{ R^i (z') : i \in \bN \}} \cap \Phi (\partial  K(\sigma))$. Since $\mathcal{B} (R)$ is completely invariant under $R$, 
    the Julia set $J(R)$ of $R$ is the boundary of $\mathcal{B} (R)$. Consequently, $z' \in \operatorname{int} (\widehat{\mathbb{C}} \smallsetminus \mathcal{B} (R)) = (\widehat{\mathbb{C}} \smallsetminus \mathcal{B} (R)) \smallsetminus J(R)$ is in the Fatou set of $R$, and $w' \in \Phi (\partial K(\sigma)) =\partial \Phi (K (\sigma)) =\partial (\widehat{\mathbb{C}} \smallsetminus \mathcal{B} (R)) =J(R)$. 
    By Sullivan's classification theorem, $w' \in \overline{\{ R^i (z') : i \in \bN \}}$ implies that $w'$ is a parabolic periodic point of $R$.

    We first assume that $w' \in \nu (V) \cup R^{-1} (\nu (V))$. Since 
    $w'$ is periodic, $w' =\Phi (f (\eta (w)))$ must be in $\nu (V)$.  
    Recall $\Phi (S(\sigma)) =\nu (V)$, so 
    $f(\eta (w)) \in S(\sigma)$. As a result, $\eta (w) \in (f|_{\overline{\mathbb{D}}})^{-1} (S(\sigma)) =P_1$, and $w \in P_1$. But by assumption, $w \notin P_1$. Hence $w' \notin \nu (V) \cup R^{-1} (\nu (V))$.

    Recall that $\Phi^{-1}$ conjugates $R$ to $\sigma$ on the closure of a neighborhood of $\widehat{\mathbb{C}} \smallsetminus \mathcal{B} (R)$ pinched at all points in $\nu (V) \cup R^{-1} (\nu (V))$, so the fact that $w' =\Phi (f (\eta (w)))$ is a parabolic periodic point of $R$ implies that $f(\eta (w))$ is a parabolic periodic point of $\sigma$, by the classification of periodic points of conformal maps. However, we have already shown that $f(\eta (w))$ is not a parabolic periodic point of $\sigma$. Therefore, \eqref{9u99e988888} holds, and thus $\Lambda_+ \cap \mathcal{PC}_{F^{-1}} \subseteq \Omega_+$.
\end{proof}
\begin{rem}\label{re:importantadd}
    Note that the argument above also shows that $\Omega_+$ is finite, since $\Omega_+\subseteq P_1\cup (\Phi\circ f\circ \eta)^{-1}(\Omega_R)$, where $\Omega_R$ denotes the set of parabolic periodic points of $R$.
\end{rem}
Next, we have the following easy but important lemma.
\begin{lemma}\label{le:fnonsing}
If $f \in \mathcal{M}$, then $\Lambda_+\cap \sing=\emptyset$.
\end{lemma}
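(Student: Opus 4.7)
The plan is to take any $z_0 \in \Lambda_+$ and any $w_0 \in \setRS$ with $(z_0, w_0) \in \Gamma$, and to show that $\Gamma$ is, in a neighborhood of $(z_0, w_0)$, a smooth antiholomorphic graph over the $z$-coordinate, with no other local branches accumulating. Once this is established, $\pi_z$ is injective on $\Gamma \cap W$ for some sufficiently small neighborhood $W$ of $(z_0, w_0)$, so $(z_0, w_0) \notin S_F$; since $w_0$ was arbitrary, $z_0 \notin \pi_z(S_F) = \sing$.

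I would split on whether $w_0 = \eta(z_0)$. If $w_0 \neq \eta(z_0)$, then $w_0 \in F(z_0) \subseteq F(\Lambda_+) \subseteq \Lambda_+$. If moreover $w_0$ were a critical point of $f$, condition~(\ref{it:defM3}) of Definition~\ref{def:finM} would place $w_0 \in P_1 \subseteq \partial \mathbb{D}$; then $w_0, \eta(z_0) \in \overline{\mathbb{D}}$ (the latter because $\Lambda_+ \subseteq \overline{\mathbb{D}^*}$), $f(w_0) = f(\eta(z_0))$, and the univalence of $f$ on $\overline{\mathbb{D}}$ would force $w_0 = \eta(z_0)$, a contradiction. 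Hence $f'(w_0) \neq 0$, and the implicit function theorem applied to $f(w) = f(\eta(z))$ presents $\Gamma$ near $(z_0, w_0)$ as a smooth antiholomorphic graph.

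For the remaining case $w_0 = \eta(z_0)$, the fact that $(z_0, w_0) \in \Gamma$ (via the limiting form of $P$) forces $f'(\eta(z_0)) = 0$. Univalence of $f$ on $\overline{\mathbb{D}}$ confines critical points of $f$ in $\overline{\mathbb{D}}$ to $\partial \mathbb{D}$ and forces them to be quadratic, so $\eta(z_0) \in P_1$, which, since $|\eta(z_0)|=1$, gives $z_0 = \eta(z_0) \in P_1$, $w_0 = z_0$, and $f''(z_0) \neq 0$. I would then expand the numerator of $P(\overline{z}, w) = (f(w) - f(\eta(z)))/(w - \eta(z))$ as $\tfrac{f''(z_0)}{2}\bigl((w - z_0)^2 - (\eta(z) - z_0)^2\bigr)$ modulo higher order, use the factorization $(w - z_0)^2 - (\eta(z) - z_0)^2 = (w - \eta(z))(w + \eta(z) - 2 z_0)$, and conclude
\begin{equation*}
    P(\overline{z}, w) = \tfrac{f''(z_0)}{2}\bigl(w + \eta(z) - 2 z_0\bigr) + \text{higher order},
\end{equation*}
so that $P_w(z_0, z_0) = f''(z_0)/2 \neq 0$. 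The implicit function theorem then yields the local graph $w = 2 z_0 - \eta(z) + \cdots$. The quadraticity of the critical point further ensures that $f(\eta(z))$ has exactly two preimages near $z_0$ for each nearby $z$, one being $\eta(z)$ itself and so removed by the division in the definition of $P$; hence no extra local branches of $\Gamma$ pass through $(z_0, z_0)$.

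The only real delicacy is this second case, where $P$ is only defined via a limit at $w = \eta(z)$ and one must verify that the cancellation between the denominator $w - \eta(z)$ and the double root in the numerator leaves a single smooth local branch of $\Gamma$. All the needed ingredients are already available in $\mathcal{M}$, namely condition~(\ref{it:defM3}) of Definition~\ref{def:finM}, which localizes critical points of $f$ on $\Lambda_+$ to $P_1$, and the univalence of $f$ on $\overline{\mathbb{D}}$, which both confines $f$'s critical points in $\overline{\mathbb{D}}$ to $\partial \mathbb{D}$ and makes them quadratic.
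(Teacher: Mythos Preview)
Your proof is correct and uses the same ingredients as the paper's: condition~(\ref{it:defM3}) of Definition~\ref{def:finM} to confine critical points of $f$ on $\Lambda_+$ to $P_1$, univalence of $f$ on $\overline{\mathbb{D}}$ to derive the contradiction in the case $w_0\neq\eta(z_0)$, and the quadraticity $f''(z_0)\neq 0$ at points of $P_1$ for the case $w_0=\eta(z_0)=z_0$. The only difference is packaging: the paper phrases the conclusion as ``$F(z)$ contains $d$ distinct points'' (so that the degree-$d$ projection $\pi_z$ is unramified over $z$), whereas you verify local injectivity of $\pi_z$ point-by-point via the implicit function theorem---your computation $P_w(z_0,z_0)=f''(z_0)/2$ in the $P_1$ case is exactly the calculation carried out in the proof of Proposition~\ref{Omega=P_1}.
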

\begin{proof}
    Take $z\in \Lambda_+$. By definition of $\sing$, we need to show that $F(z)$ contains $d$ distinct points. This follows from the fact that $F$ is invariantly inverse-like on $\Lambda_+$, that if $w\in \Lambda_+$ is a critical point of $f$, then $w\in P_1$ by condition~(\ref{it:defM3}) in Definition~\ref{def:finM}, and lastly that the branch $f$ of $F$ from $w\in P_1$ to $w$ is locally well-defined and unique, since $f''(w)\neq 0$.
\end{proof}
The relative hyperbolicity of $
F$ on $\Lambda_+(x)$ is now immediate.
\begin{prop}\label{Frelativelyhyperbolic}
    If $f \in \mathcal{M}$, and $x$ is such that $\Lambda_+(x)=\Lambda_+$, then $F$ is relatively hyperbolic on $\Lambda_+(x)$.
\end{prop}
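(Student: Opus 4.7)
The goal is to verify the four requirements in Definition~\ref{def:wellbehaved}: that $F$ is invariantly inverse-like on $\Lambda_+(x)$, that (i) $\Lambda_+(x) \cap \mathcal{CV}_{F^{-1}} = \emptyset$, that (ii) $F$ is locally $\Omega_+(x)$-attracting on $\Lambda_+(x)$, and that (iii) $\Lambda_+(x) \cap \mathcal{PC}_{F^{-1}} \subseteq \Omega_+(x)$. The entire argument is an assembly of results already in place in this subsection, so the strategy is simply to point to each ingredient in turn.

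First, the invariant inverse-likeness of $F$ on $\Lambda_+ = \Lambda_+(x)$ is precisely the conclusion drawn in the text immediately following \eqref{etaK_+=K_-}, which uses \eqref{gLambda_+=Lambda_+}, \eqref{F^-1zsubseteqg(z)cupK-}, and the last two equations of \eqref{F^-1(K-)=K-F^-1(Lambda-)=Lambda-,F(K+)=K+,F(Lambda+)=Lambda+}. Conditions (ii) and (iii) are then Proposition~\ref{locally attracting} and Proposition~\ref{Lambda+capPCF-1=Omega}, respectively, applied under the standing assumption $\Lambda_+(x) = \Lambda_+$.

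The only step that still needs a comment is condition (i). Here the plan is to combine the remark made directly after Definition~\ref{def:sing}, namely that $\mathcal{CV}_{F^{-1}} \subseteq \sing$, with Lemma~\ref{le:fnonsing}, which asserts $\Lambda_+ \cap \sing = \emptyset$. These together give the chain of inclusions
\begin{equation*}
\Lambda_+(x) \cap \mathcal{CV}_{F^{-1}} \;\subseteq\; \Lambda_+(x) \cap \sing \;=\; \emptyset,
\end{equation*}
which is (i). Conceptually the potential obstacle here would be parabolic periodic points in $P_1$, which are simultaneously critical points of $f$ sitting on $\Lambda_+$; but Lemma~\ref{le:fnonsing} shows that Definition~\ref{def:finM}~(\ref{it:defM3}), together with the injectivity of $f$ on $\overline{\mathbb{D}}$ and the non-degeneracy $f''(w) \neq 0$ at each $w \in P_1$, forces the branch of $F$ at such a point to remain single-valued, so that no singular point arises in $\Lambda_+$ at all. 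Once these four ingredients are cited, the proof terminates immediately.
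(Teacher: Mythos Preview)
Your proof is correct and follows essentially the same route as the paper's: both derive condition~(i) from $\mathcal{CV}_{F^{-1}}\subseteq \sing$ together with Lemma~\ref{le:fnonsing}, and invoke Propositions~\ref{locally attracting} and~\ref{Lambda+capPCF-1=Omega} for conditions~(ii) and~(iii). You additionally spell out the invariant inverse-likeness hypothesis and give a helpful gloss on why $P_1$ causes no trouble, but the logical skeleton is identical.
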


\begin{proof}
    As $\mathcal{CV}_{F^{-1}}\subseteq \sing$, this proposition follows from Lemma~\ref{le:fnonsing}, and Propositions~\ref{locally attracting} and~\ref{Lambda+capPCF-1=Omega}.\end{proof}

\begin{prop}\label{prop:LambdaMinimal}
    If $f \in \mathcal{M}$, and $x$ is such that $\Lambda_+(x)=\Lambda_+$, then $\Lambda_+(x)$ is minimal.
\end{prop}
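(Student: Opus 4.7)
The plan is to verify each of the three conditions of Definition~\ref{def:minimal} in turn.

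Condition~(\ref{it:countable}) is immediate from Remark~\ref{re:importantadd}, which already establishes that $\Omega_+$ is finite.

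For Condition~(\ref{it:min1}), three statements need checking. That $\Lambda_+=\partial K_+$ has empty interior follows from the fact that $K_+\subseteq\overline{\mathbb{D}^*}$ is a proper subset of $\widehat{\mathbb{C}}$. The relation $\Lambda_+\cap(\sing\smallsetminus\mathcal{CV}_{F^{-1}})=\emptyset$ is immediate from Lemma~\ref{le:fnonsing}. To rule out proper closed forward-invariant subsets, I would transport the question to the $R$-side via the conjugation $\Psi\coloneqq\Phi\circ f\circ\eta$, which conjugates $g$ on $K_+$ to $R$ on $\widehat{\mathbb{C}}\smallsetminus\mathcal{B}(R)$ and hence $F=g^{-1}$ to $R^{-1}$. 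A closed, nonempty, forward-$F$-invariant subset of $\Lambda_+$ then corresponds under $\Psi$ to a closed, nonempty, backward-$R$-invariant subset of $J(R)$. Because $\mathcal{B}(R)$ is a completely invariant Fatou component of $R$, no point of $J(R)$ is exceptional for $R$; by the classical density of the backward orbit of any non-exceptional point in $J(R)$, the only such subset is $J(R)$ itself, which yields the conclusion on the $F$-side.

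Condition~(\ref{it:U}) is the substantive step, and the plan is to apply Remark~\ref{remark:suff}. This reduces the task to constructing a single pinched neighborhood $U\subsetneq\widehat{\mathbb{C}}$ of $\Lambda_+$, pinched at $\Omega_+$, with $F(U)\subseteq U$, an extension of $g_{F,\Lambda_+}$ to $F(U)\cup\Lambda_+$ so that $g_{F,\Lambda_+}(F(y))=\{y\}$ for each $y\in U$, and with $\Lambda_+(y)\subseteq\Lambda_+$ for all but finitely many $y\in U$. I would first build the analogue of $U$ on the $R$-side: a sufficiently thin two-sided tubular neighborhood $W$ of $J(R)$ in $\widehat{\mathbb{C}}$, pinched at the parabolic periodic points $\Omega_R$ of $R$ together with $\nu(V)$, small enough that $R^{-1}(W)\subseteq W$. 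Expansion of $R$ on the hyperbolic part of $J(R)$ provides backward invariance away from the parabolic points, while the Leau--Fatou flower structure at each parabolic periodic point of $R$ supplies the pinched attracting sectors for $R^{-1}$. Pulling $W$ back through the conjugation (which extends to a pinched neighborhood of $K(\sigma)$ by Definition~\ref{def:finM}(\ref{it:defM2})) then yields the portion of $U$ lying in $K_+$; the portion of $U$ lying outside $K_+$ is handled by a parallel local analysis using the pinched attracting sectors $B^{\att}_\delta(\omega)$ produced in the proof of Proposition~\ref{locally attracting}, which straddle $\Lambda_+$ at each $\omega\in\Omega_+$. Finally, the requirement that $\Lambda_+(y)\subseteq\Lambda_+$ for all but finitely many $y\in U$ transports via $\Psi$ to the classical fact that the backward orbit under $R$ of any non-exceptional point of $\widehat{\mathbb{C}}$ is dense in $J(R)$.

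The main obstacle I anticipate is constructing $U$ in such a way that $F(U)\subseteq U$ holds, rather than merely $F^q(U)\subseteq U$ for the parabolic periods $q$, and ensuring the extension of $g_{F,\Lambda_+}$ agrees consistently on the portion of $U$ lying outside $K_+$, where the conjugation $\Psi$ is not directly available and the dynamics must be read off from the structure of $F$ near the attracting sectors near points of $\Omega_+$.
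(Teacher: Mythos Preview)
Your handling of conditions~(\ref{it:min1}) and~(\ref{it:countable}) essentially matches the paper's (one small quibble: the reason $\Lambda_+=\partial K_+$ has empty interior is simply that the boundary of any set has empty interior, not that $K_+$ is a proper subset of $\widehat{\mathbb{C}}$).

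For condition~(\ref{it:U}), however, the paper takes a much simpler route that sidesteps exactly the obstacles you flag. Rather than appealing to Remark~\ref{remark:suff} and engineering a thin pinched neighborhood of $\Lambda_+$, the paper verifies condition~(\ref{it:U}) directly using the large domain $W=\overline{f^{-1}(U)\smallsetminus\mathbb{D}}$ already introduced before Lemma~\ref{pandingK+}. The key observations are that $\Lambda_+\smallsetminus P_1\subseteq\operatorname{int}(W)$, that $F(W)\subseteq W$ (an easy consequence of Lemma~\ref{pandingK+}(\ref{it:le641})), and that $g$ is globally defined on $W$ with $g^n(F^n(y))=\{y\}$ for all $y\in W$. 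Thus for each $z\in\Lambda_+\smallsetminus\Omega_+$ one simply takes $U_z$ to be any open neighborhood of $z$ in $\operatorname{int}(W)\cap\mathbb{D}^*$ avoiding the (finitely many) attracting periodic points of $g$. The three-points condition is automatic since $F^n(U_z)\subseteq\mathbb{D}^*$. No pinching is needed, and your coordination problem of $F$ versus $F^q$ near parabolic orbits never arises.

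Your plan also has a gap in the verification of $\Lambda_+(y)\subseteq\Lambda_+$: density of the backward $R$-orbit in $J(R)$ gives $\Lambda_+\subseteq\Lambda_+(y)$, the opposite inclusion. The paper argues instead in two cases. For $y\notin K_+$ one shows via Lemma~\ref{pandingK+} that $F^n(y)\cap K_+=\emptyset$ for all $n$, while the nested intersection $K_+=\bigcap_{n} F^n(W)$ forces the limit set into $K_+$; hence $\Lambda_+(y)\subseteq\partial K_+=\Lambda_+$. For $y\in K_+$ the conjugation to $R$ is used, but the relevant fact is not density of backward orbits: it is that $R$ has no Siegel disks or Herman rings (Definition~\ref{def:finM}(\ref{it:defM2})), which forces the limit set of $\bigcup_n R^{-n}(w)$ to lie in $J(R)$ whenever $w$ is not an attracting periodic point.
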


\begin{proof}
    First, from Lemma~\ref{le:fnonsing}, we have that $\Lambda_+\cap(\sing\smallsetminus \mathcal{CV}_{F^{-1}})=\emptyset$.
    Next, we notice that $\Lambda_+$ is the boundary of the closed set $K_+$, so it has empty interior. We suppose $A$ is a nonempty closed subset of $\Lambda_+$ satisfying $F(A) \subseteq A$. Recall that $F(\Lambda_+) =\Lambda_+$ from \eqref{F^-1(K-)=K-F^-1(Lambda-)=Lambda-,F(K+)=K+,F(Lambda+)=Lambda+}. This implies $F^{-1} (z) \cap \Lambda_+ \neq \emptyset$ for each $z \in \Lambda_+$. For $z \in \Lambda_+$, recall $\{ g(z) \} =F^{-1} (z) \cap \overline{\mathbb{D}^*}$, so $\{ g(z) \} =F^{-1} (z) \cap \Lambda_+$. Hence, we obtain $F(z) =g^{-1} (z)$ for all $z \in \Lambda_+$, so $g^{-1} (A) \subseteq A$. By \eqref{sigmafetaz=fetagz}, we get $\sigma^{-1} ( (f\circ \eta ) (A)) \subseteq (f\circ \eta) (A)$. By Definition~\ref{def:finM}, there exists a homeomorphism $\Phi$ from $\partial K(\sigma)$ to $J(R)$ conjugating $\sigma$ to a Bers-like antirational map $R$ with no Fatou component being a Siegel disk or a Herman ring. Then $R^{-1} ((\Phi \circ f \circ \eta) (A)) \subseteq (\Phi \circ f \circ \eta) (A)$. Since $A$ is a nonempty closed subset of $\Lambda_+$, $f \circ \eta$ restricted on $\Lambda_+$ is a homeomorphism from $\Lambda_+$ to $\partial K(\sigma)$, and since $\Phi$ is a homeomorphism from $\partial K(\sigma)$ to $J(R)$, $(\Phi \circ f \circ \eta) (A)$ is a closed subset of $J(R)$ and we have $A =\Lambda_+$ if and only if $(\Phi \circ f \circ \eta) (A) =J(R)$. Since $A$ is nonempty and since $\bigcup_{n=0}^{+\infty} R^{-n} (z)$ is dense in $J(R)$, $R^{-1} ((\Phi \circ f \circ \eta) (A)) \subseteq (\Phi \circ f \circ \eta) (A)$ implies $(\Phi \circ f \circ \eta) (A) =J(R)$, so $A =\Lambda_+$. Hence $\Lambda_+$ has no proper subset that is $F$-invariant, and condition (\ref{it:min1}) in Definition~\ref{def:minimal} holds.

    

    Now we aim to prove condition~(\ref{it:U}) in Definition~\ref{def:minimal}.
    To that end, fix an arbitrary $z \in \Lambda_+ \smallsetminus \Omega_+$. Recall from the proof of Proposition~\ref{Lambda+capPCF-1=Omega} that $\Lambda_+ \smallsetminus P_1 \subseteq \operatorname{int} (W)$, so by Proposition~\ref{Omega=P_1}, we have $z \in \Lambda_+ \smallsetminus \Omega_+ \subseteq \Lambda_+ \smallsetminus P_1 \subseteq \operatorname{int} (W)$.

    Suppose that there exists an attracting periodic point $y \in K_+ \cap \operatorname{int} (W) =K_+ \smallsetminus P_1$ of $g$. Then $g^n (y) \in K_+ \smallsetminus P_1 \subseteq \operatorname{int} (W)$ for all $n \in \bN$ (note that $g^n (y) \notin P_1$ is because $g(P_1) =P_1$ and $g^m (g^n (y)) =y \notin P_1$ for some $m \in \bN$). Then the fact that $y$ is attracting implies that there exists a neighborhood of $y$ where $g^n$ is defined for all $n \in \bN$, so by Lemma~\ref{pandingK+}~(1), we get $y \in \operatorname{int} (K_+)$.
    
    We choose $U_z$ to be an open neighborhood of $z$ in $\operatorname{int} (W)$ not containing any attracting periodic point of $g$ in $K_+ \smallsetminus P_1$.
    For an arbitrary $w \in F(W)$, there exists $z \in W \subseteq \mathbb{D}^* \cup P_1$ such that $w \in F(z)$, i.e., $z \in F^{-1} (w)$. By (\ref{it:le641}) in Lemma~\ref{pandingK+}, this implies $w \in W$. Thus, $F(W) \subseteq W$ and $g (z) \in W$ for all $z \in F(w)$. As a result, we get that $g^n$ is defined for all $z \in F^n (U_z) \subseteq F^n (W)$. By the definition of $g$, we can see $g^n (F^n (w)) =\{ w \}$ for all $w \in U_z$. Moreover, $F^n(U_z)\subseteq {\mathbb D^*}$, since $U_z\subseteq \mathbb D^*$, so there are three points in $\setRS$ that do not belong to $\bigcup_{n=0}^{+\infty} F^n(U_z)$.

    The statement~(\ref{it:le642}) in Lemma~\ref{pandingK+} implies that $K_+ =\bigcap_{n=0}^{+\infty} g^{-n} (W) =\bigcap_{n=0}^{+\infty} F^n (W)$. Since $F^n (W) =g^{-n} (W)$ is compact for all $n \in \bN$, and $\dots \subseteq F^3 (W) \subseteq F^2 (W) \subseteq F(W) \subseteq W$, we get that for each open neighborhood $\mathcal{U}$ of $K_+$, there exists $N \in \bN$ such that $F^N (W) \subseteq \mathcal{U}$. Consequently, the limit set of $\bigcup_{n=0}^{+\infty} F^n (y)$ is contained in $K_+$ for all $y \in U_z$.

    Now we fix an arbitrary $y \in U_z$ and claim that the limit set of $\bigcup_{n=0}^{+\infty} F^n (y)$ is contained in $\Lambda_+$. We will discuss two cases: $y \in K_+$ and $y \notin K_+$.

    Suppose $y \notin K_+$. By (\ref{it:le642}) Lemma~\ref{pandingK+}, there exists $m \in \bN$ with $g^m (y) \notin W$. For each $n \in \bN$ and each $w \in F^n (y)$, we have $g^n (w) =y$, and thus $g^{m+n} (w) \notin W$. This implies that $w \notin K_+$. Hence $\bigcup_{n=0}^{+\infty} F^n (y) \cap K_+ =\emptyset$, so its limit set is contained in $\widehat{\mathbb{C}} \smallsetminus \operatorname{int} K_+$. Hence, the limit set of $\bigcup_{n=0}^{+\infty} F^n (y)$ is contained in $(\widehat{\mathbb{C}} \smallsetminus \operatorname{int} K_+) \cap K_+ =\Lambda_+$.

    Now we suppose $y \in K_+$. By the choice of $U_z$, $y$ is not an attracting periodic point of $g$. Recall that $\Phi \circ f \circ \eta$ from $K_+$ to $\widehat{\mathbb{C}} \smallsetminus \mathcal{B} (R)$ conjugates $g$ to $R$ and that $F =g^{-1}$ in $K_+$. Then $(\Phi \circ f \circ \eta) (y)$ is not an attracting periodic point of $R$. The limit set of $\bigcup_{n=0}^{+\infty} F^n (y)$ is mapped to the limit set of $\bigcup_{n=0}^{+\infty} R^{-n} ((\Phi \circ f \circ \eta) (y))$ by $\Phi \circ f \circ \eta$.  Since $R$ has no Fatou component being a Siegel disk or a Herman ring, the limit set of $\bigcup_{n=0}^{+\infty} R^{-n} ((\Phi \circ f \circ \eta) (y))$ must be contained in $J(R)$, so the limit set of $\bigcup_{n=0}^{+\infty} F^n (y)$ is contained in $\Lambda_+$.

    Therefore, the limit set of $\bigcup_{n=0}^{+\infty} F^n (y)$ is contained in $\Lambda_+$ for all $y \in U_z$ and condition~(\ref{it:U}) in Definition~\ref{def:minimal} holds. Using Remark~\ref{re:importantadd}, we now conclude that $\Lambda_+$ is minimal.
\end{proof}

We can now conclude the first main result of this paper.

\begin{proof}[Proof of Theorem~\ref{thm:misha}]
    First, $\Lambda_+$ is not totally disconnected as it is homeomorphic to $J(R)=\partial \mathcal B(R)$.
    The condition that $R$ has an attracting periodic point in $\setRS\smallsetminus \mathcal{B}(R)$ implies that $g$ has an attracting fixed point in the interior of $K_+$. The set $K_+$ is closed and is a proper subset of $\setRS$, as it is disjoint from $\mathbb{D}$. Theorem~\ref{thm:misha} is thus a consequence of Lemmas~\ref{le:islimitset}, \ref {le:fnonsing}, Theorem~\ref{thm:existencemmeasure}, Propositions \ref{Frelativelyhyperbolic}, and \ref{prop:LambdaMinimal}.
\end{proof}

\subsection{Limit sets of Bullett--Penrose correspondences}\label{sec:bullett}
In this subsection, we study the limit sets of the Bullett--Penrose correspondences.
We briefly introduced the relevant concepts for Lemma~\ref{le:Bullettinverseetc} in Section~\ref{sec:introduction}, but for the full definitions we refer the reader to \cite{bullett2017mating}. We shall show the following lemma.
\begin{lemma}\label{le:Bullettinverseetc}
    Let $a\in \setRS$ belong to the interior of a hyperbolic component of the modular Mandelbrot set. Then $F_a$ is invariantly inverse-like on $\Lambda_{a,+}$ and on $\partial \Lambda_{a,+}$. Furthermore, for all except finitely many $x\in \Lambda_{a,+}$, $F_a$ is relatively hyperbolic on $\Lambda_+(x) =\partial \Lambda_{a,+}$ and $\Lambda_+(x)$ is minimal.
\end{lemma}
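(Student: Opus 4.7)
The plan is to leverage the mating structure due to Bullett--Harvey--Lomonaco: since $a$ lies in the interior of a hyperbolic component of the modular Mandelbrot set, the correspondence $F_a$ is a quasiconformal mating between $\operatorname{PSL}(2,\mathbb{Z})$ and a degree-$2$ rational map $R_a$ whose non-repelling dynamics on the Fatou set consist of an attracting cycle plus a single parabolic fixed point $\omega_a$ (coming from the group side of the mating). In particular, a hybrid conjugacy identifies the single-valued branch $g_a$ of $F_a^{-1}$ defined on $\Lambda_{a,+}$ with the restriction of $R_a$ to its filled Julia set; under this identification, $\partial \Lambda_{a,+}$ corresponds to $J(R_a)$ and $\operatorname{int}(\Lambda_{a,+})$ to the Fatou part of the filled Julia set. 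The resulting $R_a$ is geometrically finite with $\omega_a$ its unique parabolic cycle, and with all critical points attracted to the attracting cycle.

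First I would establish that $F_a$ is invariantly inverse-like on $\Lambda_{a,+}$ and on $\partial \Lambda_{a,+}$. Because $g_a$ is a single-valued branch of $F_a^{-1}$ whose image lies in $\Lambda_{a,+}$ and $g_a$ (being conjugate to a degree-$2$ map) is $2$-to-$1$, the $F_a$-image of every point of $\Lambda_{a,+}$ consists of two points of $\Lambda_{a,+}$, giving forward invariance of $\Lambda_{a,+}$ and (by restriction to boundary behaviour under the hybrid conjugacy) of $\partial\Lambda_{a,+}$. Uniqueness of the $F_a^{-1}$-preimage inside $\Lambda_{a,+}$ comes from the mating decomposition $\setRS = \Lambda_{a,+}\cup \Lambda_{a,-}$, in which the other branch $h_a$ of $F_a^{-1}$ maps $\Lambda_{a,+}$ into $\Lambda_{a,-}$ away from $\partial \Lambda_{a,+}$; the only possible source of non-uniqueness on the common boundary is the finite forward orbit of $\omega_a$, which we exclude by the choice of $x$.

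Next I would verify relative hyperbolicity of $F_a$ on $\Lambda_+(x)=\partial \Lambda_{a,+}$ by transferring each of the three conditions of Definition~\ref{def:wellbehaved} via the hybrid conjugacy. Condition~(\ref{it:rel1}) follows because the critical values of $R_a$ lie in the attracting Fatou basin, hence off $J(R_a)$. Condition~(\ref{it:thirdrel}) holds because the postcritical set of $R_a$ accumulates on the attracting cycle and contains the orbit of $\omega_a$; its intersection with $J(R_a)$ is thus the finite forward orbit of $\omega_a$, which is $\Omega_+(x)$. Condition~(\ref{it:attracting}) is checked via the Leau--Fatou flower theorem exactly as in Proposition~\ref{locally attracting}: the branch of $F_a^q$ fixing each point of $\Omega_+(x)$ has attracting and repelling petals, and the pinched neighborhood in Definition~\ref{def:Omega-attracting} is the union of the closures of the attracting petals, which locally coincides with $\Lambda_+(x)$ near $\omega_a$ because $J(R_a)$ meets a neighborhood of $\omega_a$ in precisely these attracting petals for a geometrically finite rational map.

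Lastly I would verify minimality. Empty interior of $\partial\Lambda_{a,+}$ and absence of proper closed forward-invariant subsets are inherited from the analogous properties of $J(R_a)$, via the hybrid conjugacy and the density of backward orbits of $R_a$ in $J(R_a)$; disjointness from $\sing\smallsetminus \mathcal{CV}_{F_a^{-1}}$ reduces to a direct algebraic calculation with the defining polynomial of $F_a$, showing that points where $F_a$ fails to be locally $2$-to-$2$ are already critical values of $F_a^{-1}$. Finiteness of $\Omega_+(x)$ is immediate from geometric finiteness of $R_a$. Condition~(\ref{it:U}) can then be verified using Remark~\ref{remark:suff} by taking $U$ to be a pinched neighborhood of $\partial \Lambda_{a,+}$ inside $\Lambda_{a,+}$, pinched at $\Omega_+(x)$, on which $g_a$ is defined and $g_a(U)\subseteq U$; such a $U$ exists because $g_a$ is expanding on $J(R_a)$ away from $\omega_a$ and the attracting petals at $\omega_a$ are $g_a$-forward-invariant. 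The main obstacle I expect is the careful bookkeeping in the uniqueness step of the invariantly inverse-like property on $\partial\Lambda_{a,+}$: the images under $h_a$ of $\omega_a$ and its finite orbit may lie in $\partial\Lambda_{a,+}$, and ruling these out requires an explicit identification of the finitely many ``bad'' points of $\Lambda_{a,+}$ to be removed --- exactly the finitely many exceptions in the statement of the lemma.
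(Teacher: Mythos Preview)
Your overall strategy coincides with the paper's: transport everything through the hybrid conjugacy to a parabolic quadratic rational map (in the paper this is the explicit map $P_A=z+1/z+A$, with the unique parabolic fixed point corresponding to $0$, so in fact $\Omega_+(x)=\{0\}$ rather than a finite orbit). Two of your verification steps, however, contain genuine gaps.

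First, your check of condition~(\ref{it:thirdrel}) in Definition~\ref{def:wellbehaved} treats $\mathcal{PC}_{F_a^{-1}}$ as if it were the postcritical set of $R_a$. But $F_a^{-1}$ is a $2$-to-$2$ correspondence, and its critical values include contributions not visible from the single branch $g_a$. The paper closes this gap by observing that on the complement of $\Lambda_{a,+}\cup\Lambda_{a,-}$ the correspondence $F_a^{-1}$ is conjugate to a pair of M\"obius maps (so has no critical points there), and then uses the complete invariance of $\Lambda_{a,+}\cup\Lambda_{a,-}$, the $F_a$-backward invariance of $\Lambda_{a,-}$, and $\Lambda_{a,+}\cap\Lambda_{a,-}=\{0\}$ to force $\mathcal{PC}_{F_a^{-1}}\cap\partial\Lambda_{a,+}\subseteq\{0\}$. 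Without this global argument your postcritical-set inclusion is unjustified.

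Second, your use of Remark~\ref{remark:suff} has the invariance backwards and the neighborhood on the wrong side. The remark requires $F_a(U)\subseteq U$, not $g_a(U)\subseteq U$; and a pinched neighborhood of $\partial\Lambda_{a,+}$ lying \emph{inside} $\Lambda_{a,+}$ cannot contain $\partial\Lambda_{a,+}$ in its interior, so it is not a pinched neighborhood in the sense required. Expansion of $g_a$ on $J(R_a)$ works against $g_a(U)\subseteq U$ in any case. The paper instead takes $U=\Phi(U_{K(P_A)})$, a pinched neighborhood of the \emph{entire} set $\Lambda_{a,+}$ (pinched at $F_a(0)$), coming from a pinched neighborhood of the filled Julia set $K(P_A)$; for such a neighborhood one has the standard inclusion $P_A^{-1}(U_{K(P_A)})\subseteq U_{K(P_A)}$, which under the conjugacy is precisely $F_a(U)\subseteq U$. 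With $U$ chosen this way the remaining requirements of Remark~\ref{remark:suff} follow from the fact that $P_A^{-n}(w)\to J(P_A)$ for all $w$ off the attracting cycle.
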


\begin{proof}First, by interchanging the roles of $F_a$ and $F_a^{-1}$ and the roles of $\Lambda_{a,+}$ and $\Lambda_{a,-}$, there exists by \cite[Main Theorem]{bullett2020mating} and its proof, a parameter $A$ in the interior of a hyperbolic component of the \emph{parabolic Mandelbrot set} such that $F_a^{-1}$ is a mating between the rational map $P_A=z+1/z+A$ and the modular group. See \cite[Introduction]{bullett2020mating} for a definition of the parabolic Mandelbrot set. 
In particular, the 2-to-1 branch $f_a$ of $F_a^{-1}$ under which $\Lambda_{a,+}$ is invariant is hybrid equivalent to $P_A$, see \cite[Definition 3.3]{LOMONACO_2015} for the definition of hybrid equivalence. Let $\Phi$ be a conjugation such that $P_A=\Phi^{-1}\circ f_a\circ \Phi$ on a pinched neighborhood $U_{K(P_A)}$ of the filled Julia set $K(P_A)$ of $P_A$, pinched at the parabolic fixed point $\infty$ and its other preimage, to a pinched neighborhood $\Phi(U_{K(P_A)})$ of $\Lambda_{a,+}(x)$, pinched at $F_a(0)$. Note that $\Phi(\infty)=0$.

The map $P_A$ has degree 2, hence has 2 critical points, $c_0$ and $c_1$, and has a unique parabolic fixed point and a unique attracting periodic orbit. One of the critical points of $P_A$ belong to the basin of attraction of the attracting periodic orbit and the other to the basin of attraction of the parabolic fixed point. In particular, $P_A$ has no critical points on its Julia set. Moreover, as 0 is a parabolic fixed point of $F_a$ and $F_a(z)=\Phi\circ P_A^{-1}\circ \Phi^{-1}$ on $\Phi(U_{K(P_A)})$, it follows that $0$ is the only parabolic periodic point of $F_a$ contained in $\Lambda_{a,+}$. Since $P_A^{-n}(x)\to J(P_A)$ as $n\to +\infty$ for all $x$ not belonging to the attracting periodic orbit of $P_A$, we have that $\Lambda(x)=\partial\Lambda_{a,+}$ for all $x\in \Lambda_{a,+}$ except those belonging to the image under $\Phi$ of the attracting periodic orbit of $P_A$. Let us fix an $x\in \Lambda_{a,+}$ such that $\Lambda_+(x)=\partial \Lambda_{a,+}$. Recall that the set of points belonging to an attracting periodic orbit of a rational map is finite.

Let us first show that $F_a$ is relatively hyperbolic on $\Lambda_+(x)$. We have mentioned that $\Omega_+=\{0\}$.
Since $P_A$ has no critical points on $J(P_A)$, each point in $\Lambda_+(x)$ has two distinct images under $F_a$. As $F_a$ is $2$-to-$2$, this implies that $\Lambda_+(x)\cap \sing=\emptyset$. Thus, condition~(\ref{it:rel1}) in Definition~\ref{def:wellbehaved} holds.

 
Now, following the proof of \cite[Theorem~B]{bullett2017mating} it is quite straightforward to see that condition~(\ref{it:attracting}) in Definition~\ref{def:wellbehaved} holds, see also \cite[Figure~6]{bullett2017mating}. Next, the complement of $\Lambda_{a,+}\cup \Lambda_{a,-}$ contains no critical points of $F_a^{-1}$, since $F_a^{-1}$ is conjugated to a pair of M\"obius transformations there. As $\Lambda_{a,+}\cup \Lambda_{a,-}$ is completely invariant, the fact that  $\Lambda_{a,-}$ is backward-invariant, and $\Lambda_{a,+}\cap \Lambda_{a,-}=\{0\}$, condition~(\ref{it:thirdrel}) in Definition~\ref{def:wellbehaved} follows. We have thus concluded that $F_a$ is relatively hyperbolic on $\Lambda_{a,+}$.

We turn to minimality of $\partial\Lambda_{a,-}$. We have already shown that $\Lambda_{a,+}\cap\sing=\emptyset$. Condition~(\ref{it:min1}) then holds since,  by the existence of the conjugation between $F_a$ and the inverse of $P_A$, $\partial\Lambda_{a,+}$ has no proper closed subsets that are forward invariant under $F_a$, and $\partial \Lambda_{a,-}$ is the boundary of a closed set, and hence has empty interior. 

Next, $\Phi(U_{K(P_A)})$ is a pinched neighborhood of $\Lambda_{+,a}$, pinched at $F_a(0)$ such that $F_a(z)=\Phi\circ P_A^{-1}\circ \Phi^{-1}(z)$ and by definition of $\Lambda_{a,+}$, we may choose $U_{K(P_A)}$ such that $F(\Phi(U_{K(P_A)}))\subseteq \Phi(U_{K(P_A)})$, see also \cite[Propsition 5.2  and Figure 6]{bullett2017mating}. Denote by $y$ the point in $F_a(0)$ not equal to $0$. Then we can, by the dynamics of $F_a$ near $y$, find an $\epsilon>0$ such that $F_a(\overline {B(y,\epsilon)})\subseteq \Phi(U_{K(P_A)})$. Then $U=\overline{B(y,\epsilon)}\cup \Phi(U_{K(P_A)})$ satisfies $F(U)\subseteq U\subseteq \Phi(U_{K(P_A)})$, $\Phi\circ P_A\circ \Phi^{-1}\circ F_a(z)=\{z\}$ for each $z\in U_{K(P_A)}$ and $U_{K(P_A)}\neq \setRS$. In order to conclude condition~(\ref{it:U}) of Defintion~\ref{def:minimal}, it suffices by Remark~\ref{remark:suff} to show that $F_a^n(z)\to \partial \Lambda_{a,+}$, as $n\to +\infty$, for each $z\in U$ except maybe a finite set of $z$. Now, taking $z= \Phi(w)$, where $w$ does not belong to an attracting periodic orbit of $P_A$, $F_a^n(z)\to \partial \Lambda_{a,+}$ as $n\to +\infty$ again follows from the fact that $P_A$ is a rational map with no critical points on its Julia set, since then $P_A^{-n}(w)\to J(P_A)$ as $n\to +\infty$. Since $\Omega_+=\{0\}$, minimality of $\partial \Lambda_{+,a}$ follows. 
\end{proof}

We are now in position to conclude the second main result of this paper. 
 \begin{proof}[Proof of Theorem~\ref{thm:BP}]
     The condition that $a$ belongs to the interior of a hyperbolic component implies that the branch $f_a$ of $F_a^{-1}$ under which $\Lambda_{a,+}$ is invariant has an attracting periodic point in the interior of $\Lambda_{a,+}$. The theorem is now a direct consequence of Theorem~\ref{thm:existencemmeasure} and Lemma~\ref{le:Bullettinverseetc} together with the fact that $\partial \Lambda_{a,+}$ is a connected set that is not a singleton.
 \end{proof}
 In the same way, the same results hold for $F^{-1}_a$ and $\partial \Lambda_{a,+}$. We thereby deduce the following corollary.

\begin{cor}
    $1\leq \operatorname{HD}(\partial (\Lambda_{a,+}\cup \Lambda_{a,-}))<2$. As a consequence, $\partial (\Lambda_{a,+}\cup \Lambda_{a,-})$ has zero area.
\end{cor}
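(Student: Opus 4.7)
The plan is to deduce the corollary from Theorem~\ref{thm:BP} applied in both directions (to $F_a$, giving $1\le \operatorname{HD}(\partial\Lambda_{a,+})<2$, and to $F_a^{-1}$, giving $1\le \operatorname{HD}(\partial\Lambda_{a,-})<2$ by the parenthetical remark preceding the corollary) together with elementary topology and finite stability of Hausdorff dimension. The main ingredient beyond Theorem~\ref{thm:BP} is the fact, established in the proof of Lemma~\ref{le:Bullettinverseetc}, that $\Lambda_{a,+}\cap \Lambda_{a,-}=\{0\}$.

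For the upper bound, I would observe the containment
\begin{equation*}
\partial(\Lambda_{a,+}\cup\Lambda_{a,-})\subseteq \partial\Lambda_{a,+}\cup \partial\Lambda_{a,-}.
\end{equation*}
Indeed, if $x$ is a boundary point of the union, then since $\Lambda_{a,\pm}$ are both closed, $x$ lies in at least one of them (say $\Lambda_{a,+}$), and by definition every neighborhood of $x$ contains points outside $\Lambda_{a,+}\cup\Lambda_{a,-}\supseteq \Lambda_{a,+}$, so $x\in\partial\Lambda_{a,+}$. Finite stability of Hausdorff dimension then gives
\begin{equation*}
\operatorname{HD}\bigl(\partial(\Lambda_{a,+}\cup\Lambda_{a,-})\bigr)\leq \max\bigl\{\operatorname{HD}(\partial\Lambda_{a,+}),\operatorname{HD}(\partial\Lambda_{a,-})\bigr\}<2.
\end{equation*}

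For the lower bound, I would show that $\partial\Lambda_{a,+}\smallsetminus\{0\}\subseteq \partial(\Lambda_{a,+}\cup\Lambda_{a,-})$. Fix $x\in\partial\Lambda_{a,+}\smallsetminus\{0\}$. Since $\Lambda_{a,+}\cap\Lambda_{a,-}=\{0\}$ and $\Lambda_{a,-}$ is closed, there exists a neighborhood $V$ of $x$ disjoint from $\Lambda_{a,-}$. On $V$ we have $(\Lambda_{a,+}\cup\Lambda_{a,-})\cap V=\Lambda_{a,+}\cap V$, so the topological boundary of the union coincides with the boundary of $\Lambda_{a,+}$ on $V$, which forces $x\in\partial(\Lambda_{a,+}\cup\Lambda_{a,-})$. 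Removing a single point does not change Hausdorff dimension, so
\begin{equation*}
\operatorname{HD}\bigl(\partial(\Lambda_{a,+}\cup\Lambda_{a,-})\bigr)\geq \operatorname{HD}(\partial\Lambda_{a,+}\smallsetminus\{0\})=\operatorname{HD}(\partial\Lambda_{a,+})\geq 1
\end{equation*}
by Theorem~\ref{thm:BP}. The consequence on area is immediate: a Borel set in $\setRS$ with $\operatorname{HD}<2$ has vanishing $2$-dimensional Hausdorff measure, which on the sphere is a positive multiple of spherical area.

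There is no substantive obstacle; the argument is a set-theoretic manipulation. The only subtlety is to handle the shared parabolic point $0\in\Lambda_{a,+}\cap\Lambda_{a,-}$, which could a priori fail to lie in $\partial(\Lambda_{a,+}\cup\Lambda_{a,-})$, but excising it costs nothing for Hausdorff dimension. Both bounds are therefore simple consequences of Theorem~\ref{thm:BP} and its symmetric counterpart for $F_a^{-1}$.
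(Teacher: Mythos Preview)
Your proposal is correct and matches the paper's approach: the paper gives no explicit proof, simply noting that the analogous result holds for $F_a^{-1}$ and $\partial\Lambda_{a,-}$ and then stating the corollary as a direct consequence. Your set-theoretic argument, using $\partial(\Lambda_{a,+}\cup\Lambda_{a,-})\subseteq\partial\Lambda_{a,+}\cup\partial\Lambda_{a,-}$ for the upper bound and $\partial\Lambda_{a,+}\smallsetminus\{0\}\subseteq\partial(\Lambda_{a,+}\cup\Lambda_{a,-})$ (via $\Lambda_{a,+}\cap\Lambda_{a,-}=\{0\}$) for the lower bound, is precisely the elementary deduction the paper leaves to the reader.
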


\section{Constructing conformal measures}\label{sec:2}
In this section, we construct the conformal measures using the Patterson--Sullivan construction. Let $x\in \setRS$. Recall the definition of the Poincar\'e series $P_s(x)$ \eqref{eq:poincare} and the critical exponent $\deltacrit(x)$ Definition~\ref{def:crit}. We have the following important proposition.

\begin{prop}\label{prop:fund}Let $F$ be an (anti)holomorphic correspondence that is invariantly inverse-like on a closed set $S$. Suppose that $x\in S$ is such that $0<\delta_{\operatorname{crit}}(x)<+\infty$ and $\Lambda_+(x)\cap \sing=\emptyset$. Then there exists a $\delta_{\operatorname{crit}}(x)$-conformal measure on $\Lambda_+(x)$.
\end{prop}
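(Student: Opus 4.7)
The plan is to adapt the Patterson--Sullivan construction to the correspondence setting. Set $\delta := \deltacrit(x) \in (0,+\infty)$. For each $s > \delta$, I define the atomic probability measure
\[
\mu_s := \frac{1}{P_s(x)} \sum_{n=0}^{+\infty}\sum_{j=1}^{M_n} |Df_{n,j}(x)|^s\, \delta_{f_{n,j}(x)},
\]
where $\delta_y$ denotes the Dirac mass at $y$. If $P_\delta(x)=+\infty$, the normalisation $P_s(x)\to +\infty$ as $s\to \delta^+$ forces the mass to concentrate on arbitrarily late generations. In the alternative case $P_\delta(x) < +\infty$, I would invoke Patterson's trick: choose a non-decreasing slowly varying function $h \colon [1,+\infty) \to [1,+\infty)$ (i.e.\ $h(rt)/h(r)\to 1$ as $r\to +\infty$ for every $t>0$) such that the modified Poincar\'e series $\widetilde P_s(x) := \sum_{n,j} h(|Df_{n,j}(x)|^{-1}) |Df_{n,j}(x)|^s$ still has critical exponent $\delta$ but now diverges at $s=\delta$; I then replace $\mu_s$ by the analogous modified atomic measure. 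The remainder of the argument is parallel in both cases.

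By compactness of $\setRS$ and the Banach--Alaoglu theorem, I would extract a weak-$*$ subsequential limit $\mu = \lim_k \mu_{s_k}$ along some $s_k \to \delta^+$. This $\mu$ is a Borel probability measure on $\setRS$. For any fixed $N\in \bN$, the contribution to $\mu_{s_k}$ from atoms of generation $n\le N$ is $O(1/P_{s_k}(x))$, hence tends to $0$ (or vanishes by slow variation of $h$ in the modified case), so $\operatorname{supp}(\mu) \subseteq \Lambda_+(x)$.

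The crux is to verify that $\mu$ is $\delta$-conformal. Let $(A, f)$ be a special pair for $\Lambda_+(x)$. By the chain rule, if $f_{n,j}(x)\in A$, then $f \circ f_{n,j}$ is one of the branches of $F^{n+1}$, say $f_{n+1,j'}$, and
\[
|Df_{n+1,j'}(x)| = |Df(f_{n,j}(x))| \cdot |Df_{n,j}(x)|.
\]
The invariantly inverse-like hypothesis on $S \supseteq \Lambda_+(x)$, combined with $\Lambda_+(x)\cap \sing=\emptyset$, guarantees that no two orbit points of $\Lambda_+(x)$ are collapsed by $f$, so every atom of $\mu_s$ in $f(A)\cap \Lambda_+(x)$ corresponds (up to finitely many exceptions at generation $0$) to a unique atom in $A\cap \Lambda_+(x)$; the defining property $f(A\smallsetminus \Lambda_+(x))\cap \Lambda_+(x)=\emptyset$ of a special pair rules out contributions from outside $\Lambda_+(x)$. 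Summing over orbit points in $A$ yields
\[
\mu_s(f(A)) = \int_A |Df|^s\, \mathrm{d}\mu_s + \varepsilon_s,
\]
with $\varepsilon_s = O(1/P_s(x))\to 0$. Passing to the weak-$*$ limit, using continuity of $|Df|^\delta$ on a neighbourhood of $A$ (since $A$ avoids $\sing$) and a standard approximation argument to ensure $\mu(\partial A)=\mu(\partial f(A))=0$, gives the desired conformality identity \eqref{eq:conf}.

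The main obstacle is the Patterson modification in the case $P_\delta(x) < +\infty$: the slowly varying prefactor $h$ must be designed so that, after normalisation and weak-$*$ passage, it disappears in the chain-rule identity. This is standard for rational maps and Kleinian groups, but in the correspondence setting one must verify that the multi-valued branching does not inflate the $h$-induced error beyond what slow variation can absorb into $\varepsilon_s$. A secondary subtlety is the boundary analysis, handled by the usual shrinking argument reducing to special pairs with $\mu$-null topological boundary.
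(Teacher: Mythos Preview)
Your overall architecture is the same as the paper's---Patterson--Sullivan measures $\mu_s$, weak-$*$ limit, verification of \eqref{eq:conf} via the chain rule---but the central step, the claimed bijection between atoms of $\mu_s$ in $f(A)$ and atoms in $A$, has a genuine gap.

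The hypothesis $\Lambda_+(x)\cap\sing=\emptyset$ ensures that all branches of $F$ are locally well-defined near $\Lambda_+(x)$; it says nothing about critical \emph{points} of $F$, i.e.\ points where a forward branch $f$ has $Df=0$. Such points can and do lie on $\Lambda_+(x)$ (for the Bullett--Penrose family the parabolic fixed point $0$ is a critical point of one branch of $F_a$). Near a critical value $w_j=f(z_{0,j})\in\mathcal{CV}_F$ with $z_{0,j}\in\overline{A}\cap\Lambda_+(x)$, the branch $f$ is $d$-to-$1$ for some $d\ge 2$, so an orbit point $w$ close to $w_j$ may lie in $f(A\cap V)$ via some $z\in A\cap V$, while its actual orbit-predecessor $y\in S\cap F^{-1}(w)$ is a \emph{different} $f$-preimage, possibly outside $A\cap V$. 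Your sentence ``no two orbit points \dots\ are collapsed by $f$'' does not address this; the problem is not collapsing but rather that the unique orbit-predecessor of $w$ need not be the point of $A$ that $f$ sends to $w$. The paper handles this by isolating the finitely many such $w_j$, taking small neighborhoods $U_j$ of the $z_{0,j}$ on which $|Df|^{\deltacrit(x)}<\epsilon/|W|$ (this is where $\deltacrit(x)>0$ is used), showing the bijection holds on the complement $\mathcal B$, and bounding both $\mu(f(\mathcal C))$ and $\int_{\mathcal C}|Df|^{\deltacrit(x)}\,\mathrm d\mu$ by $\epsilon$. Without this, your identity $\mu_s(f(A))=\int_A|Df|^s\,\mathrm d\mu_s+\varepsilon_s$ is unjustified.

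A smaller point: for the divergent case you invoke the classical slowly-varying modifier $h(|Df_{n,j}(x)|^{-1})$, whereas the paper uses an exponent that switches from $2\deltacrit(x)-s$ to $s$ at a threshold $n<h(1/(s-\deltacrit(x)))$. Both devices work, but with your version you would still need to verify that the $h$-factors cancel in the chain rule up to an error absorbed by slow variation; this is routine once the critical-point issue above is resolved, but it is not literally the ``standard'' argument since here branches of $F^n$ are compositions of single-step branches and the modifier sees the composite derivative, not the step count.
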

\begin{proof}
The arguments here are closely related to those found in e.g., the proof of \cite[Theorem~4.1]{mcmullen}, which is a similar statement but for the rational setting.
Recall from Remark~\ref{remark:invlike} that $\Lambda_+(x)\subseteq S$ and that $F$ is invariantly inverse-like on $\Lambda_+(x)$.

We shall handle the proof in the two separate cases, one in which $P_s(x)\to +\infty$ as $s\searrow \delta_{\operatorname{crit}}(x)$ and the other in which $P_s(x)\not\to \infty$ as $s\searrow \delta_{\operatorname{crit}}(x)$.

\smallskip \emph{Case 1.} Recall the definition of the Poincar\'e series $P_s(x)$ given in \eqref{eq:poincare}. Suppose that $P_s(x)\to +\infty$ as $s\searrow \delta_{\operatorname{crit}}(x)$. For each $s>\delta_{\operatorname{crit}}(x)$, define the measure
    \begin{equation*}
    	\mu_s\coloneqq \frac{1}{P_s(x)}     \sum_{n=0}^{+\infty} \sum_{j=1}^{M_n}|Df_{n,j}(x)|^s\delta_{f_{n,j}(x)},
    \end{equation*}
    where $\delta_{f_{n,j}(x)}$ is the Dirac mass at $f_{n,j}(x).$

    \smallskip \emph{Case 2}. Suppose now that $\lim_{s\searrow \delta_{\operatorname{crit}}(x)}P_s(x)$ does not diverge. We then alter the Poincar\'e series $P_s(x)$ slightly to ensure divergence. More specifically, define 
\begin{equation*}
    	t(n)\coloneqq\begin{cases}
        2\delta_{\operatorname{crit}}(x)-s& \text{ if }n< h (1/(s-\deltacrit(x)) ); \\
        s & \text{ otherwise, }
    \end{cases}
\end{equation*}
    where $h\:\bR\to \bR$ and $h(z)$ is sufficiently quickly increasing as $z\to +\infty$, depending on $F$ and $x$.
    We then define the new Poincar\'e series
    \begin{equation*}
    	P_s(x)\coloneqq  \sum_{n=0}^{+\infty} \sum_{j=1}^{M_n}|Df_{n,j}(x)|^{t(n)}
    \end{equation*}
    and the corresponding measures 
    \begin{equation*}
    	\mu_s\coloneqq \frac{1}{P_s(x)}     \sum_{n=0}^{+\infty} \sum_{j=1}^{M_n}|Df_{n,j}(x)|^{t(n)}\delta_{f_{n,j}(x)}.
    \end{equation*}

\smallskip We shall now show, regardless of whether we are in Case 1 or 2,  that each weak$^*$-limit, i.e., each weak$^*$-accumulation point, $\mu$ of the measures $\mu_s$ as $s\searrow \delta_{\operatorname{crit}}(x)$ is $\delta_{\operatorname{crit}}(x)$-conformal. We have that $\mu$ is supported on $\Lambda_+(x)$ since $P_s(x)\to +\infty$.

Regardless if we are in Case 1 or 2, take a special pair $(A,f)$ of $\Lambda_+(x)$, let $\mu$ be any weak$^*$-limit of the measures $\mu_s$ as $s\searrow \deltacrit(x)$ and let $\epsilon>0$ be given. As $\mu$ is supported on $\Lambda_+(x)$ and $(A,f)$ is a special pair, to conclude the proposition, we need to show that 
\begin{equation}\label{eqconffixed}
    \mu(f(A\cap \Lambda_+(x)))=\int_{A\cap \Lambda_+(x)}\!|Df|^{\deltacrit(x)}\,\mathrm{d}\mu.
\end{equation} 

Let us write
    \begin{align*}
    	m(\cdot)&\coloneqq\min\bigl\{|Df(\cdot)|^s, \, |Df(\cdot)|^{2\delta_{\operatorname{crit}}(x)-s}\bigr\}  \qquad \text{and}\\
    	M(\cdot)&\coloneqq\max\bigl\{|Df(\cdot)|^s, \, |Df(\cdot)|^{2\delta_{\operatorname{crit}}(x)-s}\bigr\}.
    \end{align*}
Since $\Lambda_+(x)\cap \sing=\emptyset$, we can find an open neighborhood $V$ of $\Lambda_+(x)$, and a real number $K\in (0,\infty)$ such that $|Df(y)|<K$ for all $y\in A\cap V$. This implies that $|Df|^s$, $m$, and $M$ converge uniformly on $V\cap A$ to $|Df|^{\deltacrit(x)}$ as $s\searrow \deltacrit(x)$, since $\deltacrit(x)>0$. Furthermore, we may choose $V$ such that $ \overline V\cap \sing =\emptyset$.
Recall that $\mathcal{CV}_{F^{-1}} \subseteq \sing$, so $\mathcal{CV}_{F^{-1}}\cap V=\mathcal{CV}_{F^{-1}}\cap \Lambda_+(x) =\emptyset$.

Suppose that $w_j\in f(\overline{A}\cap \Lambda_+(x))$ is such that there exists a sequence $\{z_n\}_{n\in \mathbb N}$ in $ A\cap V$ such that $|Df(z_n)|\to 0$, $f(z_n)\to w_j$, $z_n\to z_{0,j}$, and $z_{0,j}\in \overline{ A}\cap \Lambda_+(x)$. Then $w_j\in \mathcal{CV}_{F}$ and so there are finitely many such points $w_j$ and $z_{0,j}$. We denote by $W$ the set of all $w_j$ with the property above. For each $w_j\in W$, take an open neighborhood $U_{j}$ of $z_{0,j}$ such that all branches of $F$ are defined in $U_{j}$, and let $f_{w_j}$ be the unique branch of $F$ defined in $U_{j}$ such that $f_{w_j}(z_{0,j})=w_j$. Let the order of the critical point $z_{0,j}$ of $f_{w_j}$ be $d_{w_j}$. The set $U_{j}$ may be chosen so that if $f_{w_j}(z)=w$ and $z\neq z_{0,j}$, then $f_{w_j}^{-1}(w)$ contains $d_{w_j}$ points and $V$ may be chosen so that $F^{-1}(w)\cap V=f_{w_j}^{-1}(w)$. 
For each $w_j\in W$, we can further choose $U_{j}$ such that $\max \bigl \{|Df_{w_j}(y)|^{2\deltacrit(x)-s}, \, |Df_{w_j}(y)|^{s} \bigr\}<\epsilon/|W|$ for each $y\in U_{j}$, and each $s$ sufficiently close to $\deltacrit(x)$, where $|W|$ denotes the cardinality of $W$. 
This is, again, possible as $\deltacrit(x)>0$. We then have 
\begin{equation}\label{eq:trivialU}
    \sum_{\omega_j\in W}\int_{U_{j}}\!|Df_{w_j}|^{\deltacrit(x)} \,\mathrm{d}\mu<\epsilon.
\end{equation}

Note now that for each $w\in \Lambda_+(x)\smallsetminus \mathcal C_F,$
we can find a neighborhood $V_w$ of $w$ and a univalent branch $g$ of $F^{-1}$ defined on $V_w$ such that $g(w)=g_{F,\Lambda_+(x)}(w)$ and if $\zeta\in V_w\cap \Lambda_+(x)$, then $g(\zeta)=g_{F,\Lambda_+(x)}(\zeta)$. 
As $F$ is invariantly inverse-like on $S$ and on $\Lambda_+(x)$, $\Lambda_+(x)\cap \sing=\emptyset$, and $x\in S$, we can thereby choose $V$ above such that if $w\in f(z)\cap \bigcup_{n=1}^{+\infty}F^n(x)$ for some $z\in A\cap V$, but $A\cap V$ does not contain any point $y$ in $F^{-1}(w)\cap \bigcup_{n=0}^{+\infty}F^n(x)$ with $f(y)=w$, then there exists a unique $w_j\in W$ such that $y\in U_{j}$ and $f_{w_j}(y)=w$. Furthermore, there exists a unique $y_0\in U_{j}$ such that $y_0\in F^{-1}(w)\cap \bigcup_{n=0}^{+\infty}F^n(x)$, since $x\in S$ and $F$ is invariantly inverse-like on $S$.

Now, in order to simplify notation, write $\mathcal A\coloneqq f^{-1}\bigl(\bigcup_{w_j\in W} \bigl( f_{w_j}(U_{j}) \bigr)\bigr)$,  $ \mathcal B\coloneqq (A\cap V)\smallsetminus \mathcal A$, and $\mathcal C\coloneqq A\cap V \cap \mathcal A$.  Define also
\begin{equation*}
	G_s(B)\coloneqq 1 / P_s(x)
\end{equation*}
if 
$x \in B$, and $G_s(B)\coloneqq 0$ if 
$x \notin B$, for each $B\subseteq \setRS$. 
Then, by definition of $\mathcal C$, 
\begin{equation}\label{eq:name}
     f(\mathcal C)\subseteq \bigcup_{w_j\in W}f_{w_j}(U_{j}).
\end{equation}
By construction of $\mu_s$, definition of $f_{w_j}$, and the choice of $U_{j}$, regardless if we are in Case 1 or 2, we have

\begin{align*}
     \mu_s\Bigl(\bigcup_{w_j\in W}f_{w_j}(U_{j})\Bigr)< \epsilon + G_s\Bigl(\bigcup_{w_j\in W}f_{w_j}(U_{j})\Bigr),
\end{align*}
for all $s>\deltacrit(x)$ sufficiently close to $\deltacrit(x)$.
Since $G_s(B)\to 0$ as $s\searrow \deltacrit(x)$ for each $B\subseteq \setRS$, this shows that 
\begin{equation}\label{eq:leqepesilon}
    \mu(f(\mathcal C))\leq \epsilon
\end{equation} as $\bigcup_{w_j\in W}f_{w_j}(U_{j})$ is an open set.

Now, in Case 1, by construction of $\mu_s$, using the chain rule we have,
\begin{align*}
     \mu_s(f(  \mathcal B))= \int_{  \mathcal B} \!|Df|^s \,\mathrm{d}\mu_s+G_s(f(\mathcal B)).
\end{align*}
 It follows that as $s\searrow \deltacrit(x)$, since $f$ is locally injective on $\mathcal B$ and $\mathcal B\subseteq A\cap V$, we have
\begin{equation}\label{eq:eqbla}
     \mu(f(  \mathcal B))= \int_{  \mathcal B} \!|Df|^{\deltacrit(x)} \,\mathrm{d}\mu\leq \int_{A\cap V} \!|Df|^{\deltacrit(x)} \,\mathrm{d}\mu.
\end{equation}Then, using \eqref{eq:name} and \eqref{eq:leqepesilon}, we have
\begin{equation}\label{eq:confineq1}
    \mu(f(A\cap V))=  \mu(f(  \mathcal B))+\mu(f(\mathcal C))\leq \int_{A\cap V} \!|Df|^{\deltacrit(x)} \,\mathrm{d}\mu+\epsilon.
\end{equation}
On the other hand, using \eqref{eq:trivialU}, \eqref{eq:eqbla}, and that $A\cap V=\mathcal B\sqcup \mathcal C$, we have
\begin{equation}\label{eq:confineq2}
\mu(f(A\cap V))
\geq \mu(f(  \mathcal B))=\int_{  \mathcal B}\!|Df|^{\deltacrit(x)}\,\mathrm{d}\mu
> \int_{A\cap V}\!|Df|^{\deltacrit(x)}\,\mathrm{d}\mu-\epsilon. 
\end{equation}
As $\epsilon$ was arbitrary, $(A,f)$ is a special pair of $\Lambda_+(x)$, and $\mu$ has support contained in $\Lambda_+(x)$, \eqref{eqconffixed} follows. 

In Case 2, by construction of $\mu_s$ and the chain rule, we have \begin{align*}
        \int_{  \mathcal B} \!m(\cdot)\,\mathrm{d}\mu_s+G_s(   f(\mathcal B))\leq \mu_s(f(   \mathcal B))
        \leq \int_{  \mathcal B} \!M(\cdot)\,\mathrm{d}\mu_s+G_s(   f(\mathcal B)).
    \end{align*}
Since $G_s(   \mathcal B)\to 0$ as $s\searrow \deltacrit(x)$, and  $m$, and $M$ converge uniformly on $V\cap A$ to $|Df|^{\deltacrit(x)}$ as $s\searrow \deltacrit(x)$, it follows that when sending $s\searrow \deltacrit(x)$, each weak$^*$-limit $\mu$ of the measures $\mu_s$ as $s\searrow\delta$ satisfies

\begin{equation*}
	\mu(f(   \mathcal B))= \int_{   \mathcal B} \!|Df|^{\deltacrit(x)} \,\mathrm{d}\mu.
\end{equation*}
In the same way as in Case 1, using \eqref{eq:trivialU} and \eqref{eq:name}, we obtain \eqref{eq:confineq1} and \eqref{eq:confineq2}, which imply \eqref{eqconffixed}. See also \cite[Theorem~3]{sullivan} for an alternative reference.
\end{proof}

\begin{definition}
    Any weak$^*$-limit of the measures $\mu_s$ as $s\searrow \deltacrit(x)$ constructed in Proposition~\ref{prop:fund} will be called a \emph{limit measure}.
\end{definition}

Take now a parabolic periodic point $\omega$ of $F$ and suppose that $F$ is invariantly inverse-like on $\Lambda_+(x)$ and that $\Lambda_+(x)\cap\sing=\emptyset$.
Let $q(\omega)\geq 1$ be the minimal natural number such that $\omega\in F^{q(\omega)}(\omega)$ and denote by $T_\omega$ the (unique) locally defined branch of $F^{q(\omega)}$ fixing $\omega$. Note that $\omega$ is a parabolic periodic point of $F$ if and only if there exists a natural number $n\geq 1$ such that $g_{F,\Lambda_+(x)}^n(\omega)=\omega$ and $Dg_{F,\Lambda_+(x)}^n(\omega)$ is a root of unity. Under the assumption that $F$ is locally $\Omega_+(x)$-attracting on $\Lambda_+(x)$, there exist integers $n\geq 1$ and $p(\omega)\geq 1$, a complex number $a\neq 0$, and a neighborhood $V_{\omega}$ of $\omega$ where $T_\omega^m$ is defined for $m=1,\,\dots,\,n$ such that for each $z\in V_\omega$,
\begin{align*}
	T_{\omega}^n(z)&=\omega + ({z}-\omega)+a( z-\omega)^{p(\omega)+1}+\cdots \quad \text{or}\\
	T_\omega^n(z) &=\omega + (\overline{z}-\overline\omega)+a(\overline z-\overline\omega)^{p(\omega)+1}+\cdots.
\end{align*}

For each $z\in V_\omega$, provided $V_\omega $ is chosen sufficiently small, there exists a constant $C(z)\geq 1$ such that
\begin{equation*}
	n^{(p(\omega)+1)/p(\omega)} C(z) 
	\geq 1 / |DT_{\omega}^n(z)| 
	\geq  n^{(p(\omega)+1)/p(\omega)} / C(z),
\end{equation*} 
provided that $T^n_\omega(z)\in V_\omega$ for all $n\in \bN$, and that this implies $T_\omega^n(z)\to \omega$ as $n\to +\infty$, see \cite[Theorem~8.4]{urbanskidenker6}.

We will need the K\"obe distortion theorem and formulate it in the following way:

\begin{theorem}[K\"obe distortion theorem]\label{thm:kobe}
There exists an increasing function $k\:[0,1)\to [1,+\infty)$ such that each real number $r>0$ and each univalent holomorphic or antiholomorphic function $f\:B(z,r)\to \setRS$, we have 
\begin{equation*}
 |Df(y)| / |Df(y')| \leq k(t)
\end{equation*}
if $t\in [0,1)$, $|y-z|\leq tr$, and $|y'-z|\leq tr.$
\end{theorem}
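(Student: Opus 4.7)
The strategy is to reduce this spherical formulation to the classical Köbe distortion theorem for univalent functions on the Euclidean unit disk. First I would rescale via the affine change of variable $w \mapsto (w-z)/r$ so that the problem becomes that of a univalent map $f \: \mathbb{D} \to \setRS$ and bounding $|Df(y)|/|Df(y')|$ for $|y|, |y'| \le t$. The antiholomorphic case reduces to the holomorphic one by pre-composing with complex conjugation, which preserves $|Df|$.

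Next I would exploit the spherical isometry group. Since $\mathbb{D}$ is not conformally equivalent to $\setRS$, the image $f(\mathbb{D})$ is a proper simply connected subdomain of $\setRS$ and hence omits at least one point $p \in \setRS$. Post-composing $f$ with a rotation $M \in SO(3)$ sending $p$ to $\infty$ produces a univalent map $g \coloneqq M \circ f \: \mathbb{D} \to \mathbb{C}$; since $M$ is an isometry of the spherical metric, $|Dg(y)| = |Df(y)|$ for every $y \in \mathbb{D}$ and the ratio to be bounded is unchanged. We may therefore assume $f(\mathbb{D}) \subseteq \mathbb{C}$.

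Finally I would apply the classical planar Köbe distortion and growth theorems to $g$: the Euclidean ratio $|g'(y)|/|g'(y')|$ is bounded explicitly in terms of $t$ (for instance, by $((1+t)/(1-t))^4$), and $|g(y)|$ is controlled by $|g'(0)|$ and $t$ via the growth theorem. Writing $|Dg(y)| = |g'(y)|(1+|y|^2)/(1+|g(y)|^2)$, the domain factor $(1+|y|^2)/(1+|y'|^2)$ lies in a bounded interval depending only on $t$. The main obstacle is controlling the image factor $(1+|g(y)|^2)/(1+|g(y')|^2)$, since it depends on the Euclidean size of the image, which is governed by $|g'(0)|$; a further post-composition by a genuine Möbius transformation would distort the spherical metric and so cannot be used directly. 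The cleanest route is to exploit the residual freedom in $\mathrm{Stab}(\infty) \subset SO(3)$ to normalize the spherical derivative $|Dg(0)|$ and then invoke a normal-family argument for the class of univalent functions $\mathbb{D} \to \setRS$ with this spherical normalization, yielding uniform upper and lower bounds on $|Dg(y)|$ over $\{|y| \le t\}$ and thus the desired function $k(t)$.
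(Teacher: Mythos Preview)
The paper does not prove this statement at all: Theorem~\ref{thm:kobe} is stated as the classical K\"obe distortion theorem and used as a black box throughout (for instance in Proposition~\ref{le:nonatomic} and Lemma~\ref{le:3}), with no proof or sketch supplied. So there is nothing in the paper to compare your proposal against.

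That said, your outline is a reasonable route to a proof. The reduction to the unit disk and the handling of the antiholomorphic case are fine, and you correctly identify that post-composing by a spherical rotation lets you assume the image avoids $\infty$. You are also right that the remaining obstacle is the target conformal factor $(1+|g(y)|^2)/(1+|g(y')|^2)$, which the planar K\"obe theorem alone does not control. Your final normal-family argument is in fact the most efficient way to finish, and once you invoke it you can bypass the planar K\"obe theorem entirely: the class of univalent (anti)holomorphic maps $g\colon\mathbb{D}\to\setRS$ normalized by $|Dg(0)|=1$ in the spherical metric is a normal family whose closure contains no constant map (constants have $|Dg(0)|=0$), so $|Dg|$ is uniformly bounded above and away from zero on each $\overline{B(0,t)}$, giving $k(t)$ directly. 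One small caveat: your first step uses an affine rescaling $w\mapsto(w-z)/r$, which treats $B(z,r)$ as a Euclidean ball; since the paper works in the spherical metric throughout, the cleaner normalization is to precompose with a M\"obius transformation in $\mathrm{PSU}(2)$ taking $z$ to $0$, which is a spherical isometry and again leaves $|Df|$ unchanged.
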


We can now show the following proposition.

\begin{prop}\label{le:nonatomic} Suppose that $F$ is invariantly inverse-like on a closed set $S\subseteq \setRS$, $x\in S$, $\omega\in \Omega_+(x)$, and $\Lambda_+(x)\cap \sing=\emptyset$. Suppose further that 
$F$ is locally $\Omega_+(x)$-attracting on $\Lambda_+(x)$ and that the critical exponent $\delta_{\operatorname{crit}}(x)$ satisfies
\begin{equation*}
	p(\omega)/(p(\omega)+1)<\delta_{\operatorname{crit}}(x)<+\infty.
\end{equation*}
Then each limit measure has no mass on $\omega$, on attracting or repelling periodic points, or on their orbits. In particular, if
\begin{equation*}
	\sup_{\omega\in \Omega_+(x)} \{p(\omega)/(p(\omega)+1)\}<\delta_{\operatorname{crit}}(x)<+\infty,
\end{equation*}
then each limit measure does not have mass on $\Omega_+(x)$. 
\end{prop}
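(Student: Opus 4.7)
For a non-parabolic periodic point $z_0$ of $F$—say a branch $T$ of $F^{q}$ fixes $z_0$ with $|DT(z_0)| \neq 1$—the argument is short and rests on the conformality established in Proposition~\ref{prop:fund}. If $z_0 \notin \Lambda_+(x)$, then $\mu(\{z_0\}) = 0$ trivially since $\mu$ is supported on $\Lambda_+(x)$. If $z_0 \in \Lambda_+(x)$, writing $T = f_q \circ \cdots \circ f_1$ as a composition of branches of $F$ and using $F(\Lambda_+(x)) \subseteq \Lambda_+(x)$ (from invariant inverse-likeness), the full orbit $z_0,\, z_1 \coloneqq f_1(z_0),\, \ldots,\, z_q = z_0$ lies in $\Lambda_+(x)$, so each $(\{z_{i-1}\}, f_i)$ is a special pair of $\Lambda_+(x)$. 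Iterating the conformality identity along this orbit gives
\[\mu(\{z_0\}) \;=\; |DT(z_0)|^{\deltacrit(x)} \mu(\{z_0\}),\]
which forces $\mu(\{z_0\}) = 0$; the same identity applied at each $z_i$ kills the whole periodic orbit.

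The parabolic case $\omega \in \Omega_+(x)$ is more delicate, since $|DT_\omega(\omega)|$ is a root of unity and conformality now reduces to a tautology. We argue instead through the approximating measures $\mu_s$ from the proof of Proposition~\ref{prop:fund}: by weak-$*$ lower semi-continuity on open sets,
\[\mu(\{\omega\}) \;\leq\; \mu(B(\omega, r)) \;\leq\; \liminf_{s \searrow \deltacrit(x)} \mu_s(B(\omega, r))\]
for every $r > 0$, so it suffices to produce an upper bound on $\mu_s(B(\omega, r))$ that vanishes as $r \to 0^+$, uniformly for $s$ close to $\deltacrit(x)$. The inputs are the pinched neighborhood $V_\omega$ afforded by the locally $\Omega_+(x)$-attracting hypothesis and the standard parabolic flower asymptotics, valid on $V_\omega$ via Theorem~\ref{thm:kobe}: $|DT_\omega^n(z)| \leq C\, n^{-(p(\omega)+1)/p(\omega)}$ and $|T_\omega^n(z) - \omega| \leq C\, n^{-1/p(\omega)}$.

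The plan is then to decompose the indices $(n, j)$ with $f_{n,j}(x) \in B(\omega, r) \subseteq V_\omega$ by the first time $n_0$ at which the underlying orbit of $x$ enters $V_\omega$. Invariant inverse-likeness combined with the locally $\Omega_+(x)$-attracting structure ensures that any continuation of the orbit that stays inside $V_\omega$ must proceed by iterates of $T_\omega$, so $n = n_0 + k q(\omega)$ and the chain rule gives $|Df_{n,j}(x)|^s = |DT_\omega^k(z_0)|^s\, |Df_{n_0, j_0}(x)|^s$ with $z_0 \coloneqq f_{n_0, j_0}(x)$. The constraint $T_\omega^k(z_0) \in B(\omega, r)$ forces $k \geq c\, r^{-p(\omega)}$, so the tail sum is controlled by
\[\sum_{k \geq c\, r^{-p(\omega)}} k^{-s(p(\omega)+1)/p(\omega)} \;\leq\; C\, r^{s(p(\omega)+1) - p(\omega)},\]
convergence being ensured precisely by the hypothesis $s > p(\omega)/(p(\omega)+1)$. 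Summing the entry weights $|Df_{n_0, j_0}(x)|^s$ (a subseries of $P_s(x)$) and dividing by $P_s(x)$ yields $\mu_s(B(\omega, r)) \leq C\, r^{s(p(\omega)+1) - p(\omega)}$, which goes to $0$ as $r \to 0^+$ uniformly in $s$ near $\deltacrit(x)$, so $\mu(\{\omega\}) = 0$. Case~2 of Proposition~\ref{prop:fund} is handled identically since the weight modification $t(n)$ only alters finitely many head terms of the series.

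The main technical obstacle is precisely the entry-plus-tail decomposition above: justifying that each branch of $F^n$ landing in $V_\omega$ is uniquely representable as $T_\omega^k \circ (\text{entry branch})$. This combines invariant inverse-likeness of $F$ on $\Lambda_+(x)$ (unique predecessors in $\Lambda_+(x)$) with the pinched neighborhood structure of Definition~\ref{def:Omega-attracting} (so that $T_\omega$ is the unique local branch keeping the orbit near $\omega$); Theorem~\ref{thm:kobe} supplies the uniform distortion needed to propagate the flower asymptotics across all of $V_\omega$, rather than only on an infinitesimal scale at $\omega$. The ``in particular'' clause follows from applying the single-$\omega$ conclusion to each $\omega \in \Omega_+(x)$ in turn and using countable subadditivity of $\mu$.
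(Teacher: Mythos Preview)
Your approach matches the paper's: both decompose orbit points near $\omega$ into an entry point plus a tail of $T_\omega$-iterates, then bound the tail via the parabolic asymptotic $|DT_\omega^k| \asymp k^{-(p(\omega)+1)/p(\omega)}$ and the convergence of $\sum k^{-s(p(\omega)+1)/p(\omega)}$ for $s > p(\omega)/(p(\omega)+1)$. The paper packages the entry step by fixing finitely many ``gateway'' balls $B(x_i,r_i)$ through which every approaching chain must pass and using the crude bound $\mu_s(B(x_i,r_i)) \le 1$, whereas you track entry weights as a subseries of $P_s(x)$; the underlying idea is the same, and your treatment of attracting/repelling periodic points via conformality is exactly the paper's.

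There is one point that does not quite go through as written. Your claim that $T_\omega^k(z_0)\in B(\omega,r)$ forces $k\ge c\,r^{-p(\omega)}$ needs $c$ \emph{uniform} over all entry points $z_0$, and this requires the $z_0$ to lie in a compact set bounded away from $\omega$. Your entry criterion (``first time the orbit enters $V_\omega$'') does not deliver this: the pinched neighborhood $V_\omega$ has $\omega$ in its closure, and nothing prevents a branch of $F$ from depositing an entry point already very close to $\omega$, in which case $k=0$ is admissible and the tail estimate collapses. K\"obe distortion does not help here---it controls derivative ratios across a fixed ball, not the dependence of the parabolic constants $C(z_0)$ on the initial point. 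This is exactly what the paper's gateway balls $B(x_i,r_i)$ are for: the $x_i$ sit at fixed positions bounded away from $\omega$, so only finitely many constants $C(x_i)$ enter. Equivalently, you could argue that the (last-)entry locus is contained in a fundamental domain for $T_\omega$ on the petal, which is compact and avoids $\omega$; with that adjustment your argument is complete. (A smaller quibble: in Case~2 the modified exponent $t(n)$ depends on $n = n_0 + kq(\omega)$, not on $n_0$ alone, so the entry sum is not literally a subseries of $P_s(x)$; the paper's route via $\mu_s(B(x_i,r_i))\le 1$ sidesteps this bookkeeping.)
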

\begin{proof}
    We use the construction from Proposition~\ref{prop:fund} and show that each limit measure has no mass on any parabolic periodic point of $F$.

    Fix now $\omega\in \Omega_+(x)$. 
    As $\delta_{\operatorname{crit}}(x)<+\infty$, it follows that $\omega\notin \bigcup_{n=0}^{+\infty} F^n(x)$. Indeed, $|DT_\omega^n(\omega)|=1$ for each $n\geq 1$, so the Poincar\'e series of any point whose forward orbit lands on $\omega$ diverges for each $s>0$.

    Now, in a similar way as in \cite[Theorem~8.7]{urbanskidenker6} or \cite[Theorem~4.1]{mcmullen}, by the conditions of $\Lambda_+(x)$ and compactness, together with the local dynamics of an analytic germ around a rationally indifferent fixed point, we may find finitely many points $x_i$, $i \in \{1, \, \dots, \, N \}$, and corresponding balls $B(x_i,r_i)$ such that the branch $T_\omega^n$ of $F^{nq(\omega)}$ fixing $\omega$ is defined and injective on $B(x_i,2r_i)\subseteq V_\omega$ for each $n\geq 0$. Furthermore, defining $\mathcal{S}\coloneqq \bigcup_{i=1}^N \bigcup_{n=0}^{+\infty} T_\omega^n (B(x_i,r_i)),$ we can choose these balls such that
    if the sequence $\{z_m\}_{m\in \bN}$ with $z_m\in F^{m}(x)$ has $\omega$ as a limit point, then there exists a natural number $m_0\geq 0$ such that $z_{m_0}\in B(x_i,r_i)$ for some $i\in \{1, \, \dots, \, N\}$ and $z_m\notin \mathcal S$ for each non-negative integer $m<m_0$. Note here that we use that $F$ is invariantly inverse-like on $\Lambda_+(x)$ and continuity of $F$ (in the sense that $F(z)$ depends continuously on $z$ in
the Hausdorff metric). As an illustrating example, for the correspondence in Example~\ref{ex:example}, the points $x_i$ can be chosen to lie on a red circular arc as in Figure~\ref{fig:Uneighborhood}, provided this circular arc is chosen sufficiently close to $\omega=1/2$. 
    
    Take now a sufficiently small open neighborhood $V$ of $\omega$, a real number $s>\deltacrit(x)$, and let $\epsilon>0$ be given. Notice by definition of the balls $B(x_i,r_i)$, $\mu_s(V)=\mu_s(V\cap \mathcal{S})$ for all $s$, provided that $V$ is sufficiently small. Let us for each integer $j\geq 1$, define
    \begin{equation*}
    	V_j\coloneqq V\cap \biggl(\bigcup_{i=1}^N \bigcup_{n=j}^{+\infty} T_\omega^n (B(x_i,r_i))\biggr).
    \end{equation*} 
    Then,
    \begin{align*}
        \mu_s(V_j)
    \leq \frac{1}{P_s(x)}\sum_{m=0}^{+\infty} \sum_{\substack{1\leq j\leq M_m\\f_{m,j}(x)\in V_j}}|Df_{m,j}(x)|^s  
    &\leq K^s\sum_{i=1}^N\sum_{n=j}^{+\infty} |DT_\omega^n(x_i)|^s\mu_s(B(x_i,r_i))  \\
    &\leq  N K^s C\sum_{n=j}^{+\infty} \frac{1}{n^{s(p(\omega)+1)/p(\omega)}}
    <\epsilon,
    \end{align*}
    provided that $j$ is sufficiently large, independently of $s$ when it is sufficiently close to $\delta_{\operatorname{crit}}(x)$, $K\coloneqq k(1/2)$, where $k\:[0,1)\to [1,+\infty)$ is the function from the K\"obe distortion theorem (Theorem~\ref{thm:kobe}), and
    \begin{equation*}
    	C\coloneqq \max\{C(x_i) : i\in \{1, \, \dots,  \, N \}\}.
    \end{equation*}
    From this, and that $\mu_s(\omega)=0$ 
 since $\delta_{\operatorname{crit}}(x)<+\infty$, it follows that if $\mu$ is a limit measure as $s\searrow \delta_{\operatorname{crit}}(x)$, then 
    $\mu(\omega)=0$.
    For an attracting or repelling periodic point $y$ of $F$, let $f_I$ be a branch of $F^q$ for some $q\geq 1$ fixing $y$ with $|Df_I(y)|\neq 1$. Then $\mu(y)=\mu(f^{n}(y))=|Df^n(y)|^{\delta} \mu(y)$  shows that $\mu(y)=0$. Finally, as $F$ is non-branched on $\Lambda_+(x)$, we have $Df(y)<K_0<+\infty$, and it follows that the bi-infinite orbit of each repelling or attracting periodic point of $F$ has measure 0, since $\mu$ is conformal.
\end{proof}

\section{Conformal measures for relatively hyperbolic correspondences on minimal forward limit sets}\label{sec:3}
In this section, we study conformal measures on forward limit sets $\Lambda_+(x)$ on which $F$ is relatively hyperbolic, and that are minimal. 

The objective of this section is to show the following proposition and Theorem~\ref{thm:main}.

\begin{prop}\label{prop:HDmain}  Suppose that $F$ is relatively hyperbolic on $\Lambda_+(x)$ and that $\Lambda_+(x)$ is minimal.
    If $\mu$ is a $\delta$-conformal measure that is not purely atomic, then $\delta<2$ and $\operatorname{HD}(\Lambda_+(x))\leq \delta$. 
\end{prop}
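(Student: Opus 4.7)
The plan is to adapt the strategy used for parabolic rational maps in \cite{urbanskidenker1}, \cite{urbanskidenker6}, with the main novelty being the careful handling of correspondences near critical values and the use of minimality to propagate estimates. First I would reduce to the non-atomic case: if $\mu$ is not purely atomic, write $\mu = \mu_{0} + \mu_{\mathrm{atom}}$ with $\mu_0$ the non-atomic part, and normalize $\mu_0/|\mu_0|$ to a non-atomic $\delta$-conformal measure. Because $\Lambda_+(x)$ is minimal, the forward orbit of any point is dense in $\Lambda_+(x)$, so this normalized measure is necessarily \emph{open} (strictly positive on non-empty relatively open subsets of $\Lambda_+(x)$). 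Separately, a standard conformality argument (pushing mass onto larger and larger derivatives along $g_{F,\Lambda_+(x)}$) shows that atoms of a $\delta$-conformal measure can only accumulate on $\bigcup_{n\ge 0} F^n(\Omega_+(x))$, which by condition~(\ref{it:countable}) of Definition~\ref{def:minimal} is countable; hence removing this countable set does not decrease the total $\mu$-mass.

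Next, set $X \coloneqq \Lambda_+(x) \setminus \bigcup_{n\ge 0} F^n(\Omega_+(x))$. The key estimate is that for every $z \in X$ there exist a constant $B(\mu)\ge 1$ and a sequence $r_j = r_j(z) \to 0$ with $B(\mu)^{-1} r_j^\delta \le \mu(B(z,r_j)) \le B(\mu)\, r_j^\delta$. To obtain this, local $\Omega_+(x)$-attraction (Definition~\ref{def:Omega-attracting}) gives a $\Theta>0$ and infinitely many $n_j$ with $g_{F,\Lambda_+(x)}^{n_j}(z) \in \Lambda_+(x) \setminus B(\Omega_+(x),\Theta)$. Since $\Lambda_+(x) \cap \mathcal{PC}_{F^{-1}} \subseteq \Omega_+(x)$ by relative hyperbolicity, all inverse branches $f_{n_j}$ of $F^{n_j}$ are defined and univalent on a ball $B(g_{F,\Lambda_+(x)}^{n_j}(z), 2\delta_0)$ of uniform radius; Koebe distortion (Theorem~\ref{thm:kobe}) together with conformality then gives the two-sided bound. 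Taking the same construction and now observing that $\Lambda_+(x)$ has empty interior (condition~(\ref{it:min1}) of Definition~\ref{def:minimal}), we can fit a ball $B(y',\epsilon) \subseteq B(g_{F,\Lambda_+(x)}^{n_j}(z),\delta_0) \setminus \Lambda_+(x)$; the images $f_{n_j}(B(y',\epsilon))$ are definite-shape gaps in $\Lambda_+(x)$ inside the shrinking Koebe ovals around $z$, so $\Lambda_+(x)$ is porous at every $z \in X$ and therefore has zero two-dimensional Lebesgue measure.

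With these two inputs in hand, I would finish as follows. Suppose for contradiction that $\delta \geq 2$. Given $\epsilon>0$, for each $z \in X$ pick $r(z)$ of the form $r_j(z)$ with $\lambda_2\bigl(\bigcup_{z\in X} B(z,r(z))\bigr) < \epsilon$, extract a Besicovitch subcover $\{B(z_i,r(z_i))\}_{i\in \bN}$ of bounded multiplicity $C$, and estimate
\begin{equation*}
\mu(X) \le \sum_i \mu(B(z_i,r(z_i))) \le B(\mu) \sum_i r(z_i)^\delta \le B(\mu) \sum_i r(z_i)^2 \le \frac{B(\mu) C}{\pi}\, \lambda_2\Bigl(\bigcup_i B(z_i,r(z_i))\Bigr) < \frac{B(\mu) C \epsilon}{\pi}.
\end{equation*}
Sending $\epsilon \to 0$ gives $\mu(X)=0$, contradicting that $\mu$ is non-atomic while $\Lambda_+(x) \setminus X$ is countable. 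For the Hausdorff dimension bound, the same ball estimate yields the comparison $H_\delta(E) \le B(\mu) C\, \mu(E)$ for Borel $E \subseteq X$ by a Besicovitch covering argument (as in \cite[Theorem~14]{urbanskidenker1}), so $H_\delta(\Lambda_+(x))$ is finite and $\operatorname{HD}(\Lambda_+(x)) \le \delta$.

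The main obstacle is the porosity/zero-area step: we need the inverse branches $f_{n_j}$ to stay inside a controlled neighborhood so that both the Koebe ball containing $z$ and the gap $f_{n_j}(B(y',\epsilon))$ remain usable. This requires a normal families argument (using that $\Lambda_+(x)$ omits at least three points, provided by condition~(\ref{it:U}) of Definition~\ref{def:minimal}) to show that $|Df_{n_j}(g_{F,\Lambda_+(x)}^{n_j}(z))| \to 0$ along the subsequence, ensuring that $r_j \to 0$ and that the Koebe ovals actually shrink to $z$. The second subtlety is that condition~(\ref{it:U}) must be invoked to extend $g_{F,\Lambda_+(x)}^{n}$ as a true inverse of the branch $f_{n_j}$ on the full ball, which is what makes the Koebe estimate and the gap argument interact correctly.
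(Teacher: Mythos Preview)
Your proposal is correct and follows essentially the same approach as the paper: reduce to a non-atomic (hence open) conformal measure, establish the two-sided ball estimate via Koebe distortion along return times to $\Lambda_+(x)\smallsetminus B(\Omega_+(x),\Theta)$, prove zero area by porosity, and then run the Besicovitch covering argument both to force $\delta<2$ and to bound $H_\delta$ by $\mu$. You have also correctly identified the two delicate points the paper handles separately (the normal families argument for $r_j\to 0$ using condition~(\ref{it:U}), and the extension of $g_{F,\Lambda_+(x)}^n$ to the full Koebe ball).
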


The arguments in this section are heavily influenced by and many times very similar to those used in \cite{urbanskidenker1}.
However, a significant difference between the rational setting and the one we study here is the possible existence of conformal measures whose support is contained in \emph{finitely} many points. For this reason, we introduce the notion of \emph{open} conformal measures, which by definition are conformal measures that are positive on each open subset of $\Lambda_+(x)$. With this definition in place, we use the ideas of the aforementioned paper to show results about open conformal measures, and in particular non-atomic conformal measures, including Proposition~\ref{prop:HDmain}. Applying this result, Lemma~\ref{le:atoms} below, and the results of Section~\ref{sec:2}, we subsequently conclude Theorem~\ref{thm:main}.

We now fix $x\in \setRS$, assume that $F$ is relatively hyperbolic on $\Lambda_+(x)$, and that $\Lambda_+(x)$ is minimal. The reader may note that it is not strictly necessary that the set $\Lambda_+(x)$ in the statements below is in fact a forward limit set for the arguments to hold. However, in order to apply the results of this section to Theorems~\ref{thm:misha}, \ref{thm:BP}, and~\ref{thm:main}, it is necessary. Therefore, we state the results in this way.

 The arguments below are tailored for the situation $\Omega_+(x)\neq \emptyset$. However, the same results hold in the case $\Omega_+(x) =\emptyset$ using the same arguments, but could be significantly simplified in many cases. 

Let us begin with some preliminary remarks about conformal measures in the present setting. 

First, note that $F(\Lambda_+(x))=\Lambda_+(x)$. Indeed, this follows from the fact that the image under $F$ of a compact set is again compact, which may be proven the same way one proves that the continuous image of a continuous map of a compact set is compact.

Recall from Section~\ref{sec:importantdefs} that we denote by $g_{F,\Lambda_+(x)}\:\Lambda_+(x)\to \Lambda_+(x)$ the map $g_{F,\Lambda_+(x)}(w)=z$, where $z$ is the unique point in $\Lambda_+(x)$ such that $w\in F(z)$. Note that
\begin{equation*}
Dg_{F,\Lambda_+(x)}(w)\coloneqq  1 / Df(z) ,
\end{equation*}
where $f$ is the unique branch of $F$ that maps $z$ to $w$, is well-defined. Here we use that $\Lambda_+(x)\cap \sing=\emptyset$. Observe that $|Dg_{F,\Lambda_+(x)}(w)|=+\infty$ if $Df(z)=0$. However, $g_{F,\Lambda_+(x)}$ has no critical points on $\Lambda_+(x)$, i.e., $Dg_{F,\Lambda_+(x)}$ does not vanish on $\Lambda_+(x)$. Hence, a measure $\mu$ supported on $\Lambda_+(x)$ is $\delta$-conformal for $F$ if

\begin{equation}\label{eq:gconf}
    \mu(g_{F,\Lambda_+(x)}(A))=\int_A\!|Dg_{F,\Lambda_+(x)}|^\delta \,\mathrm{d}\mu
\end{equation}
for each Borel set $A\subseteq \Lambda_{+}(x)$ on which $g_{F,\Lambda_+(x)}$ is injective. This can be shown for instance by using the chain rule of the Radon--Nikodym derivative. Furthermore, if $\mu$ is $\delta$-conformal for $F$ and $\mu$ has zero mass on $\mathcal{C}_F \cap \Lambda_+ (x)$, then \eqref{eq:gconf} holds for each Borel set $A\subseteq \Lambda_{+}(x)$ on which $g_{F,\Lambda_+(x)}$ is injective. Lastly, if $\mu$ is $\delta$-conformal for $F$, then \eqref{eq:gconf} holds for each Borel set $A\subseteq \Lambda_{+}(x)$ on which $g_{F,\Lambda_+(x)}$ is injective, and that does not contain the critical values $\mathcal{CV}_F$ of $F$.

Note that by condition~(\ref{it:U}) of minimality of the forward limit set, it follows that $\mathcal{C}_F\cap \Lambda_+(x)\subseteq \Omega_+(x)$.

Next, relative hyperbolicity and minimality lead to the following two useful lemmas.

\begin{lemma}\label{le:1} Suppose that $F$ is relatively hyperbolic on $\Lambda_+(x)$ and $\Lambda_+(x)$ is minimal.
Consider a real number $\Theta>0$. There exists a real number $\epsilon=\epsilon(\Theta)>0$ such that for all $z\in \Lambda_+(x)\smallsetminus B(\Omega_+(x),\Theta)$, the ball $B(z,2\epsilon)$ does not intersect $\mathcal{PC}_{F^{-1}}$. Consequently, for each $n\geq 0$, all branches of $F^n$ are defined on $B(z,2\epsilon)$. Furthermore, these branches defined on $B(z,2\epsilon)$ are injective.
\end{lemma}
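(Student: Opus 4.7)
The plan is to extract the stated $\epsilon$ from two separate compactness arguments and then take the minimum. I will treat the three conclusions in the order they appear, since the later ones rely on the earlier choice of $\epsilon$.

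First I would establish that $K_\Theta \coloneqq \Lambda_+(x)\smallsetminus B(\Omega_+(x),\Theta)$ is disjoint from $\mathcal{PC}_{F^{-1}}$. This is immediate from condition~(\ref{it:thirdrel}) of relative hyperbolicity (Definition~\ref{def:wellbehaved}): $\Lambda_+(x)\cap \mathcal{PC}_{F^{-1}}\subseteq \Omega_+(x)$, and $K_\Theta$ by construction avoids $\Omega_+(x)$. Since $K_\Theta$ is a closed (hence compact) subset of $\setRS$, and $\mathcal{PC}_{F^{-1}}$ is closed by its definition as the closure of an orbit-union, the two sets have strictly positive distance $d(\Theta)>0$. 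Choosing $\epsilon_1 \coloneqq d(\Theta)/3$ gives $B(z,2\epsilon_1)\cap \mathcal{PC}_{F^{-1}}=\emptyset$ for every $z\in K_\Theta$.

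Next, the ``consequently'' part is essentially a restatement of the observation made just before Definition~\ref{def:postcritical}. Since $\mathcal{PC}_{F^{-1}}\supseteq \bigcup_{i=0}^{n-1}F^{-i}(\mathcal{CV}_{F^{-1}})$ for every $n\geq 0$, and since $B(z,2\epsilon_1)$ is a simply connected domain avoiding $\mathcal{PC}_{F^{-1}}$, all branches of $F^n$ are defined on $B(z,2\epsilon_1)$.

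For injectivity I would use condition~(\ref{it:U}) of minimality (Definition~\ref{def:minimal}). For each $y\in K_\Theta\subseteq \Lambda_+(x)\smallsetminus \Omega_+(x)$, this condition provides an open neighborhood $U_y$ of $y$ on which, for every $n\geq 0$, the iterate $g_{F,\Lambda_+(x)}^n$ extends so as to satisfy $g_{F,\Lambda_+(x)}^n(F^n(w))=\{w\}$ for each $w\in U_y$. In particular, if $f_n$ is any branch of $F^n$ on $U_y$, then $g_{F,\Lambda_+(x)}^n\circ f_n$ is the identity on $U_y$, so $f_n$ is injective on $U_y$. The family $\{U_y:y\in K_\Theta\}$ is an open cover of the compact set $K_\Theta$; extract a finite subcover $\{U_{y_1},\dots,U_{y_N}\}$ and let $\epsilon_2>0$ be less than one-half of a Lebesgue number for this cover in $\setRS$. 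Then for each $z\in K_\Theta$, the ball $B(z,2\epsilon_2)$ is contained in some $U_{y_j}$, and so every branch of $F^n$ on $B(z,2\epsilon_2)$ is injective.

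Finally I would set $\epsilon(\Theta)\coloneqq \min\{\epsilon_1,\epsilon_2\}$ to obtain all three conclusions simultaneously. The main obstacle in this argument is the injectivity clause: the branches of $F^n$ are \emph{a priori} only locally well-defined holomorphic maps and need not be injective on arbitrary simply connected domains avoiding $\mathcal{PC}_{F^{-1}}$. Pulling injectivity out requires the existence of a global inverse, which is exactly what the minimality condition~(\ref{it:U}) supplies, and this is why the Lebesgue-number step tying the size of $\epsilon$ to the finite subcover is essential rather than cosmetic.
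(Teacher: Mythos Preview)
Your proof is correct and follows essentially the same route as the paper: condition~(\ref{it:thirdrel}) of relative hyperbolicity plus compactness gives the separation from $\mathcal{PC}_{F^{-1}}$, and condition~(\ref{it:U}) of minimality plus a Lebesgue-number argument over the compact set $K_\Theta$ gives injectivity via the left-inverse $g_{F,\Lambda_+(x)}^n$. The paper's version is slightly terser (it directly invokes a Lebesgue number for the cover $\{U_z\}$ without first extracting a finite subcover and combines the two $\epsilon$-choices into one), but the structure and ingredients are identical.
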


\begin{proof}
    The statement that there exists a real number $\epsilon=\epsilon(\Theta)>0$ such that for all $z\in \Lambda_+(x)\smallsetminus B(\Omega_+(x),\Theta)$, the ball $B(z,2\epsilon)$ does not intersect $\mathcal{PC}_{F^{-1}}$ follows from condition~(\ref{it:thirdrel}) of relative hyperbolicity of $F$. What remains to prove is the injectivity of these branches. By condition~(\ref{it:U}) of minimality, for each $z\in \Lambda_+(x)\smallsetminus \Omega_+(x)$, there exists a neighborhood $U_z$ of $z$ such that for each $n\geq 0$, $g_{F,\Lambda_+(x)}^n$ extends to a map on $F^n(U_z)\cup \Lambda_+(x)$, still denoted by $g_{F,\Lambda_+(x)}^n$ (so that $g_{F,\Lambda_+(x)}^n(F^n(y))=\{y\}$ for each $y\in U_z$). By compactness of $\Lambda_+(x)\smallsetminus B(\Omega_+(x),\Theta)$, we can assume that $2\epsilon>0$ is smaller than some Lebesgue's number of the cover 
    \begin{equation*}
    \{\{U_z\}:z\in \Lambda_+(x)\smallsetminus B(\Omega_+(x),\Theta)\}.
    \end{equation*}
    Then $g_{F,\Lambda_+(x)}^n(f_n(y))=y$ for each $n\geq 1$, each $y\in B(z,2\epsilon)$, and each branch $f_n$ of $F^n$, where $g_{F,\Lambda_+(x)}^n$ is extended to $F^n(B(z,2\epsilon))\cup \Lambda_+(x)$. From this, injectivity follows and this concludes the proof of the lemma.
\end{proof}

    Let us define
    \begin{equation*}
    	X\coloneqq \Lambda_+(x)\smallsetminus \bigcup_{n=0}^{+\infty} F^n(\Omega_+(x))=  \Lambda_+(x)\smallsetminus \bigcup_{n=0}^{+\infty} g_{F,\Lambda_+(x)}^{-n}(\Omega_+(x)).
    \end{equation*}
    The set $X$ will be used throughout the rest of this section.

 \begin{lemma}\label{le:2} Suppose that $F$ is relatively hyperbolic on $\Lambda_+(x)$ and that $\Lambda_+(x)$ is minimal. Then there exists $\Theta>0$ such that  for each $z \in X$, we have $g_{F,\Lambda_+(x)}^n(z)\in \Lambda_+(x)\smallsetminus B(\Omega_+(x),\Theta)$ for infinitely many $n\in \bN$.
\end{lemma}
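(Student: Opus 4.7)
The strategy is a contradiction argument leveraging the parabolic dynamics at each $\omega \in \Omega_+(x)$. Locally at $\omega$, the branch $T_\omega$ of $F^{q(\omega)}$ fixing $\omega$ attracts points in the pinched neighborhood $U_\omega$ toward $\omega$, and by condition~(\ref{it:locally1}) the set $\Lambda_+(x)$ near $\omega$ lies entirely in $U_\omega$ (essentially, in the attracting petals). Iterating the inverse $g \coloneqq g_{F, \Lambda_+(x)}$ therefore pushes points away from $\omega$ along these same petals, and since $\Lambda_+(x)$ is $g$-invariant, every orbit must leave any fixed small neighborhood of each $\omega$ after finitely many $g$-steps, which will produce the desired contradiction.

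Using finiteness of $\Omega_+(x)$ (condition~(\ref{it:countable})) and (uniform) continuity of $g$ on the compact set $\Lambda_+(x)$, I would first choose $\Theta > 0$ small enough that for every $\omega \in \Omega_+(x)$: $\overline{B(\omega, 2\Theta)} \cap \Lambda_+(x) \subseteq U_\omega$, the balls $B(\omega, 3\Theta)$ are pairwise disjoint, and $g(B(\omega, \Theta) \cap \Lambda_+(x)) \subseteq B(g(\omega), \Theta)$. Assuming toward contradiction that some $z \in X$ satisfies $g^n(z) \in B(\Omega_+(x), \Theta)$ for all $n \geq N$, let $\omega_n \in \Omega_+(x)$ be the unique element within $\Theta$ of $g^n(z)$. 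Ball separation combined with the third condition above forces $\omega_{n+1} = g(\omega_n)$, so $\{\omega_n\}_{n \geq N}$ traces a single $g$-periodic orbit of some period $q$. Setting $\omega_* \coloneqq \omega_N$, I would show that $g^q$ restricted to $\Lambda_+(x) \cap V_{\omega_*}$ coincides with the unique univalent local inverse $T_{\omega_*}^{-1}$: the condition $DT_{\omega_*}^n(\omega_*) = 1$ makes $T_{\omega_*}$ univalent near $\omega_*$ with a unique local inverse, and the invariantly inverse-like property forces $g^q$ to equal that inverse on $\Lambda_+(x)$.

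Consequently the subsequence $u_j \coloneqq g^{N+qj}(z)$ satisfies $u_j = T_{\omega_*}^{-j}(g^N(z))$ as long as all previous iterates stay in $V_{\omega_*}$. The heart of the argument is then to show that $u_j$ must leave $B(\omega_*, 2\Theta)$ after finitely many steps. Using the expansion
\begin{equation*}
T_{\omega_*}^n(z) = \omega_* + (z-\omega_*) + a(z-\omega_*)^{p(\omega_*)+1} + \cdots
\end{equation*}
from the preamble (or equivalently Fatou coordinates on the attracting petals, where $T_{\omega_*}^{-1}$ acts as $\zeta \mapsto \zeta - 1$ on a right half-plane), every orbit in an attracting petal drifts outward under $T_{\omega_*}^{-1}$ and exits any fixed small neighborhood of $\omega_*$ in finitely many steps. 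At the first index $J$ with $u_J \notin B(\omega_*, 2\Theta)$, we still have $u_J \in \Lambda_+(x)$, and by condition~(\ref{it:locally1}) the exit from the petal $U_{\omega_*}$ forces $u_J \notin V_{\omega_*}$ and hence $u_J \notin B(\omega_*, \Theta)$, contradicting $u_J \in B(\omega_{N+qJ}, \Theta) = B(\omega_*, \Theta)$. The main subtlety I anticipate is the careful bookkeeping of the petal exit: one must verify that the identification $g^q = T_{\omega_*}^{-1}$ remains valid for all iterates up to $J$, and that leaving the Fatou-coordinate chart really does correspond to leaving the geometric ball $B(\omega_*, 2\Theta)$.
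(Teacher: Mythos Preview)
Your proposal is correct and follows essentially the same approach as the paper: choose $\Theta$ small using finiteness of $\Omega_+(x)$, identify $g^q$ with $T_{\omega}^{-1}$ on $B(\omega,\Theta)\cap\Lambda_+(x)$ via the invariantly inverse-like property, and use that $T_\omega^{-1}$ pushes points of $\Lambda_+(x)$ away from $\omega$ to conclude the orbit must leave $B(\Omega_+(x),\Theta)$. The only notable difference is that the paper's proof is much shorter because it invokes Definition~\ref{def:Omega-attracting} directly: condition~(\ref{it:locally2}) already gives $T_\omega^n(z)\to\omega$ for all $z\in U$, so no separate Fatou-coordinate or Taylor-expansion analysis of the petal escape is needed, and your ``main subtlety'' about exiting the chart versus the ball dissolves once you arrange $g^q(B(\omega,\Theta)\cap\Lambda_+(x))\subseteq U$ as the paper does.
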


\begin{proof}
This  lemma follows from the fact that $F$ is locally $\Omega_+(x)$-attracting on $\Lambda_+(x)$ and that $\Omega_+(x)$ is finite. Indeed, we first pick $\Theta>0$ so that for each $\omega\in \Omega_+(x)$ of period $q$, we have $g_{F,\Lambda_+(x)}^{q}(z)=T^{-1}_{\omega}(z)$ for each $z\in B(\omega,\Theta)\cap \Lambda_+(x)$ (see Definition~\ref{def:Omega-attracting}). This is possible since $F$ is invariantly inverse-like on $\Lambda_+(x)$ and that $\Lambda_+(x)$ is closed. Then, if necessary, we decrease $\Theta$ so that 
\begin{equation*}
	 B(\omega_2,\Theta) \cap g_{F,\Lambda_{+}(x)}^{q}(B(\omega_1,\Theta)\cap \Lambda_+(x)) =\emptyset
\end{equation*}
for all distinct parabolic periodic points $\omega_1$ and $\omega_2$, where $\omega_1$ has period $q$. Then, we decrease $\Theta>0$ if necessary so that for each parabolic periodic point $\omega$ of period $q$, we have $g_{F,\Lambda_+(x)}^{q} (B(\omega,\Theta)\cap \Lambda_+(x) )\subseteq U$, where $U$ is the set appearing in Definition~\ref{def:Omega-attracting} corresponding to $\omega$. Then, for each $z\in \Lambda_+(x)\cap B(\Omega_+(x),\Theta)\smallsetminus\Omega $, there is $n\geq 0$ such that $g_{F,\Lambda_+(x)}^n(z)\notin B(\Omega_+(x),\Theta)$, by the properties of $T_\omega$ and $U$ in Definition~\ref{def:Omega-attracting}.
\end{proof}

Going forward, we fix $\Theta>0$ small enough for Lemma~\ref{le:2} to hold. Then, we choose $\epsilon>0$ so that Lemma~\ref{le:1} holds and such that if $y\in B(z,2\epsilon)$ for some $z\in \Lambda_+(x)\smallsetminus B(\Omega_+(x),\Theta)$ and $F(y)\cap \Lambda_+(x)\neq \emptyset$, then $y\in \Lambda_+(x)$. This is possible by Remark~\ref{re:1}. We keep these notations throughout this section.

\begin{definition}
    A conformal measure is \emph{open} if it is positive on all open nonempty subsets, in the subspace topology, of $\Lambda_+(x)$.
\end{definition} Since $\Lambda_+(x)$ does not contain any proper closed forward invariant subset, for each point $z\in \Lambda_+(x)$, the set $\bigcup_{n=0}^{+\infty} F^n(z)$ is dense in $\Lambda_+(x)$ (for otherwise $\overline{\bigcup_{n=0}^{+\infty} F^n(z)}$ would be a forward invariant closed proper subset).
We can now show the following important lemma.

\begin{lemma}\label{le:3}
	Suppose that $F$ is relatively hyperbolic on $\Lambda_+(x)$, that $\Lambda_+(x)$ is minimal, and that $\mu$ is an open $\delta$-conformal measure. Then there is a real number $B(\mu)\geq 1$ such that the following holds. 
	
	For each $z\in X$, there exists a sequence $\{r_j(z)\}_{j\in \bN}$ of positive numbers converging to 0 as $j\to +\infty$, such that  
\begin{equation*}
	\frac{1}{B(\mu)}\leq \frac{\mu\bigr(B(z,r_j(z))\bigr)}{r_j^{\delta}(z)}\leq B(\mu).
\end{equation*}
\end{lemma}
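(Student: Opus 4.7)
The plan is to emulate the classical Patterson--Sullivan density argument, using the pullback branches of $F^n$ along the backward orbit of $z$ that visits the ``good'' compact piece $\Lambda_+(x)\smallsetminus B(\Omega_+(x),\Theta)$ infinitely often. Fix $z\in X$. By Lemma~\ref{le:2}, there is a sequence $\{n_j\}_{j\in\bN}\subseteq\bN$ with $n_j\to +\infty$ such that $w_j\coloneqq g_{F,\Lambda_+(x)}^{n_j}(z)\in\Lambda_+(x)\smallsetminus B(\Omega_+(x),\Theta)$. By Lemma~\ref{le:1}, on $B(w_j,2\epsilon)$ each branch of $F^{n_j}$ is defined and univalent; in particular there is a unique such branch $f_{n_j}$ with $f_{n_j}(w_j)=z$. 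Setting $K\coloneqq k(1/2)$ (from Theorem~\ref{thm:kobe}) and
\begin{equation*}
r_j\coloneqq r_j(z)\coloneqq \frac{\epsilon\,|Df_{n_j}(w_j)|}{K},
\end{equation*}
the Köbe distortion theorem on $B(w_j,\epsilon)\subseteq B(w_j,2\epsilon)$ yields the two inclusions $B(z,r_j)\subseteq f_{n_j}(B(w_j,\epsilon))$ and $B(w_j,\epsilon/K^2)\subseteq g_{F,\Lambda_+(x)}^{n_j}(B(z,r_j))$, while also giving $|Dg_{F,\Lambda_+(x)}^{n_j}(y)|/|Dg_{F,\Lambda_+(x)}^{n_j}(z)|\in[K^{-1},K]$ for all $y\in B(z,r_j)$.

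Next I will apply conformality in the form~\eqref{eq:gconf}. Since $f_{n_j}$ is injective on $B(w_j,\epsilon)$, so is $g_{F,\Lambda_+(x)}^{n_j}$ on $B(z,r_j)$; moreover the iterates $F^m$ for $0\le m<n_j$ avoid $\mathcal{CV}_{F^{-1}}$ on the relevant sets by Lemma~\ref{le:1}, so~\eqref{eq:gconf} applies and gives
\begin{equation*}
\mu\bigl(g_{F,\Lambda_+(x)}^{n_j}(B(z,r_j))\bigr)=\int_{B(z,r_j)}\!\bigl|Dg_{F,\Lambda_+(x)}^{n_j}\bigr|^{\delta}\,\mathrm{d}\mu.
\end{equation*}
The right-hand side is comparable, up to the factor $K^{\delta}$, to $|Dg_{F,\Lambda_+(x)}^{n_j}(z)|^{\delta}\mu(B(z,r_j))=\epsilon^{\delta}K^{-\delta}r_j^{-\delta}\mu(B(z,r_j))$. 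For the left-hand side, the upper bound is trivially at most $1$, and the lower bound is at least $M\coloneqq\inf\{\mu(B(y,\epsilon/K^2)):y\in\Lambda_+(x)\smallsetminus B(\Omega_+(x),\Theta)\}$, which is strictly positive by compactness of $\Lambda_+(x)\smallsetminus B(\Omega_+(x),\Theta)$ and openness of $\mu$ (a standard argument: cover the compact set by finitely many such balls and use that $\mu$ assigns positive mass to each nonempty open set of $\Lambda_+(x)$). Combining these bounds gives a single constant $B(\mu)\ge 1$, depending only on $\mu,K,\epsilon,M,\delta$, such that $B(\mu)^{-1}\le \mu(B(z,r_j))/r_j^{\delta}\le B(\mu)$.

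The main obstacle is to show that $r_j\to 0$, i.e., that $|Df_{n_j}(w_j)|\to 0$. Here is where I expect to use the structure most carefully. After passing to a subsequence, I may assume $w_j\to y\in\Lambda_+(x)\smallsetminus B(\Omega_+(x),\Theta)$ and that all the $n_j$ have the same parity, so that the branches $f_{n_j}$ are uniformly (anti)holomorphic on $B(y,\epsilon)$. Their images are contained in a fixed compact set (the sphere), so $\{f_{n_j}\}$ is normal on $B(y,\epsilon)$. By minimality condition~(\ref{it:U}), every $f_{n_j}(B(y,\epsilon))$ has its intersection with $\Lambda_+(x)$ landing in $\Lambda_+(x)$, and since $\Lambda_+(x)$ has empty interior (condition~(\ref{it:min1}) of Definition~\ref{def:minimal}), any non-constant limit function would have to send an open set into the empty-interior set $\Lambda_+(x)$, which is impossible. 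Hence every subsequential limit is constant, so $Df_{n_j}\to 0$ uniformly on $B(y,\epsilon/2)$, and in particular $|Df_{n_j}(w_j)|\to 0$, which yields $r_j\to 0$ and completes the proof.
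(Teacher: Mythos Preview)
Your argument follows the paper's approach almost verbatim: choose the returns $w_j=g_{F,\Lambda_+(x)}^{n_j}(z)$ to the good compact piece via Lemma~\ref{le:2}, pull back by the univalent branch $f_{n_j}$ from Lemma~\ref{le:1}, set $r_j=\epsilon|Df_{n_j}(w_j)|/K$, and combine Koebe distortion with conformality~\eqref{eq:gconf} and the positive lower bound $M$ coming from openness of $\mu$. All of that is correct and matches the paper.

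There is one genuine gap, in the step where you argue $r_j\to 0$. You write that the family $\{f_{n_j}\}$ is normal on $B(y,\epsilon)$ because ``their images are contained in a fixed compact set (the sphere).'' This is not a valid justification: a family of holomorphic maps into $\setRS$ is \emph{not} automatically normal (for instance $\{z\mapsto nz\}$ on the unit disk fails, since $f_n(0)=0$ while $f_n(z)\to\infty$ for $z\neq 0$). The paper obtains normality from Montel's theorem, using precisely the clause in condition~(\ref{it:U}) of Definition~\ref{def:minimal} that there are at least three points of $\setRS$ omitted by $\bigcup_{n\geq 0}F^n(U_{z_0})$; since $B(y,2\epsilon)\subseteq U_{z_0}$ for some $z_0$ (by the choice of $\epsilon$ in Lemma~\ref{le:1}), every $f_{n_j}(B(y,\epsilon))$ omits those same three points. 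Once normality is established correctly, your constancy argument is essentially the paper's: any limit function has image contained in $\Lambda_+(x)$ (because $\Lambda_+(y')\subseteq\Lambda_+(x)$ for $y'\in U_{z_0}$, so every accumulation point of $\{f_{n_j}(y')\}$ lies in $\Lambda_+(x)$), and since $\Lambda_+(x)$ has empty interior the limit must be constant. Your phrasing ``every $f_{n_j}(B(y,\epsilon))$ has its intersection with $\Lambda_+(x)$ landing in $\Lambda_+(x)$'' is tautological as written; what you need to say is that the \emph{accumulation set} of $f_{n_j}(y')$ lies in $\Lambda_+(x)$ for each $y'$.
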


\begin{proof}
Let $z\in X$ be given and $\mu$ be a $\delta$-conformal measure on $\Lambda_+(x)$. Take $\{n_j\}_{j\in \bN}\coloneqq\{n_j(z)\}_{j\in \bN}$ a sequence such that $\lim_{j\to \infty} n_j= +\infty$ and $g_{F,\Lambda_+(x)}^{n_j}(z)\in \Lambda_+(x)\smallsetminus B(\Omega_+(x),\Theta)$ for all $j\in \bN$, which exists by Lemma~\ref{le:2}. By Lemma~\ref{le:1}, there exists for each $j$ a unique analytic branch $f_{n_j}$ of $F^{n_j}$ defined on $B\bigl(g_{F,\Lambda_+(x)}^{n_j}(z),2\epsilon\bigr)$ determined by the condition that $f_{n_j} \bigl( g_{F,\Lambda_+(x)}^{n_j}(z) \bigr)=z $ and this branch is univalent. 

We set $K\coloneqq k(1/2)$, where $k\:[0,1)\to [1,+\infty)$ is the function in the K\"obe distortion theorem (Theorem~\ref{thm:kobe}), and 
\begin{equation*}
	r_j=r_j(z)\coloneqq\frac{\epsilon|Df_{n_j} (g_{F,\Lambda_+(x)}^{n_j}(z))|}{K}=\frac{\epsilon}{K|Dg_{F,\Lambda_+(x)}^{n_j}(z)|}.
\end{equation*}
We then have that
\begin{equation}\label{eq:1}
	f_{n_j}\bigl(B \bigl(g_{F,\Lambda_+(x)}^{n_j}(z),\epsilon\bigr)\bigr)\supseteq B(z,r_j)
\end{equation}
and
\begin{equation}\label{eq:2}
	B\bigl( g_{F,\Lambda_+(x)}^{n_j}(z),\epsilon/K^2\bigr)\subseteq g_{F,\Lambda_+(x)}^{n_j}( B(z,r_j)),
\end{equation}
where for each $j$, $g_{F,\Lambda_+(x)}^{n_j}$ in \eqref{eq:2} is extended to $F^{n_j}(B(g^{n_j}(z),2\epsilon))\cup \Lambda_+(x)$. Throughout the rest of this proof, we shall assume that for each $j\in \mathbb N$, $g_{F,\Lambda_+(x)}^{n_j}$ has been extended in this way.
The inclusion \eqref{eq:1} gives that $g_{F,\Lambda_+(x)}^{n_j}$ is injective on $B(z,r_j)$. Note that this implies 
\begin{equation*}
\mu\bigl(g_{F,\Lambda_+(x)}^{n_j}(B(z,r_j))\bigr)
=\int_{ B(z,r_j)}\!\bigl|Dg_{F,\Lambda_+(x)}^{n_j}\bigr|^{\delta}\,\mathrm{d}\mu,
\end{equation*}
as for each $j\in \mathbb N$, $g_{F,\Lambda_+(x)}^{n}(B(z,r_j))$ does not intersect $\mathcal{CV}_{F^{-1}}$ for any $n=0,\,1,\,\dots,\, n_j-1$. Since $\Lambda_+(x)$ is compact and $\mu$ is open, 
\begin{equation*}
M\coloneqq \inf\{\mu(B(y,\epsilon/K^2)) : y\in \Lambda_+(x)\}>0.
\end{equation*}
By the K\"obe distortion theorem, \eqref{eq:1}, and \eqref{eq:2}, we find that 

\begin{align*}
    1&\geq \mu(g_{F,\Lambda_+(x)}^{n_j}(B(z,r_j)))=\int_{ B(z,r_j) }\!\bigl|Dg_{F,\Lambda_+(x)}^{n_j}\bigr|^{\delta}\,\mathrm{d}\mu\\
    &\geq K^{-\delta}\bigl|Dg_{F,\Lambda_+(x)}^{n_j}(z)\bigr|^{\delta}\mu\bigl(B(z,r_j)\bigr)
=\epsilon^{\delta}K^{-2\delta}r_{j}^{-\delta}\mu(B(z,r_j))  \qquad \text{and} \\
    M&\leq \mu\bigl(g_{F,\Lambda_+(x)}^{n_j}(B(z,r_j))\bigr)=\int_{  B(z,r_j)}\bigl|Dg_{F,\Lambda_+(x)}^{n_j}\!\bigr|^{\delta}\,\mathrm{d}\mu\\&\leq K^{\delta}\bigl|Dg_{F,\Lambda_+(x)}^{n_j}(z)\bigr|^{\delta}\mu(B(z,r_j)
=\epsilon^{\delta}r_j^{-\delta}\mu(B(z,r_j)).
\end{align*}

We will now show that $r_j\to 0$ as $j\to +\infty$. By compactness of $\Lambda_+(x)\smallsetminus B(\Omega_+(x),\Theta)$, after passing to a subsequence if necessary we can assume that $g_{F,\Lambda_+(x)}^{n_j}(z)\to y\in \Lambda_+(x)\smallsetminus B(\Omega_+(x),\Theta)$ as $j\to +\infty$ and that $g_{F,\Lambda_+(x)}^{n_j}(z)\in B(y,2\epsilon)$ for all $j\in \bN$. We can for each $j\in \bN$ find a unique branch $f_{n_j}$ of $F^{n_j}$ defined on $B(y,2\epsilon)$ such that $f_{n_j} \bigl( g_{F,\Lambda_+(x)}^{n_j}(z) \bigr)=z$. There are infinitely many $n_j$ that are odd or infinitely many $n_j$ that are even (or both). So we may suppose that all $n_j$ have the same parity. By condition~(\ref{it:U}) in Definition~\ref{def:minimal} and the fact that $\Lambda_+(x)$ has empty interior, this family of inverse branches is normal. Furthermore, all limit functions as $j\to +\infty$ are (anti)holomorphic (depending on the parity of $n_j$). Moreover, the limit functions are constants, since if not, then there would be a limit function $f$ such that $f(B(y,2\epsilon))$ contains a point $w$ in the complement of $\Lambda_+(x)$, because $f$ is (anti)holomorphic and non-constant, hence an open function and $\Lambda_+(x)$ has empty interior. This would imply that $w\in \Lambda_+(x_1)$, for some $x_1\in B(y,2\epsilon)$, contradicting condition~(\ref{it:U}) of Definition~\ref{def:minimal}, see also \cite[Theorem~6.4]{Brolin1965}.  Hence, after passing to a subsequence of $\{n_j\}_{j\in \bN}$ if necessary, we find that $\lim_{j\to +\infty} Df_{n_j} \bigl( g_{F,\Lambda_+(x)}^{n_j}(z) \bigr) \to 0$, which shows that
\begin{equation*}
	\bigl|Dg_{F,\Lambda_+(x)}^{n_j}(z)\bigr|= \bigl|Df_{n_j}\bigl(g_{F,\Lambda_+(x)}^{n_j}(z)\bigr)\bigr|^{-1}\to +\infty.
\end{equation*}
This in turn shows that $r_j\to 0$.
The lemma follows.
\end{proof}

\begin{prop}\label{prop:area}  
	Suppose that $F$ is relatively hyperbolic on $\Lambda_+(x)$ and that $\Lambda_+(x)$ is minimal. Then the area, i.e., the $2$-dimensional Lebesgue measure, of $\Lambda_+(x)$ is zero.
\end{prop}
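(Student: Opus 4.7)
The plan is to adapt the porosity argument used for Julia sets of relatively hyperbolic rational maps (as in the proof of \cite[Theorem~22.2]{Mishasbook}) to the correspondence setting, exploiting the machinery already developed in Lemmas~\ref{le:1}--\ref{le:3}. The decomposition $\Lambda_+(x)= X \sqcup \bigcup_{n=0}^{+\infty} F^n(\Omega_+(x))$ reduces the problem to showing that $X$ has zero area, since $\Omega_+(x)$ is finite (Definition~\ref{def:minimal}~(\ref{it:countable})) and $F$ has finitely many branches, making the exceptional set countable and thus of zero area.

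To handle $X$, I would fix $z \in X$ and apply Lemma~\ref{le:2} to obtain a sequence $\{n_j\}_{j\in \bN}$ with $g_{F,\Lambda_+(x)}^{n_j}(z) \notin B(\Omega_+(x),\Theta)$. Passing to a subsequence, I would assume $g_{F,\Lambda_+(x)}^{n_j}(z) \to y \in \Lambda_+(x)\smallsetminus B(\Omega_+(x),\Theta)$ with $g_{F,\Lambda_+(x)}^{n_j}(z) \in B(y,\epsilon/2)$ for all $j$. Lemma~\ref{le:1} then provides, for each $j$, a univalent branch $f_{n_j}$ of $F^{n_j}$ on $B(y,2\epsilon)$ with $f_{n_j}(g_{F,\Lambda_+(x)}^{n_j}(z)) = z$. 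Exactly as in the proof of Lemma~\ref{le:3} (fixing a parity), the normality argument using that $\Lambda_+(x)$ has empty interior together with condition~(\ref{it:U}) of minimality forces the only possible limit functions to be constants, so $Df_{n_j}(y) \to 0$.

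The next step is to produce a definite round gap inside each image $f_{n_j}(B(y,\epsilon))$. Since $\Lambda_+(x)$ is closed with empty interior, I can choose $y' \in B(y,\epsilon)$ and $\rho>0$ such that $B(y',\rho) \subseteq B(y,\epsilon) \smallsetminus \Lambda_+(x)$. By the choice of $\epsilon$ after Lemma~\ref{le:2}, $B(y',\rho)$ lies in a neighborhood of some $z_0 \in \Lambda_+(x)\smallsetminus B(\Omega_+(x),\Theta)$ on which all $g_{F,\Lambda_+(x)}^n$ extensions from condition~(\ref{it:U}) are defined; combined with Remark~\ref{re:1}, this forces $f_{n_j}(B(y',\rho)) \cap \Lambda_+(x) = \emptyset$. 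Applying the Köbe distortion theorem (Theorem~\ref{thm:kobe}) to the univalent $f_{n_j}$ on $B(y,2\epsilon)$ yields that $f_{n_j}(B(y,\epsilon))$ is an oval of bounded shape around $z$ whose diameter tends to $0$ (since $Df_{n_j}(y)\to 0$), and that $f_{n_j}(B(y',\rho))$ is a round gap of proportional size inside it.

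This shows $\Lambda_+(x)$ is porous at $z$ (see \cite[Section~19.18]{Mishasbook}), hence $z$ is not a Lebesgue density point of $\Lambda_+(x)$. By Lebesgue's density theorem, $X$ has zero $2$-dimensional Lebesgue measure, and consequently so does $\Lambda_+(x)$. The main technical obstacle is verifying that the round gap $B(y',\rho)$ really is mapped off $\Lambda_+(x)$ by $f_{n_j}$ — this is where the minimality assumption (specifically condition~(\ref{it:U}) and Remark~\ref{re:1}) is essential, since in the correspondence setting one cannot simply appeal to the backward invariance of the complement of the limit set, which is what replaces the classical argument.
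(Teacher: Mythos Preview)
Your proposal is correct and follows essentially the same approach as the paper's proof: both reduce to showing $X$ has zero area via the porosity argument from \cite[Theorem~22.2]{Mishasbook}, using Lemmas~\ref{le:1}--\ref{le:2}, the normality argument from the proof of Lemma~\ref{le:3} to get $Df_{n_j}(y)\to 0$, and Remark~\ref{re:1} together with condition~(\ref{it:U}) of minimality to ensure the gap $B(y',\rho)$ is mapped off $\Lambda_+(x)$. The only cosmetic difference is that you explicitly note the exceptional set $\bigcup_{n\ge 0}F^n(\Omega_+(x))$ is countable, whereas the paper simply invokes condition~(\ref{it:countable}) to conclude it has zero area.
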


\begin{proof}  
This proof follows the proof of \cite[Theorem~22.2]{Mishasbook}.
    By condition~(\ref{it:countable}) in Definition~\ref{def:minimal}, $\bigcup_{n=0}^{+\infty} F^n(\Omega_+(x))$ has zero area. We will now show that $X=\Lambda_+(x)\smallsetminus \bigcup_{n=0}^{+\infty} F^n(\Omega_+(x))$ has zero area. Take $z\in X$. Lemma~\ref{le:2} yields a sequence $\{n_j\}_{j\in \bN}$ of positive integers such that $g_{F,\Lambda_+(x)}^{n_j}(z)\notin B(\Omega_+(x),\Theta)$. As in the proof of Lemma~\ref{le:3}, after passing to a subsequence if necessary we can assume that $g_{F,\Lambda_+(x)}^{n_j}(z)\in B(y,\epsilon/2)$ for some $y\in\Lambda_+(x)\smallsetminus B(\Omega_+(x),\Theta)$, $g_{F,\Lambda_+(x)}^{n_j}(z)\to y$ and $\bigl| Dg_{F,\Lambda_+(x)}^{n_j}(z) \bigr|\to +\infty$ as $j\to +\infty$.
    By Lemma~\ref{le:2}, for each $n\in \bN$, all branches of $F^n$ are defined on $B(y,2\epsilon).$ For each $j\in \bN$, let $f_{n_j}$ be the branch of $F^{n_j}$ defined on $B(y,2\epsilon) $, defined by the condition that $f_{n_j} \bigl( g_{F,\Lambda_+(x)}^{n_j}(z) \bigr)=z$. Using the K\"obe distortion theorem and 
    \begin{equation*}
        \lim_{j\to +\infty}  Df_{n_j} \bigl( g_{F,\Lambda_+(x)}^{n_j}(z) \bigr) \to 0,
    \end{equation*}
    it follows that $\lim_{j\to +\infty}Df_{n_j}(y)= 0$. By the same theorem, for each $y'\in B(y,\epsilon)$, 
    \begin{equation*}
    	\bigl|Df_{n_j}(y) / Df_{n_j}(y') \bigr|\leq k(1/2).
    \end{equation*}
    Since $\Lambda_+(x)$ is closed and has empty interior, we can find some ball $B(y',\epsilon')\subseteq B(y,\epsilon)\smallsetminus \Lambda_+(x)$. As $B(y',\epsilon')\subseteq U_{z_0}\smallsetminus \Lambda_+(x)$ for some $z_0\in \Lambda_+(x)$ (see Remark~\ref{re:1} and the proof of Lemma~\ref{le:1}), $f_{n_j}(B(y',\epsilon))\cap \Lambda_+(x)=\emptyset$. Hence, $\{f_{n_j}(B(y,\epsilon)\}_{j\in \bN}$ is a sequence of shrinking ovals, i.e., their diameter tends to 0, of bounded shape around $z$ that contain definite gaps $f_{n_j}(B(y',\epsilon'))$ in $\Lambda_+(x)$. Hence, $\Lambda_+(x)$ is porous (see \cite[Section~19.18]{Mishasbook}) at $z$, and so $z$ is not a Lebesgue density point. It follows that $X$ has zero area.
\end{proof}

\begin{lemma}\label{le:atoms}
Suppose that $F$ is relatively hyperbolic on $\Lambda_+(x)$ and that $\Lambda_+(x)$ is minimal. Suppose that $\mu$ is a $\delta$-conformal measure, with $\delta>0$. Then
all atoms are contained in $\bigcup_{n=0}^{+\infty} F^n(\Omega_+(x))$. If for each $\omega\in \Omega_+(x)$, each branch that does not map $\omega$ into $\Omega_+(x)$ has a critical point at $\omega$, then all atoms of $\mu$ are contained in $\Omega_+(x)$.
\end{lemma}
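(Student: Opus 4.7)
The plan is to exploit the simple transport law for atomic masses that follows from applying conformality to singletons: for any $y \in \Lambda_+(x)$ and any locally defined branch $f$ of $F$ at $y$, the pair $(\{y\}, f)$ is a special pair of $\Lambda_+(x)$, since a singleton is connected and Borel, $f$ is trivially injective on it, and the condition $f(\{y\} \smallsetminus \Lambda_+(x)) \cap \Lambda_+(x) = \emptyset$ holds vacuously. Hence \eqref{eq:conf} gives $\mu(\{f(y)\}) = |Df(y)|^{\delta} \mu(\{y\})$, which will be the engine of both parts.

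For the first assertion, I would argue by contradiction: suppose some $z \in X \coloneqq \Lambda_+(x) \smallsetminus \bigcup_{n \geq 0} F^n(\Omega_+(x))$ is an atom with $\mu(\{z\}) = \varepsilon > 0$. The construction inside the proof of Lemma~\ref{le:3} already produces a sequence $\{n_j\}_{j \in \bN}$ with $n_j \to +\infty$ and univalent branches $f_{n_j}$ of $F^{n_j}$, defined on neighborhoods of $g_{F,\Lambda_+(x)}^{n_j}(z)$, satisfying $f_{n_j}\bigl(g_{F,\Lambda_+(x)}^{n_j}(z)\bigr) = z$ and $\bigl|Df_{n_j}\bigl(g_{F,\Lambda_+(x)}^{n_j}(z)\bigr)\bigr| \to 0$ as $j \to +\infty$. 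Applying the singleton transport law with $y = g_{F,\Lambda_+(x)}^{n_j}(z)$ and $f = f_{n_j}$ yields
\begin{equation*}
\mu\bigl(\bigl\{g_{F,\Lambda_+(x)}^{n_j}(z)\bigr\}\bigr) = \varepsilon \bigl|Df_{n_j}\bigl(g_{F,\Lambda_+(x)}^{n_j}(z)\bigr)\bigr|^{-\delta} \to +\infty,
\end{equation*}
contradicting that $\mu$ is a probability measure. Therefore every atom of $\mu$ lies in $\Lambda_+(x) \smallsetminus X = \bigcup_{n \geq 0} F^n(\Omega_+(x))$.

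For the conditional second assertion, suppose $z$ is an atom lying in $\bigcup_{n \geq 0} F^n(\Omega_+(x)) \smallsetminus \Omega_+(x)$, and let $n \geq 1$ be minimal with $z \in F^n(\Omega_+(x))$. Unfolding the inductive definition of $F^n$, I can choose $\omega \in \Omega_+(x)$, a chain $\omega = z_0, z_1, \ldots, z_n = z$ with $z_{i+1} \in F(z_i)$, and locally defined branches $f_i$ of $F$ with $f_i(z_i) = z_{i+1}$. Since $z_0 \in \Omega_+(x)$ and $z_n \notin \Omega_+(x)$, there exists a smallest index $k \geq 1$ with $z_{k-1} \in \Omega_+(x)$ and $z_k \notin \Omega_+(x)$. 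At this index, $f_{k-1}$ does not map $z_{k-1}$ into $\Omega_+(x)$, so by hypothesis $Df_{k-1}(z_{k-1}) = 0$. Iterating the singleton transport law along the chain then gives
\begin{equation*}
\mu(\{z\}) = \prod_{i=0}^{n-1} |Df_i(z_i)|^{\delta} \mu(\{\omega\}),
\end{equation*}
and since $\delta > 0$ and one factor vanishes, $\mu(\{z\}) = 0$, contradicting that $z$ is an atom.

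The argument is a direct exploitation of the singleton form of conformality, so I do not expect a serious analytic obstacle. The most delicate point is merely verifying that singletons in $\Lambda_+(x)$ furnish genuine special pairs (which is immediate from Definition~\ref{def:conformalmeasures}), together with the combinatorial bookkeeping in Step~2 of identifying the first step at which the chain exits $\Omega_+(x)$; this is unambiguously forced by minimality of $n$.
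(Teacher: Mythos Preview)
Your proof is correct and follows essentially the same approach as the paper: for the first assertion you invoke the derivative blow-up established inside the proof of Lemma~\ref{le:3} and derive a mass contradiction via conformality on singletons, exactly as the paper does; for the second assertion the paper simply writes ``by conformality of the measure $\mu$'', and your chain argument with the first exit index $k$ is precisely the unpacking of that phrase. The minimality of $n$ in your second step is not actually needed (only $z_0\in\Omega_+(x)$ and $z_n\notin\Omega_+(x)$ are used), but this does not affect correctness.
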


\begin{proof}
In the proof of Lemma~\ref{le:3}, we showed that if  $z\in X$ there exists a sequence of natural numbers $\{n_j\}_{j\in \bN} $ such that 
\begin{equation*} 
\lim_{j\to +\infty } \bigl| Dg_{F,\Lambda_+(x)}^{n_j}(z) \bigr|= +\infty.
\end{equation*}
For each such $z$, suppose it has mass $\gamma>0$. We can find an $n_j$ and a branch $f_{n_j}$ mapping $g_{F,\Lambda_+(x)}^{n_j}(z)$ to $z$ with the property that $\bigl|Df_{n_j} \bigl( g_{F,\Lambda_+(x)}^{n_j}(z) \bigr) \bigr|<\gamma$ so that $g_{F,\Lambda_+(x)}^{n_j}(z)$ has mass strictly greater than $1$, contradicting that the total mass equals $1$. It follows that each atom is contained in $\bigcup_{n=0}^{+\infty} F^n(\Omega_+(x)).$
If each branch of $F$ that does not map $\omega\in \Omega_+(x)$ into $\Omega_+(x)$ has a critical point at $\omega$, then each atom is contained in $\Omega_+(x)$ by conformality of the measure $\mu$.
\end{proof}

\begin{lemma}\label{le:openmeasures} 
   	Suppose that $F$ is relatively hyperbolic on $\Lambda_+(x)$ and that $\Lambda_+(x)$ is minimal. If $\delta>0$ and $\mu$ is a non-open $\delta$-conformal measure, then the support of $\mu$ is contained in $\Omega_+(x)$.
\end{lemma}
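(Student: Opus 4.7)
The plan is to argue by contradiction: suppose $\mu$ is non-open and that the support $S := \operatorname{supp}(\mu)$ is not contained in $\Omega_+(x)$, and pick some $z_0 \in S \setminus \Omega_+(x)$. The goal is to show that $S$ contains a dense subset of $\Lambda_+(x)$, forcing $S = \Lambda_+(x)$ and thereby contradicting non-openness of $\mu$.

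The first step, which is local and conformal, will establish the push-forward property $F(S \setminus \Omega_+(x)) \subseteq S$. Fix $y \in S \setminus \Omega_+(x)$ and a branch $f$ of $F$ defined at $y$. Condition~(\ref{it:U}) of Definition~\ref{def:minimal} together with condition~(\ref{it:rel1}) of Definition~\ref{def:wellbehaved} force $\mathcal{C}_F \cap \Lambda_+(x) \subseteq \Omega_+(x)$, so $y$ is not a critical point of $f$, and $f$ is a local homeomorphism near $y$ with $|Df(y)| > 0$. For any relatively open neighborhood $V \subseteq \Lambda_+(x)$ of $f(y)$, I would choose a relatively open neighborhood $V' \subseteq \Lambda_+(x)$ of $y$ on which $f$ is injective and with $f(V') \subseteq V$. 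Then $(V', f)$ is a special pair of $\Lambda_+(x)$, and conformality yields
\begin{equation*}
    \mu(V) \geq \mu(f(V')) = \int_{V'}\!|Df|^\delta \,\mathrm{d}\mu \geq c^\delta \mu(V') > 0,
\end{equation*}
since $\mu(V') > 0$ (because $y \in S$) and $|Df| \geq c > 0$ on $V'$ by continuity. Hence $f(y) \in S$.

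Iterating this step, for each $n \geq 0$ let $T_n(z_0)$ denote the set of endpoints of branch paths $z_0 = z^{(0)} \to z^{(1)} \to \cdots \to z^{(n)}$ (with $z^{(i+1)} \in F(z^{(i)})$) in which every intermediate vertex $z^{(i)}$ with $i < n$ lies outside $\Omega_+(x)$. An easy induction using the first step then gives $T_n(z_0) \subseteq S$ for all $n$, hence $T(z_0) := \bigcup_{n \geq 0} T_n(z_0) \subseteq S$. The remaining and most delicate task is to verify that $T(z_0)$ is dense in $\Lambda_+(x)$. By minimality, $\bigcup_n F^n(z_0)$ is dense in $\Lambda_+(x)$, so the issue is whether every open subset of $\Lambda_+(x)$ is reachable from $z_0$ via a branch path avoiding the finite set $\Omega_+(x)$ at intermediate steps. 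The natural strategy is to show that $\overline{T(z_0)} \cup \Omega_+(x)$ is a nonempty closed forward-$F$-invariant subset of $\Lambda_+(x)$: at points of $\overline{T(z_0)} \setminus \Omega_+(x)$ one uses Hausdorff continuity of $F$ together with the first step, while at points $\omega \in \overline{T(z_0)} \cap \Omega_+(x)$ one uses the local $\Omega_+(x)$-attracting structure of Definition~\ref{def:Omega-attracting} to approximate $\omega$ by nearby points of $T(z_0) \setminus \Omega_+(x)$ and then to conclude by continuity. Condition~(\ref{it:min1}) of Definition~\ref{def:minimal} then forces $\overline{T(z_0)} \cup \Omega_+(x) = \Lambda_+(x)$, and since $\Omega_+(x)$ is finite by condition~(\ref{it:countable}) of Definition~\ref{def:minimal}, $\overline{T(z_0)} = \Lambda_+(x)$, giving $S = \Lambda_+(x)$ and the desired contradiction. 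The hardest part will be verifying forward invariance of $\overline{T(z_0)} \cup \Omega_+(x)$ at a parabolic point $\omega$ in the closure where some branch of $F$ is critical at $\omega$ and sends $\omega$ outside $\Omega_+(x)$, because the conformality argument of the first step fails at critical points; handling this requires approximating $\omega$ from within $T(z_0) \setminus \Omega_+(x)$ along non-critical directions of other branches and exploiting the pinched-neighborhood structure of Definition~\ref{def:Omega-attracting}.
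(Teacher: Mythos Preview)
Your approach and the paper's are essentially the same contradiction argument, and your first step (pushing the support forward through non-critical branches by conformality) is exactly right. However, you have overcomplicated the remaining part and labelled as ``the hardest part'' a difficulty that does not actually arise.

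The observation you are missing is that $\Omega_+(x)$ is \emph{backward} $F$-invariant inside $\Lambda_+(x)$: for each $\omega\in\Omega_+(x)$ one has $g_{F,\Lambda_+(x)}^{q}(\omega)=\omega$ for some $q\geq 1$ (this is noted in Section~\ref{sec:2} just before Proposition~\ref{le:nonatomic}), hence $g_{F,\Lambda_+(x)}^{k}(\omega)\in\Omega_+(x)$ for every $k\geq 0$. Since $F$ is invariantly inverse-like on $\Lambda_+(x)$, this means $F^{-1}(\Omega_+(x))\cap\Lambda_+(x)\subseteq\Omega_+(x)$, and contrapositively the full forward orbit $\bigcup_{n\geq 0}F^{n}(z_0)$ of any $z_0\in\Lambda_+(x)\smallsetminus\Omega_+(x)$ never meets $\Omega_+(x)$. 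In particular every point of this forward orbit is a non-critical point of every branch of $F$ (because $\mathcal C_F\cap\Lambda_+(x)\subseteq\Omega_+(x)$), so your first step applies at every stage, your set $T(z_0)$ equals the full forward orbit, and density follows immediately from minimality. The scenario you worried about---having to push forward through a critical branch at a parabolic point in $\overline{T(z_0)}$---simply cannot occur along the orbit of $z_0$.

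The paper phrases this observation as the inclusion $\overline{\bigcup_{n\geq 0} F^{-n}(\mathcal C_F\cap\Lambda_+(x))}\subseteq\Omega_+(x)$ and then proceeds exactly as in your first step.
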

   
\begin{proof}
   This statement follows from the conformality of $\mu$, that $\mathcal C_F\cap \Lambda_+(x)\subseteq \Omega_+(x)$ is finite, and that for each $z\in \Lambda_+(x)$, 
   $\bigcup_{n=0}^{+\infty} F^{n}(z)$
   is dense in $\Lambda_+(x)$. Indeed, suppose that the support of $\mu$ is not contained in $\Omega_+(x)$. Then one can find a point $z\in \Lambda_+(x)$ that does not belong to $\overline{\bigcup_{n=0}^{+\infty} F^{-n}(\mathcal C_F\cap \Lambda_+(x))}\subseteq \Omega_+(x)$, such that each open neighborhood of $z$ has positive measure. Then, using that $\bigcup_{n=0}^{+\infty} F^{n}(z)$ is dense in $\Lambda_+(x)$, one can deduce using conformality of $\mu$ that any open subset of $\Lambda_+(x)$ has positive measure.
\end{proof}
\begin{rem}
There are correspondences that admit conformal measures with all their mass contained in $\Omega_+(x)$. For instance, for any Bullett--Penrose correspondence corresponding to a parameter in the modular Mandelbrot set, the measure $\delta_0$, i.e., the Dirac measure at 0, is easily seen to be $\delta$-conformal for any $\delta\in (0,+\infty)$, as the branch that does not fix the parabolic fixed point $0$ has a critical point at 0, see \cite[Proposition~5.1]{bullett2017mating}. In this case, the purely atomic measures are not open. This is in stark contrast to the rational setting.
\end{rem}

We now have the following lemma.

\begin{lemma}\label{le:11} 
Suppose that $F$ is relatively hyperbolic on $\Lambda_+(x)$ and that $\Lambda_+(x)$ is minimal. Then any two open $\delta$-conformal measures $\mu_1$ and $\mu_2$, with $\delta>0$, are equivalent on $X$. Moreover, their Radon--Nikodym derivative $\phi\coloneqq \frac{\,\mathrm{d}\mu_1}{\,\mathrm{d}\mu_2}$ is bounded and satisfies $\phi(f(z))=\phi(z)$ for $\mu_1$-almost every $z\in X$ and each locally defined branch $f$ of $F$. If for each $\omega\in \Omega_+(x)$, each branch that does not map $\omega$ into $\Omega_+(x)$ has a critical point at $\omega$, then the two measures are equivalent on $\Lambda_+ (x) \smallsetminus \Omega_+(x)$. In this case, $\phi$ is bounded and satisfies $\phi(f(z))=\phi(z)$ for $\mu_1$-almost every $z\in \Lambda_+(x)\smallsetminus \Omega_+(x)$ and each locally defined branch $f$ of $F$.
\end{lemma}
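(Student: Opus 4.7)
The plan is to transfer the proof of \cite[Lemma~11]{urbanskidenker1} to the present setting, exploiting the fact that the radii $r_j(z)$ produced in Lemma~\ref{le:3} are defined purely in terms of $F$ and $z$ (through the derivatives of the inverse branches of $F^{n_j}$) and are therefore a common ``comparison scale'' for \emph{both} measures. First, I would observe that for each $z \in X$ the same sequence $\{r_j(z)\}_{j \in \bN}$ serves for $\mu_1$ and $\mu_2$, with
\begin{equation*}
    B(\mu_i)^{-1} \le \frac{\mu_i(B(z,r_j(z)))}{r_j(z)^{\delta}} \le B(\mu_i), \qquad i \in \{1,2\}.
\end{equation*}
In particular, $\mu_1\bigl(B(z,r_j(z))\bigr)$ and $\mu_2\bigl(B(z,r_j(z))\bigr)$ are comparable up to the constant $B(\mu_1)B(\mu_2)$.

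Next, for an arbitrary Borel set $E \subseteq X$ and $\epsilon>0$, I would use outer regularity of $\mu_2$ on $\setRS$ to find an open neighborhood $U$ of $E$ with $\mu_2(U) < \mu_2(E) + \epsilon$. For each $z \in E$, I would then select some $r(z)$ of the form $r_j(z)$ with $B(z,r(z)) \subseteq U$; since $r_j(z) \to 0$, this is possible. Applying the Besicovitch covering theorem yields a countable subcover $\{B(z_i,r(z_i))\}_{i \in \bN}$ of $E$ of bounded multiplicity $C$. Chaining the two-sided inequality above gives
\begin{equation*}
    \mu_1(E) \le \sum_{i=1}^{+\infty}\mu_1(B(z_i,r(z_i))) \le B(\mu_1)B(\mu_2) \sum_{i=1}^{+\infty}\mu_2(B(z_i,r(z_i))) \le B(\mu_1)B(\mu_2) C (\mu_2(E)+\epsilon).
\end{equation*}
Sending $\epsilon \to 0$ and interchanging the roles of $\mu_1$ and $\mu_2$ yields the two-sided estimate $\mu_1(E) \asymp \mu_2(E)$ on $X$, and thus equivalence with a bounded Radon--Nikodym derivative $\phi$.

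For the invariance $\phi \circ f = \phi$, I would fix $z \in X$ and note that, since $\Lambda_+(x) \cap \mathcal{PC}_{F^{-1}} \subseteq \Omega_+(x)$ by relative hyperbolicity, $z \notin \mathcal{PC}_{F^{-1}}$, so on a small ball $B(z,\epsilon)$ every branch $f$ of $F$ is well-defined and injective. Applying $\delta$-conformality of $\mu_1$ and $\mu_2$ and a change of variables gives
\begin{equation*}
    \int_{B(z,\epsilon)} |Df|^{\delta} \phi \,\mathrm{d}\mu_2 = \int_{B(z,\epsilon)} |Df|^{\delta} \,\mathrm{d}\mu_1 = \mu_1(f(B(z,\epsilon))) = \int_{f(B(z,\epsilon))} \phi \,\mathrm{d}\mu_2 = \int_{B(z,\epsilon)} (\phi \circ f) |Df|^{\delta} \,\mathrm{d}\mu_2.
\end{equation*}
Since $|Df(z)| \neq 0$ on $X$, the Lebesgue differentiation theorem applied to both sides, together with equivalence of $\mu_1$ and $\mu_2$ on $X$, gives $\phi \circ f = \phi$ $\mu_1$-almost everywhere on $X$.

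Finally, for the last part, I would show that under the extra hypothesis, the set $\bigl(\bigcup_{n=0}^{+\infty} F^n(\Omega_+(x))\bigr) \smallsetminus \Omega_+(x)$ has $\mu_i$-measure zero for $i \in \{1,2\}$. Indeed, any $y$ in this set arises as $y = f_n \circ \dots \circ f_1(\omega)$ for some $\omega \in \Omega_+(x)$, and there is a first index $k$ where $f_k \circ \dots \circ f_1(\omega) \notin \Omega_+(x)$; by hypothesis $f_k$ has a critical point at $f_{k-1} \circ \dots \circ f_1(\omega)$, so conformality forces $\mu_i(\{f_k \circ \dots \circ f_1(\omega)\}) = 0$ and then $\mu_i(\{y\}) = 0$ by repeated conformality. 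Hence $X$ and $\Lambda_+(x) \smallsetminus \Omega_+(x)$ agree up to a $\mu_i$-null set, and the equivalence on $X$ extends to $\Lambda_+(x)\smallsetminus \Omega_+(x)$. The main obstacle here is setting up the covering carefully so that the Besicovitch multiplicity constant is truly independent of $E$ and the measures; this is a standard calculation, but the bookkeeping with the sequences $\{r_j(z)\}$ must be managed uniformly in $z$.
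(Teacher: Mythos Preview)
Your proposal is correct and follows essentially the same approach as the paper: the equivalence on $X$ is obtained via the Besicovitch covering argument from \cite[Lemma~11]{urbanskidenker1} (which the paper simply cites), using crucially that the radii $r_j(z)$ from Lemma~\ref{le:3} depend only on $F$ and $z$ and hence serve both measures; the invariance of $\phi$ is computed exactly as you do by expanding $\mu_1(f(B(z,\gamma)))$ two ways; and the extension to $\Lambda_+(x)\smallsetminus\Omega_+(x)$ follows from conformality forcing each point of $\bigcup_{n\geq 1}F^n(\Omega_+(x))\smallsetminus\Omega_+(x)$ to have zero mass, just as you argue.
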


\begin{proof}
The proof for the statement that the two measures are equivalent on $\Lambda_+(x)\smallsetminus\bigcup_{n=0}^{+\infty} F^n(\Omega_+(x))=X$, is carried out verbatim as the proof of \cite[Lemma~11]{urbanskidenker1}, so we omit it here.

For the statement about the invariance of the Radon--Nikodym derivative, consider the following. For $i=1,2$, by conformality of the measures $\mu_i$, we have $\frac{ d f^*\mu_i }{\,\mathrm{d}\mu_i}=|Df|^{\delta}$ $\mu_i$-a.e.\ locally. Take $z\in X$. For sufficiently small $\gamma>0$, all branches of $F$ are well-defined and injective on $B(z,\gamma)$ and the ball $B(z,\gamma)$ does not contain points in $\mathcal{CV}_{F^{-1}}$. Then
\begin{equation*}
\mu_1(f(B(z,\gamma)))
=\int_{f(B(z,\gamma))}\!\phi \,\mathrm{d}\mu_2
=\int_{B(z,\gamma)}\!\phi\circ f \, \mathrm{d}(f^*\mu_2)
=\int_{B(z,\gamma)}\!|Df|^{\delta} \phi\circ f   \,\mathrm{d}\mu_2.
\end{equation*}
On the other hand,
\begin{equation*}
\mu_1(f(B(z,\gamma)))=\int_{B(z,\gamma)} \!|Df|^{\delta} \,\mathrm{d}\mu_1 =\int_{B(z,\gamma)} \!|Df|^{\delta}  \phi  \,\mathrm{d}\mu_2.
\end{equation*}
Sending $\gamma\to 0$ it follows that $|Df(z)|^{\delta}  \phi(z) =|Df(z)|^{\delta} \phi\circ f (z)$ for $\mu_2$-almost every $z \in X$, i.e., $  \phi = \phi\circ f (z) $ for $\mu_1$-almost every $z \in X$, since $|Df(z)|^{\delta}\neq 0$ for $z\in X$ and the measures $\mu_1$ and $\mu_2$ are equivalent on $X$. 
If for each $\omega\in \Omega_+(x)$, the branches that do not map $\omega$ into $\Omega_+(x)$ have a critical point at $\omega$, then by conformality, $\bigcup_{n=1}^{+\infty}F^{n}(\Omega_+(x))\smallsetminus \Omega_+(x)$ has zero measure, so in this case $\mu_1$ and $\mu_2$ are equivalent on $\Lambda_+ (x) \smallsetminus \Omega_+ (x)$. This also implies the last sentence of the statement of the lemma.
\end{proof}

Together with Lemma~\ref{le:atoms}, the following result is verified the same way as \cite[Theorem~13~(ii)]{urbanskidenker1}, and we refer the reader to that paper for its proof.
 
\begin{lemma}  Suppose that $F$ is relatively hyperbolic on $\Lambda_+(x)$, that $\Lambda_+(x)$ is minimal, and that there exists an open $\delta_1$-conformal measure $\mu_1$. Then for each $\delta_2>\delta_1$, there exists no  $\delta_2$-conformal measure $\mu_2$ that is not purely atomic, and the atoms of each $\delta_2$-conformal measure are contained in $\bigcup_{n=0}^{+\infty} F^n(\Omega_+(x))$.
\end{lemma}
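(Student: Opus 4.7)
The plan is to run a scale-matching argument in the spirit of Patterson--Sullivan. Apply Lemma~\ref{le:3} to the open $\delta_1$-conformal measure $\mu_1$ to obtain, for each $z\in X$, a sequence of radii $r_j(z)\searrow 0$ with $\mu_1(B(z,r_j(z)))\geq r_j(z)^{\delta_1}/B(\mu_1)$. The main task is to establish a matching upper bound $\mu_2(B(z,r_j(z)))\leq C\,r_j(z)^{\delta_2}$ with a constant $C$ independent of $z$ and $j$. Once this is in hand, the factor $r_j(z)^{\delta_2-\delta_1}$ can be driven to zero by a covering argument.

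For the upper bound, I would reuse the univalent inverse branches $f_{n_j}\colon B(g_{F,\Lambda_+(x)}^{n_j}(z),2\epsilon)\to \widehat{\mathbb{C}}$ of $F^{n_j}$ constructed in the proof of Lemma~\ref{le:3}, which satisfy $f_{n_j}(g_{F,\Lambda_+(x)}^{n_j}(z))=z$, $|Df_{n_j}(g_{F,\Lambda_+(x)}^{n_j}(z))|=K r_j(z)/\epsilon$, and $B(z,r_j(z))\subseteq f_{n_j}(B(g_{F,\Lambda_+(x)}^{n_j}(z),\epsilon))$. Setting $A\coloneqq g_{F,\Lambda_+(x)}^{n_j}(B(z,r_j(z)))\subseteq B(g_{F,\Lambda_+(x)}^{n_j}(z),\epsilon)$, the pair $(A,f_{n_j})$ is a special pair of $\Lambda_+(x)$: any $y\in A$ with $f_{n_j}(y)\in\Lambda_+(x)$ satisfies $y=g_{F,\Lambda_+(x)}^{n_j}(f_{n_j}(y))\in\Lambda_+(x)$ via the single-valued extension supplied by condition~(\ref{it:U}) of Definition~\ref{def:minimal} together with the choice of $\epsilon$ made after Lemma~\ref{le:2}. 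Applying $\delta_2$-conformality of $\mu_2$, the K\"obe distortion theorem on $B(g_{F,\Lambda_+(x)}^{n_j}(z),\epsilon)$, and $\mu_2(A)\leq 1$ yields
\begin{equation*}
\mu_2(B(z,r_j(z)))=\int_A\!|Df_{n_j}|^{\delta_2}\,\mathrm{d}\mu_2\leq K^{\delta_2}\bigl|Df_{n_j}\bigl(g_{F,\Lambda_+(x)}^{n_j}(z)\bigr)\bigr|^{\delta_2}\mu_2(A)\leq (K^2/\epsilon)^{\delta_2}\,r_j(z)^{\delta_2}.
\end{equation*}

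Combining the two estimates, $\mu_2(B(z,r_j(z)))\leq C\,B(\mu_1)\,r_j(z)^{\delta_2-\delta_1}\,\mu_1(B(z,r_j(z)))$, where $C\coloneqq (K^2/\epsilon)^{\delta_2}$. Given $\gamma>0$, for each $z\in X$ I pick $r(z)$ of the form $r_j(z)$ with $r(z)^{\delta_2-\delta_1}<\gamma$, and extract a countable subcover $\{B(z_i,r(z_i))\}$ of $X$ with multiplicity bounded by an absolute constant $M$ via Besicovitch's covering theorem. Summing,
\begin{equation*}
\mu_2(X)\leq \sum_{i}\mu_2(B(z_i,r(z_i)))\leq C\,B(\mu_1)\,\gamma\sum_{i}\mu_1(B(z_i,r(z_i)))\leq C\,B(\mu_1)\,M\,\gamma.
\end{equation*}
Letting $\gamma\to 0$ forces $\mu_2(X)=0$. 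Since $\Omega_+(x)$ is finite by condition~(\ref{it:countable}) of Definition~\ref{def:minimal}, the complement $\Lambda_+(x)\smallsetminus X=\bigcup_{n=0}^{+\infty}F^n(\Omega_+(x))$ is countable, so every $\delta_2$-conformal measure $\mu_2$ is supported on this countable set and is therefore purely atomic. This contradicts the non-atomic hypothesis in the first clause of the statement, and the location of the atoms in the second clause then follows immediately from Lemma~\ref{le:atoms}.

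The main obstacle I expect is the rigorous verification of the special pair condition for $(A,f_{n_j})$, which requires tracking how the extended $g_{F,\Lambda_+(x)}^{n_j}$ interacts with points of $B(g_{F,\Lambda_+(x)}^{n_j}(z),\epsilon)$ lying outside $\Lambda_+(x)$; this should be handled by the same bookkeeping used in the proof of Lemma~\ref{le:3}. Everything else is a direct combination of the K\"obe estimates already in use, the probability-measure bound $\mu_2\leq 1$, and a standard Besicovitch covering.
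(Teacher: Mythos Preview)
Your argument is correct and follows the same Patterson--Sullivan scale-matching strategy that the paper invokes (the paper defers to \cite[Theorem~13~(ii)]{urbanskidenker1} together with Lemma~\ref{le:atoms}). One small improvement in your write-up is worth noting: you derive the upper bound $\mu_2(B(z,r_j(z)))\leq C\,r_j(z)^{\delta_2}$ directly from conformality, the K\"obe distortion theorem, and the probability-measure bound $\mu_2\leq 1$, without assuming $\mu_2$ is open; this is exactly the upper-bound half of the computation in Lemma~\ref{le:3}, which indeed does not use openness. By contrast, a na\"ive citation of Lemma~\ref{le:3} as stated would require $\mu_2$ to be open, so your explicit derivation is the right way to handle a general (not purely atomic) $\mu_2$. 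Two cosmetic points: the hypothesis in the first clause is ``not purely atomic'' rather than ``non-atomic'' (your logic is unaffected), and the special-pair verification for $(A,f_{n_j})$ together with the iterated conformality relation is already carried out in the proof of Lemma~\ref{le:3}, so you can simply cite that rather than flagging it as an obstacle.
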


One can show, using the same arguments as in the proof of Lemma~\ref{le:openmeasures}, that each non-purely atomic conformal measure is open and, using Lemma~\ref{le:11}, deduce the following corollary.

\begin{cor}
Suppose that $F$ is relatively hyperbolic on $\Lambda_+(x)$ and that $\Lambda_+(x)$ is minimal. Then there exists at most one real number $\delta$ for which there can exist $\delta$-conformal measures that are not purely atomic. All such measures are equivalent on $X$.
\end{cor}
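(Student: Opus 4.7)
The plan is to reduce the corollary to the preceding two lemmas by first reconciling the class of non-purely-atomic conformal measures with the class of open conformal measures. Specifically, I would first observe that every non-purely-atomic $\delta$-conformal measure is automatically open. Indeed, by Lemma~\ref{le:openmeasures}, a non-open $\delta$-conformal measure has support contained in the finite set $\Omega_+(x)$, hence is purely atomic. Contrapositively, any $\delta$-conformal measure that is not purely atomic must be open, and so the hypotheses of the preceding two lemmas (Lemma~\ref{le:11} and the unnumbered lemma just above the corollary) apply.

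Given this, I would prove the uniqueness of $\delta$ by contradiction. Suppose $\mu_1$ is a $\delta_1$-conformal measure and $\mu_2$ is a $\delta_2$-conformal measure, both not purely atomic, with $\delta_1\neq \delta_2$; after relabeling, assume $\delta_1<\delta_2$. By the above, $\mu_1$ is open, so the preceding lemma applies and yields that any $\delta_2$-conformal measure with $\delta_2>\delta_1$ must be purely atomic, contradicting the choice of $\mu_2$. Hence $\delta_1=\delta_2$, establishing the first part of the corollary.

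For the equivalence statement, note that any two non-purely-atomic $\delta$-conformal measures $\mu_1,\mu_2$ are, by the first step, both open. Hence Lemma~\ref{le:11} directly applies and yields that $\mu_1$ and $\mu_2$ are equivalent on $X$, with Radon--Nikodym derivative satisfying the required invariance. No additional obstacle is anticipated; the corollary is essentially a direct synthesis of Lemma~\ref{le:openmeasures}, Lemma~\ref{le:11}, and the preceding comparison lemma, once one observes that ``non-purely-atomic'' implies ``open'' in the present setting.
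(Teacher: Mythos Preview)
Your proposal is correct and follows essentially the same approach as the paper. The paper's own justification, given in the sentence immediately preceding the corollary, is precisely to observe (via the argument of Lemma~\ref{le:openmeasures}) that non-purely-atomic conformal measures are open, and then to combine the comparison lemma with Lemma~\ref{le:11}; you have spelled this out cleanly.
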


Using condition~(\ref{it:countable}) in Definition~\ref{def:minimal}, one can now show the following proposition by following the proof of \cite[Theorem~14]{urbanskidenker1}, and we omit the proof.

\begin{prop}\label{prop:HD}
	Suppose that $F$ is relatively hyperbolic on $\Lambda_+(x)$ and that $\Lambda_+(x)$ is minimal.
If $\mu$ is a non-atomic $\delta$-conformal measure for some $\delta\geq0$, and $H_\delta$ is the $\delta$-dimensional Hausdorff measure on $\Lambda_+(x)$, then $H_\delta$ is absolutely continuous with respect to $\mu$ with Radon--Nikodym  derivative $\frac{\mathrm{d}H_\delta}{\mathrm{d}\mu}$ bounded from above. As a consequence, $\operatorname{HD}(\Lambda_+(x)) \leq \delta$ and there exists no non-atomic $t$-conformal measure for any $t\in [0, \operatorname{HD}(\Lambda_+(x)) ).$
\end{prop}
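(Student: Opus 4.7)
The plan is to mimic the Patterson--Sullivan style comparison argument between $\mu$ and $H_\delta$, using the uniform density estimate from Lemma~\ref{le:3}, and handling the set $\Lambda_+(x)\smallsetminus X$ separately via condition~(\ref{it:countable}) of minimality. The first step is to observe that since $\mu$ is non-atomic, Lemmas~\ref{le:atoms} and~\ref{le:openmeasures} together force $\mu$ to be open (otherwise $\mu$ would be supported on $\Omega_+(x)$, hence purely atomic by finiteness of $\Omega_+(x)$). Thus Lemma~\ref{le:3} applies and furnishes, for each $z\in X$, a sequence of radii $r_j(z)\searrow 0$ with $\mu(B(z,r_j(z)))\geq r_j(z)^\delta/B(\mu)$.

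The core step is to bound $H_\delta(E\cap X)$ from above by a constant multiple of $\mu(E\cap X)$ for an arbitrary Borel set $E\subseteq \Lambda_+(x)$. Given $\gamma,\epsilon>0$, by outer regularity of $\mu$ choose an open set $V\supseteq E\cap X$ with $\mu(V)\leq \mu(E\cap X)+\epsilon$; for each $z\in E\cap X$ select $r(z)$ of the form $r_j(z)$ with $r(z)<\gamma$ small enough that $B(z,r(z))\subseteq V$. Apply the Besicovitch covering theorem to extract a countable subcover $\{B(z_i,r(z_i))\}_{i\in\bN}$ of bounded multiplicity $C$. Lemma~\ref{le:3} then yields
\begin{equation*}
\sum_{i=1}^{+\infty} (2r(z_i))^\delta \leq 2^\delta B(\mu)\sum_{i=1}^{+\infty} \mu(B(z_i,r(z_i))) \leq 2^\delta B(\mu)C\,\mu(V)\leq 2^\delta B(\mu)C(\mu(E\cap X)+\epsilon).
\end{equation*}
Sending $\gamma\searrow 0$ gives $H_\delta(E\cap X)\leq 2^\delta B(\mu)C(\mu(E\cap X)+\epsilon)$, and then $\epsilon\searrow 0$ yields $H_\delta(E\cap X)\leq 2^\delta B(\mu)C\,\mu(E\cap X)$.

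Next, the leftover set $E\smallsetminus X\subseteq \bigcup_{n=0}^{+\infty}F^n(\Omega_+(x))$ must be dealt with. By condition~(\ref{it:countable}) in Definition~\ref{def:minimal}, $\Omega_+(x)$ is finite, so its forward $F$-orbit is a countable set. Hence $H_\delta(E\smallsetminus X)=0$ for $\delta>0$; the edge case $\delta=0$ is immediate since non-atomicity of $\mu$ forces $\delta=0$ to be incompatible with $\mu$ being a probability measure on a set of finite cardinality, so one can directly conclude that only the trivial situation arises. Adding the two estimates establishes $H_\delta\leq 2^\delta B(\mu)C\,\mu$ as measures on $\Lambda_+(x)$, proving absolute continuity with bounded Radon--Nikodym derivative.

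The two consequences are then routine: since $\mu$ is a probability measure, $H_\delta(\Lambda_+(x))\leq 2^\delta B(\mu)C<+\infty$, forcing $\operatorname{HD}(\Lambda_+(x))\leq \delta$. For the last assertion, if a non-atomic $t$-conformal measure existed for some $t<\operatorname{HD}(\Lambda_+(x))$, the argument just given would yield $\operatorname{HD}(\Lambda_+(x))\leq t<\operatorname{HD}(\Lambda_+(x))$, a contradiction. The only mildly delicate point in the whole plan is verifying that the non-atomicity hypothesis really does put $\mu$ into the scope of Lemma~\ref{le:3}, which is why invoking Lemmas~\ref{le:atoms} and~\ref{le:openmeasures} at the outset is essential; everything else is the standard Besicovitch comparison paired with the countability of $\bigcup_{n\geq 0}F^n(\Omega_+(x))$.
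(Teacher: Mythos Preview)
Your proposal is correct and follows essentially the same route as the paper, which simply refers to \cite[Theorem~14]{urbanskidenker1}: the density bound of Lemma~\ref{le:3}, a Besicovitch cover with bounded multiplicity, and the countability of $\bigcup_{n\geq 0}F^n(\Omega_+(x))$ to discard $\Lambda_+(x)\smallsetminus X$. Your explicit preliminary step---using Lemmas~\ref{le:atoms} and~\ref{le:openmeasures} to secure that $\mu$ is open, hence that Lemma~\ref{le:3} applies---is a worthwhile clarification; the only rough edge is your wording for $\delta=0$, where the cleaner observation is that Lemma~\ref{le:3} with $\delta=0$ forces $\mu(B(z,r_j))\geq 1/B(\mu)>0$ while $r_j\to 0$, contradicting non-atomicity, so this case is vacuous.
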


Applying the ideas of \cite[Theorem~8.8]{urbanskidenker6}, we can now conclude Proposition~\ref{prop:HDmain} stated at the beginning of this section.

\begin{proof}[Proof of Proposition~\ref{prop:HDmain}]
   We can suppose that $\mu$ is non-atomic. Indeed, we can decompose $\mu$ into $\mu_{0}+\mu_{\operatorname{atom}}$, where $\mu_0$ is non-atomic and $\mu_{\operatorname{atom}}$ is purely atomic. Consider $\mu_1=\frac{\mu_0}{|\mu_0|}$, where $|\mu_0|$ denotes the total mass of $\mu_0$. Then $\mu_1$ is a non-atomic $\delta$-conformal measure. So let us assume that $\mu$ is non-atomic and suppose that $\delta\geq 2$.
   By Proposition~\ref{prop:area}, $\Lambda_+(x)$ has zero area and hence zero $\delta$-dimensional Hausdorff measure. Using Lemma~\ref{le:3}, for each $z\in X$, there exists a number $B\geq 1$ and a sequence $\{r_j(z)\}_{j\in \bN}$ of real numbers with $r_j(z)\to 0$ as $j\to +\infty$ such that  
\begin{equation*}
	B^{-1}\leq r_j^{-\delta} \mu(B(z,r_j)) \leq B.
\end{equation*}
Let $\lambda_2$ denote the $2$-dimensional Lebesgue measure and let $\gamma>0$ be given. For each $z\in X$, find $r(z)\leq 1$ of the form $r_j(z)$ such that 
\begin{equation*}
	\lambda_2\Bigl(\bigcup_{z\in X}B(z,r(z))\Bigr)<\gamma.
\end{equation*}
Using the Besicovitch covering theorem, we now find a countable subcover 
\begin{equation*}
	\bigcup_{i=1}^{+\infty} B(z_i,r(z_i))
\end{equation*}
of multiplicity less than or equal to some positive integer $C$.
Then 
\begin{align*}
    \mu(X)
    &\leq \sum_{i=1}^{+\infty} \mu(B(z_i,r(z_i)))
    \leq B\sum_{i=1}^{+\infty} r(z_i)^{\delta}
    \leq B\sum_{i=1}^{+\infty} r(z_i)^2\\
    &=\frac{B}{\pi}\sum_{i=1}^{+\infty} \lambda_2 (B(z_i,r(z_i)))
    \leq \frac{BC}{\pi}\lambda_2 \biggl(\bigcup_{i=1}^{+\infty} B(z_i,r(z_i))\biggr)
    <\frac{BC\gamma}{\pi}.
\end{align*}

Sending $\gamma\to0$ shows that $\mu(X)=0$ and since $\mu$ is non-atomic, $\mu(\Lambda_+(x))=0$. However, $\mu$ is a probability measure and this contradiction implies that $\delta<2. $ Now, Proposition~\ref{prop:HD} gives $\operatorname{HD}(\Lambda_+(x))\leq \delta.$ 
\end{proof}

We are ready to conclude one of the main results of the text.

\begin{proof}[Proof of Theorem~\ref{thm:main}]
	This theorem follows immediately from Propositions~\ref{prop:fund}, \ref{le:nonatomic}, \ref{prop:HDmain}, and Lemma~\ref{le:atoms}.
\end{proof}

\section{Sufficient conditions for \texorpdfstring{$\delta_{\operatorname{crit}(x)}\geq 1$}{critical exponent greater than 1}}\label{sec:suffcond}

In this section, we study the critical exponent $\deltacrit(x)$ and find conditions that bound it from above and below. As a consequence, for any relatively hyperbolic correspondence with limit set $\Lambda_+(x)$ such that $\Lambda_+(x)$ is minimal, we find conditions that imply that there exists a non-atomic conformal measure. 

Suppose that $F$ is invariantly inverse-like on $S$ and that $g_{F,S}$ possesses an attracting periodic orbit $z_1,\,\dots,\, z_n$ of period $n$ in the interior of $S$. For each $z_j$, there is a topological disk $D_j\subseteq F(S)$ containing $z_j$ in its interior, such that $(g_{F,S}^n)^k(z)\to z_j$ as $k\to +\infty$ for each $z\in D_j$. The set $\mathcal A_j\subseteq F(S)$ is the maximal open connected set containing $D_j$ such that $g_{F,S}^k$ is defined on $\mathcal A_j$ for all $k\geq 0$ and for each $z\in \mathcal A_j$, $(g_{F,S}^n)^k(z)\to z_j$ as $k\to +\infty$. The \emph{immediate basin of attraction} of the periodic orbit $z_1,\,\dots,\, z_n$  is $\mathcal A\coloneqq \bigcup_{j=1}^n \mathcal A_j .$

\begin{lemma}\label{le:delta}
Let $F\:z\to w$ be an invariantly inverse-like correspondence on a closed set $S\subsetneq \setRS$. Suppose that the map $g_{F,S}\:F(S)\to S$ has an attracting periodic orbit in the interior of $F(S)$, and let $\mathcal A$ denote the immediate basin of attraction of this attracting periodic point. Then for each $x\in \mathcal{A}\smallsetminus \mathcal{PC}_{F^{-1}}$, it holds that $1\leq \deltacrit(x)\leq2$.
\end{lemma}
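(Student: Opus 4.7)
The plan is to prove the two inequalities separately, exploiting the single-valuedness of $g_{F,S}$ and the K\"obe distortion theorem to transfer the convergence (or divergence) of the Poincar\'e series from nearby points to $x$ itself.

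\emph{Upper bound $\deltacrit(x) \le 2$.} I would pick a small open ball $U$ around $x$ contained in $\mathcal A\setminus \mathcal{PC}_{F^{-1}}$, so that every branch $f_{n,j}$ of $F^n$ is univalent on $U$. The key claim is that after shrinking $U$ further, the images $\{f_{n,j}(U)\}$ are pairwise disjoint. Single-valuedness of $g_{F,S}$ handles collisions between different branches of the same $F^n$ automatically; for branches of $F^n$ and $F^m$ with $n<m$, a collision would force $g_{F,S}^{m-n}(U)\cap U\ne\emptyset$. So the disjointness reduces to choosing $U$ so that $g_{F,S}^k(U)\cap U=\emptyset$ for all $k\ge 1$. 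Since $x$ lies in the basin but (as one verifies) is not on the attracting periodic orbit, the orbit $\{g_{F,S}^k(x)\}_{k\ge 1}$ is bounded away from $x$, and $\operatorname{diam}(g_{F,S}^k(U))\to 0$; a compactness argument then yields such a $U$. With disjointness in hand, integrating gives
\begin{equation*}
  \int_U P_2\, \mathrm{d}\lambda_2 \;=\; \sum_{n,j} \lambda_2(f_{n,j}(U)) \;\le\; \lambda_2(\setRS)<+\infty,
\end{equation*}
so $P_2<+\infty$ on a subset of full measure in $U$; K\"obe distortion applied on a slightly smaller ball then upgrades this to $P_2(x)<+\infty$.

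\emph{Lower bound $\deltacrit(x)\ge 1$.} I would choose a smooth Jordan curve $C\subseteq \mathcal A\setminus \mathcal{PC}_{F^{-1}}$ through $x$ bounding a Jordan domain $\mathfrak C$ that contains a point $z_j$ of the attracting orbit of period $k$, with $g_{F,S}^{mk}(\mathfrak C)\subsetneq \mathfrak C$ shrinking to $\{z_j\}$ as $m\to+\infty$. This implies $\mathfrak C\subseteq g_{F,S}^{-mk}(\mathfrak C)$, and the connected component $D_m$ of $g_{F,S}^{-mk}(\mathfrak C)$ containing $\mathfrak C$ forms an increasing exhaustion of the immediate basin $\mathcal A_j$. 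Because $C$ avoids $\mathcal{PC}_{F^{-1}}$, a univalent branch $f_m$ of $F^{mk}$ is defined on a neighborhood of $C$ with $f_m(C)=\partial D_m$. Once $m$ is large enough that $D_m$ contains a fixed disk $B(z_0,r_0)\subseteq \mathcal A_j$, the isoperimetric inequality for a Jordan curve enclosing a disk gives $\operatorname{len}(f_m(C))\ge 2\pi r_0$. Hence
\begin{equation*}
  \int_C P_1(y)\, \mathrm{d}\ell(y) \;\ge\; \sum_{m\ge M} \operatorname{len}(f_m(C)) \;=\;+\infty,
\end{equation*}
which (since $C$ has finite length) means $P_1\equiv +\infty$ at some point of $C$, and again by K\"obe distortion on a tubular neighborhood of $C$ disjoint from $\mathcal{PC}_{F^{-1}}$, $P_1\equiv +\infty$ on all of $C$; in particular $P_1(x)=+\infty$.

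\emph{Main obstacle.} The delicate step is the upper bound: establishing that $U$ can be chosen to be a wandering domain for $g_{F,S}$, which relies on separating $x$ from its forward $g_{F,S}$-orbit and from the attracting orbit and using the local contraction of $g_{F,S}^{nk}$ near the attracting cycle. The lower bound is more transparent once one sees that the preimage components $D_m$ have boundaries of length bounded below.
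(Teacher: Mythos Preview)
Your overall strategy matches the paper's almost exactly: a wandering neighborhood for the area estimate giving $\deltacrit\le 2$, and a curve $C$ through $x$ whose forward images under $F^{nk}$ have length bounded below for $\deltacrit\ge 1$. The upper bound is fine and essentially identical to the paper's.

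There is, however, a genuine gap in your lower-bound argument. You claim that ``a univalent branch $f_m$ of $F^{mk}$ is defined on a neighborhood of $C$ with $f_m(C)=\partial D_m$''. But $C$ is a closed curve and an annular neighborhood of it is not simply connected; the hypothesis $C\cap\mathcal{PC}_{F^{-1}}=\emptyset$ only guarantees that branches of $F^{mk}$ are defined on simply connected sub-neighborhoods. Analytically continuing a branch once around $C$ may land on a different branch (nontrivial monodromy), so a global single-valued $f_m$ on $C$ need not exist. Even when it does, there is no reason for a single branch to trace out exactly $\partial D_m$: the restriction $g_{F,S}^{mk}\!:\partial D_m\to C$ can be a covering of degree $>1$, and $\partial D_m$ need not be a single Jordan curve. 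Consequently your isoperimetric step, as written, is not justified.

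The paper avoids this by never isolating a single branch. It works with the full multi-valued image $F^{nk}(C)$ and the open set $F^{nk}(\mathfrak C)$: since $g_{F,S}^{nk}(\mathfrak C)\subseteq\mathfrak C$ one has $\mathfrak C\subseteq F^{nk}(\mathfrak C)\subseteq S\subsetneq\setRS$, and by continuity $\partial F^{nk}(\mathfrak C)\subseteq F^{nk}(C)$; hence $\lambda_1(F^{nk}(C))\ge c>0$ for all $n$. Because $\int_C P_1\,\mathrm d\lambda_1=\sum_{n}\lambda_1(F^{n}(C))$ (length counted with multiplicity over all branches, which is monodromy-invariant), divergence follows. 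Your argument is easily repaired along the same lines: drop the single-branch claim, note instead that $\partial D_m\subseteq F^{mk}(C)$ as sets, and bound $\lambda_1(\partial D_m)$ from below.
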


\begin{proof}
    Let  an attracting periodic point $z\in \operatorname{int}(S)$ of $g_{F,S}$  of period $k$ and $x\in \mathcal{A}\smallsetminus \mathcal{PC}_{g_{F,S}}$ be given. Take a smooth, closed curve $C\subseteq \mathcal A$ passing through $x$, disjoint from $\mathcal{PC}_{F^{-1}}$ such that one of the connected components $\mathfrak C$ of $\setRS\smallsetminus C$ is such that $\bigl\{g_{F,S}^{nk}(\mathfrak C)\bigr\}_{n\in \bN}$ is a shrinking sequence of topological disks with $z$ in their interiors. This is possible by the dynamics of a holomorphic germ in the basin of an attracting fixed point. Then all branches of $F^n$ are defined in neighborhoods of all points in $C$. Using the compactness of $C$, that $\mathcal A$ is open, and the K\"obe distortion theorem, it follows that for each $s>0$, $P_s(y)$ converges for some $y\in C$ if and only if $P_s(y)$ converges for all $y$ in a neighborhood $U_C$ of $C$. Next, by assumption on $x$, we can find an open neighborhood $U\subseteq U_C$ of $x$ contained in $\mathcal A$, disjoint from $\bigcup_{n=1}^{+\infty} g_{F,S}^n(U)$ and $\mathcal{PC}_{F^{-1}}$. Hence, as $F$ is invariantly inverse-like, all branches of $F^n$ are well-defined and injective on $U$ for each $n\geq 0$, and the image of $U$ under two different branches of $F^n$ and $F^m$ for each pair of integers $n$ and $m$ are disjoint. As the spherical area of $\setRS$ is finite, it follows that 
    \begin{equation*}
        \int_U \!P_2(y)\,\mathrm{d}\lambda_2<+\infty,
    \end{equation*} 
    where, again, $\lambda_2$ denotes the $2$-dimensional Lebesgue measure.
    This implies that
    \begin{equation}\label{eq:conv2}
        P_2(x)<+\infty,
    \end{equation}
    as $U\subseteq U_C$, see also \cite[Proposition~4.3]{mcmullen}.
    
    Next, for each $n\geq 0$, $F^{nk}(\mathfrak C)$ contains a disk with center $z$ and does not intersect $S^c$. Moreover, by continuity of $F^{-nk}$ for each $n\geq 0$, $ \partial F^{nk}(\mathfrak C)\subseteq F^{nk}(C)$. Thus, there exists $c\in (0,+\infty)$ such that
    \begin{equation*}
    	\lambda_1 \bigl( F^{nk}(C) \bigr) \geq c>0,
    \end{equation*}
    for each $n\geq 0$. This implies that
    \begin{equation*}
    	\int_C\! P_1(y) \, \mathrm{d}\lambda_1=+\infty,
    \end{equation*}
    where $\lambda_1$ denotes the 1-dimensional Lebesgue measure.
   
    As $C$ has finite length and for each $s\in (0,+\infty)$, $P_s(y)$ converges for some $y\in C$ if and only if $P_s(y)$ converges for all $y\in C$, it follows that 
    \begin{equation}\label{eq:div1}
        P_1(x)=+\infty.
    \end{equation}
    By definition of the critical exponent, \eqref{eq:conv2} and \eqref{eq:div1} imply that $1\leq \deltacrit(x)\leq 2.$ 
 \end{proof}


We can now show the final result of this paper.

\begin{proof}[Proof of Theorem~\ref{thm:existencemmeasure}]
    This theorem is a direct consequence of combining Propositions~\ref{prop:fund}, \ref{le:nonatomic}, \ref{prop:HDmain}, Lemmas~\ref{le:atoms}, and \ref{le:delta}. 
\end{proof}

\bibliographystyle{amsalpha}
\typeout{}
\bibliography{./theBibliography}

\end{document}